\newcommand{\Z}{\mathbb{Z}}						
\newcommand{\R}{\mathbb{R}}						
\newcommand{\C}{\mathbb{C}}						
\renewcommand{\S}{\mathbb{S}}					
\newcommand{\D}{\mathbb{D}}						
\newcommand{\B}{\mathbb{B}}
\newcommand{\eps}{\varepsilon}					
\newcommand{\dd}								
	{\mathop{}\!\mathrm{d}}						
\newcommand{\ddn}[1]							
	{\mathop{}\!\mathrm{d^{#1}}}
\newcommand{\abs}[1]							
	{\left| #1 \right|}
\newcommand{\smallabs}[1]						
	{\lvert #1 \rvert}	
\newcommand{\norm}[1]							
	{\left\lVert #1 \right\rVert}	
\newcommand{\smallnorm}[1]						
	{\lVert #1 \rVert}						
\newcommand{\ip}[2]								
	{\left< #1 , #2 \right>}
\DeclareMathOperator{\vol}{vol}					
\DeclareMathOperator{\dist}{dist}					
\let\Re\relax									
\let\Im\relax
\DeclareMathOperator{\Re}{Re}
\DeclareMathOperator{\Im}{Im}					
\newcommand{\loc}{\mathrm{loc}}					
\newcommand{\cM}{\mathcal{M}}
\def\Xint#1{\mathchoice
	{\XXint\displaystyle\textstyle{#1}}%
	{\XXint\textstyle\scriptstyle{#1}}%
	{\XXint\scriptstyle\scriptscriptstyle{#1}}%
	{\XXint\scriptscriptstyle\scriptscriptstyle{#1}}%
	\!\int}
\def\XXint#1#2#3{{\setbox0=\hbox{$#1{#2#3}{\int}$}
		\vcenter{\hbox{$#2#3$}}\kern-.5\wd0}}
\def\dashint{\Xint-}
\newtheorem{thm}{Theorem}[section]{\bf}{\it}
\newtheorem{lemma}[thm]{Lemma}
\newtheorem{prob}[thm]{Problem}
\newtheorem{conj}[thm]{Conjecture}
\newenvironment{customthm}[1]
	{\innercustomthm}
	{\endinnercustomthm}
\theoremstyle{definition}
\theoremstyle{remark}
\numberwithin{equation}{section}
\begin{document}
	
\title{Mappings of generalized finite distortion  and  continuity}

\author{Anna Dole\v zalov\'a} 
\address{Department of Mathematical Analysis, Charles University,  Sokolovsk\'a 83, 186 00 Prague 8, Czech Republic}
\email{dolezalova@karlin.mff.cuni.cz}

\author{Ilmari Kangasniemi}
\address{Department of Mathematics, Syracuse University, Syracuse,
NY 13244, USA }
\email{kikangas@syr.edu}

\author{Jani Onninen}
\address{Department of Mathematics, Syracuse University, Syracuse,
NY 13244, USA and  Department of Mathematics and Statistics, P.O.Box 35 (MaD) FI-40014 University of Jyv\"askyl\"a, Finland
}
\email{jkonnine@syr.edu}
	
\thanks{A.\ Dole\v zalov\'a is a Ph.D.\ student in the University Centre for Mathematical Modelling, Applied Analysis and Computational Mathematics (Math MAC) and was supported by the grant GA CR P201/21-01976S and by the project Grant Schemes at CU, reg.\ no.\ CZ.02.2.69/0.0/0.0/19 073/0016935. J.\ Onninen was supported by the NSF grant DMS-2154943.}
\subjclass[2020]{Primary 30C65; Secondary 35R45}
\keywords{Mappings of finite distortion, Quasiregular mappings,  continuity, differential inclusions, quasiregular values}
	
\begin{abstract}
	We study continuity properties of Sobolev mappings $f \in W_{\loc}^{1,n} (\Omega, \R^n)$, $n \ge 2$, that satisfy the following generalized finite distortion inequality
	\[\abs{Df(x)}^n \leq K(x) J_f(x) + \Sigma (x)\] 
	for almost every $x \in \R^n$. Here $K \colon \Omega \to  [1, \infty)$ and  $\Sigma \colon \Omega \to  [0, \infty)$ are measurable functions.  Note that when $\Sigma \equiv 0$, we recover the class of mappings of finite distortion, which are always continuous. The continuity of arbitrary solutions, however, turns out to be an intricate question. We fully solve the continuity problem in the case of bounded distortion $K \in L^\infty (\Omega)$, where a sharp condition for continuity is that $\Sigma$ is in the Zygmund space $\Sigma \log^\mu(e + \Sigma) \in L^1_\loc(\Omega)$ for some $\mu > n-1$. We also show that one can slightly relax the boundedness assumption on $K$ to an exponential class $\exp(\lambda K) \in L^1_\loc(\Omega)$ with $\lambda > n+1$, and still obtain continuous solutions when $\Sigma \log^\mu(e + \Sigma) \in L^1_\loc(\Omega)$ with $\mu > \lambda$. On the other hand, for all $p, q \in [1, \infty]$ with $p^{-1} + q^{-1} = 1$, we construct a discontinuous solution with $K \in L^p_\loc(\Omega)$ and $\Sigma/K \in L^q_\loc(\Omega)$, including an example with $\Sigma \in L^\infty_\loc(\Omega)$ and $K \in L^1_\loc(\Omega)$.
\end{abstract}
	
\maketitle
	
\section{Introduction}
Let $\Omega$ be a connected, open subset of $\R^n$ with $n \ge 2$. Recall that a \emph{differential inclusion} is a condition requiring that, for almost every (a.e.) $x \in \Omega$, a weakly differentiable mapping  $f \in W^{1,1}_\loc(\Omega, \R^m)$ satisfies $Df(x) \in F(x, f(x))$ where $F$ is a function from $\Omega \times \R^m$ to subsets of $m \times n$-matrices. Here, we are searching for differential inclusions under which a Sobolev map $f\in W_{\loc}^{1,n} (\Omega, \R^n)$  has a  continuous representative. More specifically, we are interested in  ones which are motivated by the Geometric Function Theory, with connections to mathematical models of Nonlinear Elasticity. 

This leads us to consider the differential inclusions given by the set functions
\begin{equation}
\cM_n (K, \Sigma) \colon x \mapsto \{ A \in \mathbb R^{n \times n} \colon \abs{A}^n \leq K(x) \det A + \Sigma(x)\} \,  ,
\end{equation}
where  $K\colon \Omega \to [1, \infty)$ and $\Sigma \colon \Omega \to [0, \infty)$ are  given measurable functions. Here and in what follows, $\abs{A}$ stands for  the operator norm of matrix $A\in \mathbb R^{n\times n}$;  that is, $\abs{A} = \sup \{\abs{A h} : h \in \S^{n-1} \}$. We also use the shorthand $G \in \cM_n(K, \Sigma)$ if $G \colon \Omega \to \R^{n \times n}$ satisfies $G(x) \in \cM_n(K, \Sigma)(x)$ for a.e.\ $x \in \Omega$. Now, our continuity problem reads as follows.

\begin{prob}\label{prob:continuity problem}
	Find a necessary and sufficient condition on the functions $K$ and $\Sigma$ which guarantees that if $f\in W_{\loc}^{1,n} (\Omega, \R^n)$ with $Df \in \mathcal M_n (K, \Sigma)$, then $f$ has a continuous representative.
\end{prob}

A necessary condition for Problem \ref{prob:continuity problem} is that $\Sigma$ must at least to lie in the Zygmund space $L\log^\mu L_{\loc} (\Omega)$ for some $\mu > n-1$:  that is, 
\begin{equation}\label{eq:min_assumption_sigma}
	\Sigma \log^\mu (e+ \Sigma) \in L^1_{\loc} (\Omega) \qquad \mu > n-1 \, . 
\end{equation} 
Indeed, the mapping $f \colon \B^n(0, 1) \to \R^n$ defined by 
\begin{equation}\label{eq:log_log_log}
	f(x) = \left(\log \log \log \frac{e^e}{\abs{x}}, 0, \dots , 0\right)
\end{equation}
has $ \det Df \equiv 0$ and $\abs{Df}^n \log^{n-1}(e + \abs{Df}^n) \in L^1 (\B^n(0, 1))$, but $\lim\limits_{x \to 0} \abs{f(x)} = \infty$. 

\subsection{Results for bounded $K$.}

When $\Sigma \equiv 0$ and $K\in L^\infty(\Omega)$, $\cM_n(K, 0)$ recovers the \emph{mappings of bounded distortion}, also
known as \emph{quasiregular} mappings; a mapping  $f\colon \Omega \to \R^n$ is \emph{$K$-quasiregular} for $K \in [1, \infty)$ if $f\in W_{\loc}^{1,n} (\Omega, \R^n)$ with $\abs{Df(x)}^n \leq K \det Df(x)$ for a.e.\ $x \in \Omega$. Homeomorphic $K$-quasiregular mappings are  called \emph{$K$-quasiconformal}. The first breakthrough in the theory of mappings of bounded distortion was Reshetnyak's theorem on  H\"older continuity: a $K$-quasiregular mapping is locally $1/K$-H\"older continuous, see~\cite{Reshetnyak_continuity} and~\cite[Corollary II.1]{Reshetnyak-book}. Such H\"older continuity properties of quasiconformal mappings in the plane were earlier established by Morrey~\cite{Morrey}.

Other differential inclusions of the type $\cM_n(K, \Sigma)$ with $K \in L^\infty(\Omega)$ have also arisen naturally in different contexts. For instance, Simon~\cite{Simon_Holder} developed a local regularity theory for minimal graphs of functions $u \colon \R^2 \to \R$ such that the Gauss map of the graph of $u$ satisfies
\begin{equation}\label{eq:qr_gene1def}
	\abs{Df(x)}^2 \leq K \det Df(x) + \Sigma
\end{equation}
where $1\le K <\infty$ and $0\le \Sigma < \infty$ are given constants. Recall that the \emph{Gauss map} takes the points of a surface $S \subset \R^n$ to the unit normal vector in $\S^{n-1}$. In particular, the Gauss map automatically satisfies \eqref{eq:qr_gene1def} when $u$ is a solution of any equation of
mean curvature type~\cite[(1.9) (ii)]{Simon_Holder}.  Similar results for simply connected surfaces embedded in $\R^3$ are due to Schoen and Simon~\cite{Schoen-Simon}. The main result in~\cite{Simon_Holder} enabling the regularity theory states that a local $W^{1,2}$-solution to~\eqref{eq:qr_gene1def} between embedded 2D-surfaces is H\"older continuous; see also~\cite[Ch. 12]{Gilbarg-Trudinger_Book}. 

This H\"older continuity result has been generalized for unbounded $\Sigma$ as well. Precisely, if $K\in L^\infty$ and $\Sigma \in L^{p}_{\loc} (\Omega)$ for some $p>1$, then a mapping $f\in W_{\loc}^{1,n} (\Omega, \R^n)$ with $Df\in \mathcal M_n (K,\Sigma)$ has a H\"older continuous representative. For the planar case, see the proof of \cite[Theorem 8.5.1]{Astala-Iwaniec-Martin_Book} by Astala, Iwaniec and Martin, and for the more general case $n \geq 2$, see the argument in \cite[Section 3]{Kangasniemi-Onninen_Heterogeneous} by Kangasniemi and Onninen. While the planar argument of Astala, Iwaniec and Martin relies on complex potential theory, the higher dimensional proof is closer to that of Simon \cite{Simon_Holder}, mimicking the lines of reasoning by Morrey~\cite{Morrey} and Reshetnyak~\cite{Reshetnyak_continuity} in the case of mappings of bounded distortion.

However, despite yielding sharp results on the $L^p$-scale, the Morrey-type decay argument used in \cite[Section 3]{Kangasniemi-Onninen_Heterogeneous} does not give a sharp result if one moves to the Zygmund space setting $\Sigma \in L\log^\mu L_{\loc} (\Omega)$. In particular, the decay argument shows continuity when $\mu > n$, but the optimal regularity assumption for $\Sigma$ is in fact $\mu > n-1$, precisely the minimal necessary condition stated in \eqref{eq:min_assumption_sigma}. This optimal regularity theorem is our first main result.

\begin{thm}\label{thm:bounded_K}
	Suppose that $f \in W_{\loc}^{1,n} (\Omega, \R^n)$ and $Df(x) \in \mathcal M_n (K, \Sigma)(x)$ a.e.\ in $\Omega$, with
	\begin{align*}
		K \in L_{\loc}^\infty (\Omega) \qquad \text{and} \qquad \Sigma \log^{\mu} \left( e + \Sigma \right) \in L^1_\loc(\Omega),
	\end{align*}
	for some $\mu > n-1$. Then $f$ has a continuous representative. 
\end{thm}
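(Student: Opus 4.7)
The plan is to reduce Theorem \ref{thm:bounded_K} to the sharp scalar Sobolev embedding applied coordinatewise, and then to establish the required Orlicz-Sobolev regularity of $|Df|^n$ through a Gehring-type self-improvement at the Zygmund scale.

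The sharp scalar embedding states that a function $u \in W^{1,n}_\loc(\Omega,\R)$ satisfying
\[
|\nabla u|^n \log^{\mu}(e + |\nabla u|) \in L^1_\loc(\Omega)
\]
for some $\mu > n - 1$ has a continuous representative, and the $\log\log\log$ example \eqref{eq:log_log_log} shows that $\mu = n - 1$ is sharp already in this scalar setting. Applied to each coordinate $f^i$, this reduces the theorem to showing that
\[
|Df|^n \log^{\mu}(e + |Df|) \in L^1_\loc(\Omega).
\]
Using the distortion inequality $|Df|^n \leq K_0 J_f + \Sigma$ in a subdomain where $K \leq K_0$, combined with the elementary algebraic bound $(a + b) \log^{\mu}(e + a + b) \leq C_\mu (a \log^{\mu}(e + a) + b \log^{\mu}(e + b))$, and using that $\Sigma \log^{\mu}(e + \Sigma) \in L^1_\loc$ is assumed, the task reduces further to the Jacobian higher integrability
\[
J_f \log^{\mu}(e + J_f) \in L^1_\loc(\Omega).
\]

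For this, I would derive a Caccioppoli-type inequality using the null-Lagrangian integration-by-parts identity for $J_f$ combined with the distortion inequality. Testing $J_f$ against $\varphi^n$ for a smooth cutoff $\varphi$ yields
\[
\left| \int \varphi^n J_f \right| \leq n \int \varphi^{n-1} |\nabla \varphi| \, |f - c| \, |Df|^{n-1},
\]
which, via H\"older and Young, absorbs into the distortion inequality to give
\[
\int \varphi^n |Df|^n \leq C K_0^n \int |\nabla \varphi|^n |f - c|^n + C \int \varphi^n \Sigma.
\]
Chaining with a Sobolev-Poincar\'e inequality at a subcritical exponent $p < n$ produces a reverse H\"older inequality for $|Df|^n$ with an additive $\Sigma$-error term. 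A Gehring-type self-improvement in the Orlicz setting, in the spirit of Iwaniec-Sbordone and adapted to the Zygmund class $L \log^{\mu} L$, then upgrades the baseline $L^1$-integrability of $|Df|^n$ to $L \log^{\mu} L$, with the exponent inherited directly from the assumption on $\Sigma$.

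\textbf{Main obstacle.} The principal technical challenge is carrying out the Gehring self-improvement at the sharp Orlicz exponent $\mu > n - 1$, as opposed to the weaker $\mu > n$ that a Morrey-decay argument in the $L^p$-scale would yield. The threshold $n - 1$ reflects the precise logarithmic loss in the critical Sobolev embedding: a factor of $\log^{n-1}$ is lost passing from the $W^{1,n}$-gradient to the $L^\infty$-norm, and the Zygmund class of $\Sigma$ must exactly compensate for this loss. Closing the estimate will require a careful dependence of the Gehring self-improvement on the Orlicz growth function, together with the precise form of the Sobolev-Poincar\'e inequality and of the integration-by-parts for the Jacobian at the chosen subcritical exponent.
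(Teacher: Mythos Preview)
Your proposal is correct and follows essentially the same route as the paper: a Caccioppoli-type estimate plus Sobolev--Poincar\'e yields a reverse H\"older inequality for $\abs{Df}^n$ with additive error $\Sigma$ (Lemma~\ref{lem:reverse_holder}), a Zygmund-scale Gehring lemma (Lemma~\ref{lem:log_Gehring}) upgrades this to $\abs{Df}^n\log^\mu(e+\abs{Df})\in L^1_\loc$ (Theorem~\ref{thm:bounded_K_higher_int}), and the sharp Orlicz--Sobolev embedding (Theorem~\ref{thm:Zygmund_Morrey}) then gives continuity. Your detour through $J_f\log^\mu(e+J_f)$ is unnecessary---the paper works directly with $\abs{Df}^n$ in the reverse H\"older inequality---but this is cosmetic; your identification of the Zygmund-scale Gehring step as the main technical point is exactly right, and the paper supplies a self-contained proof of precisely that lemma.
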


Furthermore, under the assumptions of Theorem \ref{thm:bounded_K}, the local modulus of continuity
\begin{equation}\label{eq:mod_cont}
	 \omega_f (x_0,r) = \sup  \{ \abs{f(x_0)-f(x)}  \colon x\in \Omega \, ,  \; \;  \abs{x-x_0} \le r \}
\end{equation}
is majorized by $C \log^{-(\mu-n+1)/n} (1/{r})$ for $x_0 \in \Omega$ and small $r>0$. 	
By considering functions of the form $f(x) = (\log^{-\alpha} \abs{x}^{-1}, 0, \dots, 0)$ with $\alpha > 0$, it is easy to see that the above exponent $(\mu-n+1)/n$ is sharp.

Theorem~\ref{thm:bounded_K} is obtained by proving the following sharp higher integrability result for $Df$ on the Zygmund scale. 

\begin{thm}\label{thm:bounded_K_higher_int}
	Suppose that $f \in W_{\loc}^{1,n} (\Omega, \R^n)$ and $Df \in \mathcal M_n(K, \Sigma)$ with
	\begin{align*}
		K \in L_{\loc}^\infty (\Omega) \qquad \text{and} \qquad \Sigma \log^{\mu} \left( e + \Sigma \right) \in L^1_\loc(\Omega),
	\end{align*}
	for some $\mu \ge 0$. Then $\abs{Df}^n \log^\mu (e+\abs{Df}) \in  L^1_{\loc} (\Omega)$. 
\end{thm}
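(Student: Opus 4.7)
The plan is a three-step argument: derive a Caccioppoli-type estimate from the distortion inequality, upgrade it to a reverse Hölder inequality via Sobolev–Poincaré, and then run a Gehring-type iteration refined to the Zygmund scale.

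First I would prove the Caccioppoli-type estimate
\begin{equation*}
\int_\Omega \eta^n |Df|^n \,dx \le C\bigl(n,\|K\|_{L^\infty(\spt\eta)}\bigr) \int_\Omega |D\eta|^n |f-c|^n \,dx + C \int_\Omega \eta^n \Sigma \,dx
\end{equation*}
for every $\eta \in C_c^\infty(\Omega)$ and every $c \in \R^n$. Testing the distortion inequality against $\eta^n$ gives $\int \eta^n |Df|^n \le \int \eta^n K J_f + \int \eta^n \Sigma$. The inequality $|Df|^n \le KJ_f + \Sigma$ forces $J_f \ge -\Sigma/K$, so the negative part of $J_f$ is controlled by $\Sigma$; this lets me replace $K$ by $\|K\|_\infty$ modulo an extra $\Sigma$-term. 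Then the classical null-Lagrangian identity for the Jacobian of a $W^{1,n}$ map, applied to $f-c$, yields $\bigl|\int \eta^n J_f\bigr| \le C \int \eta^{n-1} |D\eta|\,|f-c|\,|Df|^{n-1}$, and Young's inequality absorbs the $|Df|^n$ contribution back into the left-hand side.

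Second, choosing $\eta$ as a standard cutoff between concentric balls $B \Subset 2B \Subset \Omega$ with $c = \dashint_{2B} f$, and invoking the Sobolev–Poincar\'e inequality with exponent $q = n^2/(n+1) < n$, converts the Caccioppoli estimate into the reverse H\"older-type inequality
\begin{equation*}
\dashint_B |Df|^n \,dx \le C \left(\dashint_{2B} |Df|^q \,dx\right)^{n/q} + C \dashint_{2B} \Sigma \,dx,
\end{equation*}
with a constant depending only on $n$ and $\|K\|_{L^\infty(2B)}$.

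Third and most crucially, I would apply a Zygmund-scale variant of Gehring's lemma: a nonnegative $g \in L^1_\loc$ satisfying the above reverse H\"older inequality with forcing term $h = \Sigma \in L \log^\mu L_\loc$ must itself lie in $L \log^\mu L_\loc$, with the same exponent $\mu$. The standard route is a Calder\'on–Zygmund decomposition of $g = |Df|^n$ at levels $\lambda$, yielding a distributional inequality of good-$\lambda$ type whose integration against $\log^\mu \lambda \, d\lambda/\lambda$ produces the claim. The main obstacle is precisely this step: the classical Gehring iteration only delivers $L^{n+\eps}$ gain and hence cannot recover the Zygmund exponent $\mu$ without loss; one has to track the distribution function of $g$ against that of $h$ with extra care so that no exponent is sacrificed in the passage from $\Sigma \log^\mu(e+\Sigma)$ to $|Df|^n \log^\mu(e+|Df|)$. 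Once this refined Gehring step is established, the statement of Theorem~\ref{thm:bounded_K_higher_int} follows by covering $\Omega$ with balls $2B \Subset \Omega$.
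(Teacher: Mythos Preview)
Your three-step strategy---Caccioppoli estimate from the distortion inequality, reverse H\"older via Sobolev--Poincar\'e with exponent $q=n^2/(n+1)$, then a Zygmund-scale Gehring lemma---is exactly the route the paper takes (Lemmas~2.1 and~2.3 there). For the crucial third step the paper does not run a good-$\lambda$ argument directly but instead starts from the level-set inequality $\int_{\{G>t\}} G^p \le \alpha t^{p-q}\int_{\{G>t\}} G^q + \beta\int_{\{H>t\}} H^p$ (borrowed from Iwaniec's proof of the classical Gehring lemma), multiplies by $A'_\mu(t)$ for an auxiliary function chosen so that $t^{p-q}\log(t)\,A'_\mu(t)=\frac{d}{dt}\bigl(t^{p-q}\log^\mu t\bigr)$, and integrates in $t$; this specific choice is what lets the $\log^\mu$ factor on the right be absorbed into the left with no loss of exponent, which is precisely the obstacle you flagged.
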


It is worth noting that  the sharp local $1/K$-H\"older continuity result for spatial $K$-quasiregular mappings cannot be obtained from known higher integrability results. Indeed, while $K$-quasiregular mappings have been shown to belong to the Sobolev space $W_{\loc}^{1,pn}(\Omega, \R^n)$ for some $p>1$~\cite{Gehring,  Meyers-Elcrat_HigherInt}, the sharp exponent $p=p(n,K)$ remains unknown when $n\ge 3$.  A well-known conjecture asserts that
\begin{equation}\label{eq:sharp_K} p(n,K)= \frac{K}{K-1} \, . \end{equation}
In a seminal work, Astala~\cite{Astala-ACTA} established the sharp exponent in the planar case.

This conjecture also has a counterpart for mappings $f\in W_{\loc}^{1,n} (\Omega, \R^n)$ with $Df \in \mathcal M_n (K, \Sigma)$. Indeed, if $\norm{K}_{L^\infty(\Omega)} \leq K_\circ$, we expect that $f \in W_{\loc}^{1,pn}(\Omega, \R^n)$ whenever $\Sigma \in L^p_{\loc}(\Omega)$ for all $p \leq p(n, K_\circ)$, where $p(n, K_\circ)$ is as in \eqref{eq:sharp_K}. This is the maximal amount of higher integrability of $Df$ possible when $\Sigma \in L^p_{\loc}(\Omega)$, which can be seen by taking $f = (g, 0, \dots, 0)$ and $\Sigma = \abs{\nabla g}^n$, where $g$ is any function in $W^{1, pn}_\loc(\Omega) \setminus \bigcup_{q > n} W^{1, qn}_\loc(\Omega)$. However, similar to the quasiregular theory, current tools are only enough to prove a result like this with an unknown value of $p(n, K_\circ)$. 

\begin{thm}\label{thm:bounded_K_higher_int_Lp}
	For given $n\ge 2$ and $K_\circ \in [1, \infty)$, there exists a value $p(n, K_{\circ}) > 1$, such that if $f \in W_{\loc}^{1,n} (\Omega, \R^n)$ and $Df \in \mathcal M_n (K, \Sigma)$ with
	\begin{align*}
		\norm{K}_{L^\infty(\Omega)} \leq K_\circ \qquad \text{and} \qquad \Sigma \in L^{p}_\loc(\Omega),
	\end{align*}
	for some $p \in [1, p(n, K_{\circ}))$, then $\abs{Df}^n  \in  L^{p}_{\loc} (\Omega)$. 
\end{thm}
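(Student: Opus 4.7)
The plan is to establish a reverse Hölder inequality for $\abs{Df}^{n/2}$ and then appeal to the Gehring--Giaquinta--Modica self-improvement lemma. This is the classical route used to prove higher integrability for quasiregular mappings, here adapted to accommodate the additive error term $\Sigma$.

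The first step is a Caccioppoli-type estimate. Fix a ball $B = \B^n(x_0, 2r) \Subset \Omega$ and a cutoff $\eta \in C_c^\infty(B)$ with $\eta \equiv 1$ on the half-size ball $\tfrac12 B$ and $\abs{\nabla \eta} \leq C/r$. Multiplying $\abs{Df}^n \leq K_\circ J_f + \Sigma$ by $\eta^n$ and integrating, the key input is the standard Jacobian integration-by-parts identity
\[
\int_B \eta^n J_f \, dx = -n \int_B \eta^{n-1}(f_1 - c_1)\, d\eta \wedge df_2 \wedge \cdots \wedge df_n,
\]
valid for $f \in W^{1,n}_\loc$ and any constant vector $c \in \R^n$, taken here as $c = f_B$, the average of $f$ on $B$. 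Combining this with the pointwise bound $\abs{d\eta \wedge df_2 \wedge \cdots \wedge df_n} \leq \abs{\nabla \eta}\,\abs{Df}^{n-1}$, Young's inequality, and absorption of the resulting $\abs{Df}^n$-term (possible because $K \leq K_\circ$ is bounded), yields
\[
\int_{\tfrac12 B} \abs{Df}^n \, dx \leq \frac{C(n, K_\circ)}{r^n} \int_B \abs{f - f_B}^n \, dx + C(n, K_\circ) \int_B \Sigma \, dx.
\]

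Applying the Sobolev--Poincaré inequality with exponent $n/2 < n$ (whose Sobolev conjugate equals $n$) converts this into the reverse Hölder inequality
\[
\dashint_{\tfrac12 B} \abs{Df}^n \, dx \leq C(n, K_\circ) \left( \dashint_B \abs{Df}^{n/2} \, dx \right)^2 + C(n, K_\circ) \dashint_B \Sigma \, dx.
\]
Setting $g := \abs{Df}^{n/2}$ and $h := \Sigma^{1/2}$, this is a reverse Hölder inequality of exponent $2$ with error term $h$. The Gehring--Giaquinta--Modica self-improvement lemma then produces $\delta = \delta(n, K_\circ) > 0$ such that whenever $h \in L^{2q}_\loc(\Omega)$ with $q \in [1, 1+\delta)$, one automatically has $g \in L^{2q}_\loc(\Omega)$. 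Translating back, $\Sigma \in L^q_\loc(\Omega)$ implies $\abs{Df}^n \in L^q_\loc(\Omega)$, and the desired exponent is $p(n, K_\circ) := 1 + \delta(n, K_\circ) > 1$.

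The proof introduces no substantive new difficulty beyond the known ones in the quasiregular theory: the additive $\Sigma$-term passes cleanly through the Caccioppoli and reverse Hölder machinery, while the integration-by-parts identity for Jacobians at the Sobolev regularity $W^{1,n}$ is justified by standard smooth approximation. The main obstacle is inherent to the method: the Gehring--Giaquinta--Modica step yields only an inexplicit $\delta > 0$, so the resulting exponent $p(n, K_\circ)$ is not sharp in dimension $n \geq 3$, mirroring the analogous gap in the quasiregular higher integrability problem where the conjectural sharp exponent in \eqref{eq:sharp_K} is known only in the planar case.
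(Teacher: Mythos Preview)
Your proposal is correct and follows essentially the same route as the paper: a Caccioppoli-type estimate for the Jacobian combined with Sobolev--Poincar\'e yields a reverse H\"older inequality, and then Gehring's lemma gives the higher integrability with an exponent depending only on $n$ and $K_\circ$. The only cosmetic difference is that the paper bounds the Caccioppoli term via H\"older's inequality and uses the Sobolev--Poincar\'e exponent $n^2/(n+1)$ (thus avoiding any absorption), whereas you use Young's inequality with absorption and the exponent $n/2$; both variants are standard and lead to the same conclusion.
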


\subsection{Results for general $K$}

In the last 20 years, systematic studies of \emph{mappings of finite distortion} have emerged in the field of  geometric function theory. Recall that a mapping $f\in W^{1,n}_{\loc} (\Omega, \R^n)$ has finite distortion if $\abs{Df(x)}^n \leq K(x) \det Df(x)$ a.e.\ on $\Omega$ for some measurable $K \colon \Omega \to [1, \infty)$: that is, if $Df\in \mathcal M_n(K,0)$.  Thus,  the class of mappings of finite distortion extends the theory of mappings of bounded distortion to the degenerate elliptic setting, \cite{Iwaniec-Martin_book, Hencl-Koskela-book}.  There one finds applications in materials science, particularly in nonlinear elasticity. The mathematical models of nonlinear elasticity have been pioneered by Antman~\cite{Antman_Elasticity-book}, Ball~\cite{Ball_nonlinear-elasticity} and Ciarlet~\cite{Ciarlet_Elasticity-book}.
 
In general, some bounds on the distortion are needed to obtain a full theory, analogous to the theory of quasiregular maps. The continuity property, however, follows without any restriction on the distortion function $K$. Precisely, if $K\colon \Omega \to [1,\infty )$ is any measurable function, then a Sobolev mapping $f\in W^{1,n}_{\loc} (\Omega, \R^n)$ with $Df\in \mathcal M_n (K,0)$ has a continuous representative~\cite{Godshtein-Vodopyanov_Continuity, Iwaniec-Koskela-Onninen-Invent}. 

Surprisingly, the continuity problem becomes a lot more challenging when $\Sigma \not\equiv 0$. Our next result shows that the solutions need not be continuous even in the case of bounded $\Sigma$ if the distortion $K$ is just a measurable function.

\begin{thm}\label{thm:discontinuity_bounded_sigma} 
	There exist a domain $\Omega \subset \mathbb R^2$ and a Sobolev map $f\in W^{1,2} (\Omega , \mathbb R^2)$ such that $0\in \Omega$, $f \in C (\Omega \setminus \{0\}, \mathbb R^2)$, $\lim_{x \to \infty} \abs{f(x)}=\infty$, and $Df \in \mathcal M_2(K, \Sigma)$ with
 	\[
 		\Sigma \in L^\infty (\Omega ) 
 		\qquad \text{and} \qquad K\in L^{1}(\Omega)\,.  
 	\]
\end{thm}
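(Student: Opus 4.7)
The plan is to construct an explicit pair $(\Omega, f)$ realizing the claimed properties. The guiding observation is that mappings of finite distortion (the case $\Sigma \equiv 0$ with any measurable $K$) are automatically continuous, so the discontinuity at $0$ must be driven entirely by the slack term $\Sigma$. The canonical discontinuous $W^{1,2}$ example $(\log\log(e^e/|x|),\,0)$ is rank one with $J_f \equiv 0$, which forces $\Sigma \geq |Df|^2 \notin L^\infty$; hence a genuinely rank-two construction is needed, in which the Jacobian $J_f$ absorbs most of $|Df|^2$ on the bulk of the domain.

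My approach is a ``skeleton plus perturbation'' ansatz $f = \Phi + \Psi$, where $\Phi \colon \Omega \to \R^2$ is a smooth orientation-preserving skeleton providing a positive baseline Jacobian, and $\Psi$ is a bounded $W^{1,2}$ perturbation carrying the singular behavior at $0$. A natural candidate for the perturbation is $\Psi(x) = (\sin(\log\log(e^e/|x|))\,\rho(x),\,0)$ with $\rho$ a smooth cutoff; this is bounded, discontinuous at $0$ in an oscillatory manner, and has $|\nabla\Psi|^2 \in L^1(\Omega)$ in two dimensions. Granting such an ansatz, the verification would proceed by computing $Df = D\Phi + D\Psi$ and partitioning $\Omega$ into $\{J_f > 0\}$ and $\{J_f \leq 0\}$: on $\{J_f > 0\}$, I would set $K = (|Df|^2 - M)^+/J_f$ for a suitable constant $M$, and bound $\int K\,dA$ by a Cauchy--Schwarz argument comparing $|\nabla\Psi|$ with $\sqrt{J_\Phi}$, using $\Psi \in W^{1,2}$ and the nondegeneracy of $\Phi$; on $\{J_f \leq 0\}$, I would verify $|Df|^2 \leq M$ directly. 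Finally one verifies $f \in W^{1,2}$, continuity on $\Omega \setminus \{0\}$, and the requisite unboundedness of $|f|$.

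The main obstacle is controlling $|Df|^2$ on the set $\{J_f \leq 0\}$: because $|\nabla\Psi|$ is unbounded near $0$ while $|\nabla\Phi|$ is bounded, a naive sum $\Phi + \Psi$ has $J_f$ changing sign at points where $|\nabla\Psi|$ is arbitrarily large, violating $\Sigma \in L^\infty$. To overcome this I would almost certainly iterate the construction on a sequence of annuli $A_k \downarrow 0$, placing a bump perturbation $\Psi_k$ of carefully tuned amplitude and support on each $A_k$, so that each individual $\Psi_k$ meets the pointwise bound $|D(\Phi+\Psi_k)|^2 \leq M$ on its respective ``bad'' set, the amplitudes are summable in the senses required for $|Df|^2 \in L^1$, the resulting $|f|$ drifts to infinity through accumulation across the annuli, and the total distortion $K = \sum_k K_k$ lies in $L^1(\Omega)$. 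The balance between the amplitudes, the widths of $A_k$, and the sizes of the sign-flip sets is the main quantitative task, while verifying $W^{1,2}$-regularity at the transitions between consecutive $A_k$ is a standard if tedious calculation.
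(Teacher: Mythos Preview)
Your proposal does not contain the key geometric idea and, as it stands, cannot close. The obstruction you yourself identify is fatal and is not removed by passing to annuli. On the set $\{J_f\le 0\}$ the inequality $|Df|^2\le K J_f+\Sigma$ with $\Sigma\in L^\infty$ forces $|Df|\le C$ pointwise. If your perturbations $\Psi_k$ are ``steps'' (so that their values accumulate and $|f|$ drifts to infinity), then on each annulus $A_k$ of radial width $w_k$ the amplitude $c_k$ of $\Psi_k$ obeys $|\nabla\Psi_k|\gtrsim c_k/w_k$ on a set of positive measure. For a rank-one perturbation $\Psi_k=(u_k,0)$ added to a smooth $\Phi$, the sign of $J_f$ is governed by $\partial_{x_1}u_k$, and wherever $|\nabla u_k|$ is large this derivative is large with both signs on comparable portions of $A_k$; thus the sign-flip set is not small and carries $|Df|\gtrsim c_k/w_k$. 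The constraint $|Df|\le C$ there gives $c_k\lesssim w_k$, whence $\sum_k c_k\lesssim\sum_k w_k\le r_0<\infty$, contradicting the required blow-up $|f(x)|\to\infty$ as $x\to 0$. Your initial ansatz $\Psi=(\sin(\log\log(e^e/|x|))\rho,0)$ is even worse for this purpose: it is bounded, so $|\Phi+\Psi|$ cannot tend to infinity at all.

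The paper resolves this tension by abandoning the annulus picture entirely and using two interlocking logarithmic spirals $r=g(\theta)=(\theta\log\theta)^{-1}$ and $r=h(\theta)=\tfrac12(g(\theta)+g(\theta+2\pi))$ around the origin. The domain is split into a region $B$ between the spirals where $J_f<0$ and $|Df|$ is bounded, and a complementary region $A$ where $J_f>0$ and $\Sigma\equiv 0$. The crucial point is that $B$ is an \emph{infinitely long} spiral corridor, so even though $f$ is Lipschitz in the path metric on $B$, its imaginary part $-\log\log\theta$ still escapes to infinity as one winds in toward $0$. On $A$ one takes $K\approx\theta^2$, and the spiral is thin enough that $\int_A K\lesssim\int_{\theta_0}^\infty \theta^{-1}\log^{-2}\theta\,d\theta<\infty$. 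Matching the two definitions across the spiral boundaries is done via Lambert's $W$-function. None of this structure is visible in a ``smooth skeleton plus radial bump'' ansatz; the infinite path length of the $\Sigma$-active region is the whole mechanism.
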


On the other hand, it is well known that mappings of exponentially integrable distortion behave in many ways like quasiregular mappings~\cite{Hencl-Koskela-book}. For instance, if a nonconstant Sobolev mapping $f \colon \Omega \to \R^n$ satisfies $Df\in \mathcal M_n (K,0)$ with $\exp (\lambda K) \in L^{1}(\Omega)$ and $\lambda > 0$, then $f$ is both discrete and open~\cite{Kauhanen-Koskela-Maly}. Moreover, the local modulus of continuity $\omega_f(x_0 , r)$ of $f$ is majorized up to a multiplicative constant by $\log^{-\lambda/n} (1/r)$ if $x_0 \in \Omega$ and $r>0$ is sufficiently small~\cite{Iwaniec-Koskela-Onninen-Invent}. This raises a natural question in the general case $Df\in \mathcal M_n (K,\Sigma)$: is there a version of the continuity result of Theorem~\ref{thm:bounded_K} where the boundedness assumption $K \in L^\infty_\loc(\Omega)$ has been relaxed to $\exp(\lambda K) \in L^1_\loc(\Omega)$ for some $\lambda > 0$.
The next result shows that this is not the case for arbitrary $\lambda > 0$.  
\begin{thm}\label{thm:discontinuity_K_Lp}  
	For every $\mu \in (0, 2)$, there exist a domain $\Omega \subset \mathbb R^2$ and a Sobolev map $f\in W^{1,2} (\Omega, \mathbb R^2)$ such that $0\in \Omega$, $f \in C (\Omega \setminus \{0\}, \mathbb R^2)$, $\lim_{x \to \infty} \abs{f(x)}=\infty$, and $Df \in \mathcal M_2(K, \Sigma)$ with
	\[  
		\exp (\lambda K )\in L^{1}(\Omega)  
		\qquad \text{and} \qquad   
		\Sigma \log^\mu (e+\Sigma) \in L^1(\Omega) \, 
	\]
	for every $\lambda > 0$.
 \end{thm}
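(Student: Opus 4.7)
The plan is to construct $f$ explicitly. The basic obstruction is that the scalar iterated-log example from \eqref{eq:log_log_log} has $J_f \equiv 0$ and therefore forces $\Sigma \ge |Df|^2$ pointwise; a direct polar computation then shows that only $\mu < n - 1 = 1$ is attainable this way. To reach $\mu$ arbitrarily close to $2$, the construction must produce a nonvanishing nonnegative Jacobian so that the $KJ_f$ term in the distortion inequality can absorb a nontrivial share of $|Df|^2$.

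First I would fix a discontinuous radial profile $\phi(r)$ blowing up at $r=0$ but barely belonging to $W^{1,2}$, for instance an iterated logarithm $\phi(r) \approx \log^{(k)}(1/r)$ for an appropriate $k$. This supplies the first coordinate of $f(x) = (\phi(|x|), v(x))$ and is the source of the discontinuity at $0$. Next, I would build the auxiliary component $v$ so that $f$ has $J_f \ge 0$ with adequate size. A naive construction fails on a simply connected neighborhood of $0$, since a single-valued $v$ has $v_\theta$ averaging to zero on each circle, forcing $J_f = \phi'(r)\, v_\theta / r$ to change sign. I therefore resort to a multiscale winding-type construction on a sequence of dyadic annuli $A_k = \{r_{k+1} \le |x| < r_k\}$ with $r_k \to 0$: $v|_{A_k}$ is a winding map of order $N_k$, tuned so that $J_f$ is positive and comparable to $|Df|^2 / K$ on the bulk of each annulus, with $K(x) \lesssim \log\log(1/|x|)$. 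The transitions between consecutive annuli are smoothed, absorbing the winding-number mismatches into small interface regions where $\Sigma$ will carry the excess.

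Given such an $f$, I would take $K(x) = |Df(x)|^2 / J_f(x)$ on $\{J_f > 0\}$ (and $K \equiv 1$ elsewhere) and $\Sigma(x) = \max(|Df(x)|^2 - K(x) J_f(x), 0)$, so that trivially $Df \in \mathcal{M}_2(K, \Sigma)$. The designed bound $K \lesssim \log\log(1/|x|)$ yields $\exp(\lambda K) \in L^1(\Omega)$ for every $\lambda > 0$ by a direct radial computation. For the Zygmund integral $\int \Sigma \log^\mu(e+\Sigma)\,dx$, the substitution $s = \log(1/|x|)$ reduces it to an integral of the form $\int s^{\mu-2}\,(\text{slowly varying factors})\,ds$, where the slowly varying factors (iterated inverse logs arising from $\phi$) ensure convergence precisely when $\mu < 2$, matching the target.

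The main obstacle is the design of $v$ on the dyadic annuli, balancing the competing requirements that (i) $f$ remains in $W^{1,2}(\Omega, \mathbb{R}^2)$ and continuous on $\Omega \setminus \{0\}$; (ii) $J_f$ has a definite positive sign on the bulk of each annulus with a quantitative lower bound; (iii) the actual distortion stays below $\log\log(1/|x|)$; and (iv) the leftover $\Sigma$ from the annular interfaces has exactly the Zygmund decay corresponding to the prescribed $\mu$. Too many windings $N_k$ inflate $|Df|^2$ uncontrollably, while too few leave $J_f$ too small for $KJ_f$ to absorb most of $|Df|^2$; achieving the sharp threshold $\mu < 2$ is what makes this matching delicate.
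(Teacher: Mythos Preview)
Your high-level plan is sound---use a discontinuous radial first coordinate, arrange $J_f>0$ on most of the domain so that $KJ_f$ absorbs the bulk of $|Df|^2$, and let $\Sigma$ carry the rest on a small exceptional set---but the mechanism you propose for producing $J_f>0$ has a gap, and it is exactly the obstruction you yourself flag. With $f=(\phi(|x|),v(x))$ one has $J_f=\phi'(r)\,r^{-1}\partial_\theta v$, and since $v$ is single-valued, $\partial_\theta v$ integrates to zero over \emph{every} circle $\{|x|=r\}$. Hence on each circle there is an arc of positive length where $J_f\le0$. Your fix is to put ``winding maps of order $N_k$'' on dyadic annuli and concentrate $\Sigma$ on the \emph{radial} transitions between consecutive annuli; but the sign problem is \emph{angular}, not radial, and no amount of matching annular data at $r=r_k$ removes it. Moreover, for a scalar $v\colon A_k\to\mathbb R$ there is no well-defined winding; a map such as $v(r,\theta)=h_k(r)\sin(N_k\theta)$ still has $\partial_\theta v$ changing sign on half of each circle, so the ``bulk'' where $J_f>0$ is never more than half the domain. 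With only a radial exceptional set you would at best recover the integral $\int s^{\mu-2}\,ds$ you wrote down, which converges only for $\mu<1$, not $\mu<2$.

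The paper resolves the angular obstruction directly: it takes $f_2(r,\theta)=h(r)\theta$ on a large angular sector $A=\{|\theta|>\gamma(r)\}$, so that $\partial_\theta f_2=h(r)>0$ there, and then uses a narrow cusp $B=\{|\theta|<\gamma(r)\}$ to reconnect the two boundary values continuously. In $B$ one has $J_f<0$, and one sets $\Sigma=2|Df|^2$ there. The point is that the exceptional set is angularly thin (width $2\gamma(r)$ on each circle), and with $h(r)=\log^{-\nu}r^{-1}$, $\gamma(r)=\log^{1-\nu}r^{-1}$, $\nu\in(\mu,2)$, the Zygmund integral over $B$ becomes $\int s^{\mu-1-\nu}\,ds$ after the substitution $s=\log r^{-1}$, convergent precisely because $\nu>\mu$. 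In $A$ one gets $K\lesssim\log^{\nu-1}r^{-1}$, and $\nu<2$ yields $\exp(\lambda K)\in L^1$ for all $\lambda$. So the correct geometry is an angular cusp, not a radial annular decomposition; your proposal would work if you moved the $\Sigma$-region from the annular interfaces to a shrinking angular sector on each circle, which is essentially the paper's construction.
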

Nevertheless, it is possible to obtain a modulus of continuity in the case with $\exp(\lambda K) \in L^1_\loc(\Omega)$ and $\Sigma \log^\mu (e+\Sigma) \in L^1_\loc(\Omega)$, if one assumes $\lambda$ and $\mu$ to be sufficiently large.

\begin{thm}\label{thm:exp_cont_modulus}
	Let $\Omega \subset \R^n$ be a  domain, and let $f \in W^{1,n}_\loc(\Omega, \R^n)$ and   $Df \in \mathcal M_n (K, \Sigma)$ with 
	\begin{align*}
		\exp (\lambda K) \in L^1_\loc(\Omega) \qquad \text{and} \qquad \Sigma \log^{\mu} \left( e + \Sigma \right) \in L^1_\loc(\Omega),
	\end{align*}
	for some $\mu > \lambda > n+1$. Then $f$ has a continuous representative. 
	
	In particular, for all $x_0 \in \Omega$ and sufficiently small $r>0$, we have the following local modulus of continuity estimate:
	\[ 
		\omega_f (x_0, r) \le C \log^{-\alpha} (1/r)  \quad \textnormal{where } \alpha =\frac{\lambda-n-1}{n}\, . 
	\]
\end{thm}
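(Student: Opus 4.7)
My plan is to reduce Theorem~\ref{thm:exp_cont_modulus} to a sharp higher integrability estimate for $Df$ on the Zygmund scale, in the spirit of Theorem~\ref{thm:bounded_K_higher_int}, and then invoke a standard Sobolev-type embedding. Concretely, I would first establish that
\begin{equation*}
\abs{Df}^n \log^{\lambda-2}(e + \abs{Df}) \in L^1_\loc(\Omega).
\end{equation*}
Since $\lambda > n+1$, one has $\beta := \lambda - 2 > n - 1$, so the Sobolev embedding for Zygmund spaces of type $W^1 L^n\log^\beta L$ into logarithmic H\"older spaces yields continuity with local modulus
\begin{equation*}
\omega_f(x_0, r) \leq C \log^{-(\beta - n + 1)/n}(1/r) = C \log^{-\alpha}(1/r),
\end{equation*}
matching the stated exponent $\alpha = (\lambda - n - 1)/n$.

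For the higher integrability step, I would decompose $\Omega$ into $E = \{\abs{Df}^n \geq 2\Sigma\}$ and its complement, following the same splitting idea that underlies Theorem~\ref{thm:bounded_K_higher_int}. On $\Omega \setminus E$, the pointwise bound $\abs{Df}^n \leq 2\Sigma$ combined with $\mu > \lambda > \lambda - 2$ and the Zygmund hypothesis on $\Sigma$ yields the integrability directly. On $E$, the distortion inequality reduces to the quasi-regular form $\abs{Df}^n \leq 2 K J_f$, and the task becomes proving $\int_E K J_f \log^{\lambda-2}(e+\abs{Df})\, \eta^n < \infty$ for suitable cutoffs $\eta$. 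For this I would run a Caccioppoli-type argument testing an auxiliary function built from $f$ and $\eta$, and then apply the Orlicz-Young inequality $st \leq \lambda^{-1} e^{\lambda s} + \lambda^{-1}(t+1)\log(t+1)$ to split $K \cdot (J_f \log^{\lambda-2}(e+\abs{Df}))$ into an $\exp(\lambda K)$-term controlled by hypothesis, plus a term to be absorbed into the left-hand side.

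The main obstacle lies in this absorption step. The natural Orlicz-Young split pairs $\exp(\lambda K)$ with $J_f \log(e + J_f)\log^{\lambda-2}(e+\abs{Df})$, which after invoking $J_f \leq \abs{Df}^n$ produces a contribution of order $\abs{Df}^n \log^{\lambda-1}(e+\abs{Df})$ --- one logarithmic power too many compared to the target exponent $\log^{\lambda-2}$. To close the estimate I would bootstrap: first obtain integrability at a small exponent $\beta_0 \geq 0$, for which the scheme closes trivially since $f \in W^{1,n}_\loc$, then iteratively apply the Caccioppoli bound to raise $\beta$ in controlled steps, using the margin $\lambda > n+1$ to guarantee strict improvement at each stage until $\beta$ reaches $\lambda - 2$. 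The strict inequality $\mu > \lambda$ on the Zygmund exponent of $\Sigma$ is exactly what keeps the $\Sigma$-contribution negligible at every iteration.
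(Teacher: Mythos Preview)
Your approach is fundamentally different from the paper's and has a genuine gap in the absorption/bootstrap step. The paper does \emph{not} prove higher integrability of $\abs{Df}$ here; instead it establishes directly a decay estimate
\[
\int_{\B^n(x,r)} \frac{\abs{Df}^n}{K} \le C \log^{-\lambda}\frac{1}{r}
\]
via the isoperimetric inequality for $\int_{B_r} J_f$, a Gronwall-type differential inequality for $\Phi(r)=\int_{B_r}\abs{Df}^n/K$, and a triple Jensen estimate converting $\exp(\lambda K)\in L^1$ into a bound on $\int_r^R s^{-1}(\,\dashint_{\partial B_s} K^{n-1})^{-1/(n-1)}\,ds$. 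Continuity and the stated modulus then follow from a chain-of-balls telescoping argument \`a la Haj\l asz--Koskela, using the decay plus a further Jensen bound on $(\dashint_{B_r} K^{n-1})^{1/(n^2-n)}\lesssim \log^{1/n}(1/r)$.

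Your bootstrap cannot close as written. Applying the Orlicz--Young split to $K\cdot\bigl(J_f\log^{\beta}(e+\abs{Df})\bigr)$ produces on the ``hard'' side a term comparable to $\abs{Df}^n\log^{\beta+1}(e+\abs{Df})$: the exponent on the right is \emph{one higher} than on the left. Thus each step demands strictly more integrability than it delivers, and the iteration runs backwards. The margin $\lambda>n+1$ controls the multiplicative constant $n/\lambda<1$, but that is irrelevant when the logarithmic exponent on the right exceeds that on the left---there is nothing to absorb. Moreover, the Caccioppoli identity $\int J_f\,\eta^n\le C\int\abs{Df}^{n-1}\abs{f-c}\abs{\nabla\eta}$ relies on $\eta^n$ being an exact differential-form test function; inserting the pointwise weight $K(x)\log^{\beta}(e+\abs{Df(x)})$ destroys this structure, so there is no Caccioppoli inequality for the weighted quantity $\int K J_f \log^\beta(e+\abs{Df})\,\eta^n$ available to you. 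The paper itself remarks (in the discussion preceding Theorem~\ref{thm:MFDvalue_cont}) that for exponentially integrable $K$ its results are ``not based on higher integrability,'' precisely because no such route is known.
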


\subsection{Single-value theory}

Understanding the pointwise behavior of quasi\-regular mappings motivates us to study a variant of the differential inclusion of $\cM_n(K, \Sigma)$. In particular, given $K, \Sigma \colon \Omega \to \R^n$ and $y_0 \in \R^n$, we define a map $\cM_n(K, \Sigma, y_0)$ from $\Omega \times \R^n$ to subsets of $\R^{n \times n}$ by 
\begin{multline}\label{eq:MFDvalue}
	\cM_n(K, \Sigma, y_0) \colon \\
	(x,y) \mapsto \{ A \in \mathbb R^{n \times n} \colon \abs{A}^n \leq K(x) \det A + \abs{y - y_0}^n \Sigma(x)\}.
\end{multline}
Consequently, we obtain a differential inclusion by requiring that $Df(x) \in \cM_n(K, \Sigma, y_0)(x, f(x))$ for a.e.\ $x \in \Omega$, which we again denote by the shorthand $Df \in \cM_n(K, \Sigma, y_0)$.

For $K \in L^\infty(\Omega)$, the differential inclusion $Df \in \cM_n(K, \Sigma, y_0)$ leads to the theory of \emph{quasiregular values} developed by the last two authors in \cite{Kangasniemi-Onninen_Heterogeneous} and \cite{Kangasniemi-Onninen_1ptReshetnyak}. This term is motivated by the fact that for bounded $K \in L^\infty(\Omega)$, solutions of $Df \in \cM_n(K, \Sigma, y_0)$ satisfy a single-value version of the celebrated Reshetnyak's theorem at $y_0$. Precisely, if $f \in W^{1,n}_\loc(\Omega, \R^n)$ is non-constant and $Df \in \cM_n(K, \Sigma, y_0)$ with $K \in [1, \infty)$ constant and $\Sigma \in L^p_\loc(\Omega)$ for some $p > 1$, then $f$ is continuous, $f^{-1}\{y_0\}$ is discrete, the local index $i(x, f)$ is positive in $f^{-1}\{y_0\}$, and every neighborhood of a point of $f^{-1}\{y_0\}$ is mapped to a neighborhood of $y_0$: see \cite[Theorem 1.2]{Kangasniemi-Onninen_1ptReshetnyak}.

Notably, the additional term $\abs{f-y_0}^n$ in the differential inclusion $Df \in \cM_n(K, \Sigma, y_0)$ causes no additional difficulty in our continuity problem on the $L^p$-scale. Indeed, if $f \in W_{\loc}^{1,n} (\Omega, \R^n)$ and $Df \in \cM_n(K, \Sigma, y_0)$ with $\Sigma \in L^p_\loc(\Omega)$, $p > 1$, one can define $\Sigma_0 = \abs{f - y_0}^n \Sigma$ and conclude using the Sobolev embedding theorem that $\Sigma_0 \in L^q_\loc(\Omega)$ for every $q \in [1, p)$. The question then reduces to the continuity of solutions of $Df \in \cM_n(K, \Sigma_0)$. 

The sharpness of such an approach, however,  becomes an issue when one moves to the Zygmund space scale of \eqref{eq:min_assumption_sigma}. Indeed, if $f \in W_{\loc}^{1,n} (\Omega, \R^n)$ satisfies $Df \in \cM_n(K, \Sigma, y_0)$ with $\Sigma \log^\mu (e + \Sigma) \in L^1_{\loc} (\Omega)$, then it can be shown using the Moser-Trudinger inequality that $\Sigma_0 = \abs{f - y_0}^n \Sigma$ satisfies $\Sigma_0 \log^{\mu-n+1} (e + \Sigma_0) \in L^1_{\loc} (\Omega)$. Theorem~\ref{thm:bounded_K} hence yields that $f$ has a continuous representative if $\mu-n+1 > n-1$, i.e.\ $\mu > 2n - 2$. 

This result for $\mu > 2n - 2$, however, turns out to be far from optimal. This is because, by an iteration argument using Theorem \ref{thm:bounded_K_higher_int}, this gap from \eqref{eq:min_assumption_sigma} can be entirely eliminated. Again the mapping $f(x) = (\log \log \log (e^e / \abs{x}), 0, \dots , 0)$ on $\B^n(0, 1)$ shows that the following theorem is sharp.
	
\begin{thm}\label{thm:K_bounded_y_0}
	Let $\Omega$ be a domain in $\R^n$. Suppose that a Sobolev mapping $f \in W_{\loc}^{1,n} (\Omega, \R^n)$ satisfies $Df \in \cM_n(K, \Sigma, y_0)$ with $K \colon \Omega \to [1, \infty)$, $\Sigma \colon \Omega \to [0, \infty)$ and $y_0 \in \R^n$.  If 
	\begin{align*}
		K \in L_{\loc}^\infty (\Omega) \qquad \text{and} \qquad \Sigma \log^{\mu} \left( e + \Sigma \right) \in L^1_\loc(\Omega),
	\end{align*}
	for some $\mu > n-1$, then $f$ has a continuous representative. 
\end{thm}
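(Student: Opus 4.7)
The plan is to reduce Theorem~\ref{thm:K_bounded_y_0} to Theorem~\ref{thm:bounded_K} by an iterative bootstrap on the Zygmund integrability of the auxiliary function
\[
	\Sigma_0(x) := \abs{f(x) - y_0}^n \Sigma(x).
\]
The pointwise inequality defining the inclusion $Df \in \cM_n(K, \Sigma, y_0)$ rewrites immediately as $Df \in \cM_n(K, \Sigma_0)$. Hence it suffices to verify that $\Sigma_0 \log^\nu(e + \Sigma_0) \in L^1_\loc(\Omega)$ for some $\nu > n-1$, since Theorem~\ref{thm:bounded_K} then furnishes a continuous representative. Throughout, I localize to a ball $B \Subset \Omega$ with $K \in L^\infty(B)$ and translate so that $y_0 = 0$.

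The analytic workhorse is a refined Trudinger--Moser embedding of Brezis--Wainger type: for $\alpha \in [0, n-1)$, if $\abs{Df}^n \log^\alpha(e + \abs{Df}) \in L^1_\loc(B)$, then for every $B' \Subset B$ there is a constant $c > 0$ such that $\exp(c \abs{f}^{n/(n-1-\alpha)}) \in L^1(B')$. The Orlicz function $\Phi_\alpha(t) = \exp(c t^{1/(n-1-\alpha)}) - 1$ has complementary Young function equivalent to $s \mapsto s \log^{n-1-\alpha}(e + s)$. Using the pointwise bound $\log^\nu(e + \abs{f}^n \Sigma) \lesssim \log^\nu(e + \abs{f}) + \log^\nu(e + \Sigma)$ and applying Young's inequality to the two resulting products yields
\[
	\int_{B'} \Sigma_0 \log^\nu(e + \Sigma_0) \, \dd x
	\lesssim \int_{B'} \exp(c' \abs{f}^{n/(n-1-\alpha)}) \, \dd x
	+ \int_{B'} \Sigma \log^{\nu + n - 1 - \alpha}(e + \Sigma) \, \dd x,
\]
which is finite as soon as $\nu + n - 1 - \alpha \leq \mu$.

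The iteration now proceeds as follows. Set $\alpha_0 = 0$; the preceding estimate produces $\Sigma_0 \log^{\nu_1}(e + \Sigma_0) \in L^1_\loc(B)$ with $\nu_1 = \mu - (n-1) > 0$, and Theorem~\ref{thm:bounded_K_higher_int} upgrades this to $\abs{Df}^n \log^{\nu_1}(e + \abs{Df}) \in L^1_\loc(B)$. Taking $\alpha_1 := \nu_1$ and iterating, I obtain the linear recursion $\alpha_{k+1} = \alpha_k + (\mu - (n-1))$, so the iterates $\alpha_k = k(\mu - (n-1))$ are strictly increasing. After finitely many steps the corresponding exponent $\nu_{k+1} = \mu - (n-1) + \alpha_k$ surpasses $n - 1$, and a final application of Theorem~\ref{thm:bounded_K} to the inclusion $Df \in \cM_n(K, \Sigma_0)$ delivers the continuous representative of $f$.

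The principal obstacle is obtaining the precise Zygmund--Trudinger embedding used in the second paragraph: one must track the exponent $n/(n-1-\alpha)$ sharply, because the whole point of the iteration is that the logarithmic loss $n-1-\alpha$ shrinks whenever the higher integrability of $\abs{Df}$ improves. Once this embedding and its Young's inequality consequence are in hand, the iteration is a finite, elementary recursion that closes the gap between the naive threshold $\mu > 2n - 2$ and the sharp threshold $\mu > n - 1$.
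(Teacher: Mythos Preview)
Your proposal is correct and follows essentially the same route as the paper: define $\Sigma_0 = \abs{f-y_0}^n \Sigma$, combine the Zygmund--Sobolev embedding (the paper's Theorem~\ref{thm:Zygmund_Sobolev_embedding}) with a Young-type product estimate (the paper's Lemma~\ref{lem:product_int_lemma}) to gain $\mu-(n-1)$ in the logarithmic exponent of $\Sigma_0$, feed this through Theorem~\ref{thm:bounded_K_higher_int}, and iterate. The only detail you leave implicit is that one should perturb $\mu$ slightly so that no iterate $\alpha_k$ lands exactly on $n-1$, where the embedding breaks down; the paper handles this by shrinking $\mu$ so that $n-1$ is not an integer multiple of $\mu-(n-1)$.
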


However, in the case of $Df \in \cM_n(K, \Sigma, y_0)$ with exponentially integrable $K$, the use of this trick is prevented as our results are not based on higher integrability. Hence, the current best bound in this case is the following result, given by the above Moser-Trudinger -argument combined with Theorem~\ref{thm:exp_cont_modulus}.

\begin{thm}\label{thm:MFDvalue_cont}
	Let $\Omega$ be a domain in $\R^n$. Suppose that a Sobolev mapping $f \in W_{\loc}^{1,n} (\Omega, \R^n)$ satisfies $Df \in \cM_n(K, \Sigma, y_0)$ with $K \colon \Omega \to [1, \infty)$, $\Sigma \colon \Omega \to [0, \infty)$ and $y_0 \in \R^n$. If
	\begin{align*}
		\exp (\lambda K) \in L^1_\loc(\Omega) \qquad \text{and} \qquad \Sigma \log^{\mu} \left( e + \Sigma \right) \in L^1_\loc(\Omega),
	\end{align*}
	for some $\mu > \lambda + n - 1 > 2n$, then $f$ has a continuous representative. 
	
	In particular, for all $x_0 \in \Omega$ and sufficiently small $r>0$, we have the following local modulus of continuity estimate:
	\[ 
		\omega_f (x_0, r) \le C \log^{-\alpha} (1/r)  \quad \textnormal{where } \alpha =\frac{\lambda-n-1}{n}\, . 
	\]
\end{thm}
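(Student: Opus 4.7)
The plan is the two-step reduction sketched in the paragraph preceding the statement. First, rewrite the differential inclusion $Df \in \cM_n(K, \Sigma, y_0)$ in the simpler form $Df \in \cM_n(K, \Sigma_0)$ with $\Sigma_0 = \abs{f - y_0}^n \Sigma$; this follows directly from the definition \eqref{eq:MFDvalue}. Since $\mu > \lambda + n - 1$, we have $\mu - n + 1 > \lambda > n+1$, which is precisely the threshold required by Theorem~\ref{thm:exp_cont_modulus} for the pair $(K, \Sigma_0)$. It therefore suffices to verify
\[
\Sigma_0 \log^{\mu - n + 1}(e + \Sigma_0) \in L^1_\loc(\Omega);
\]
once this is in hand, Theorem~\ref{thm:exp_cont_modulus} immediately supplies a continuous representative of $f$ together with the stated modulus of continuity $\log^{-\alpha}(1/r)$, $\alpha = (\lambda - n - 1)/n$.

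To establish this Zygmund integrability, fix a relatively compact subdomain $U \Subset \Omega$. Using $\log(e + ab) \le C + \log(e + a) + \log(e + b)$ and expanding powers, the claim reduces to bounding the three integrals
\[
\int_U \abs{f-y_0}^n \Sigma, \quad \int_U \abs{f-y_0}^n \Sigma \log^{\mu-n+1}(e + \Sigma), \quad \int_U \abs{f-y_0}^n \Sigma \log^{\mu-n+1}(e + \abs{f - y_0}^n).
\]
Each will be estimated by Young's inequality $ab \le \Phi(a) + \Phi^*(b)$ in an appropriate Orlicz duality, with the resulting exponential tail absorbed by the Moser--Trudinger inequality $\exp(c \abs{f - y_0}^{n/(n-1)}) \in L^1_\loc(\Omega)$, which is valid for $f - y_0 \in W^{1,n}_\loc(\Omega, \R^n)$ after a standard cutoff argument reducing to $W^{1,n}_0$ on a slightly larger subdomain.

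For the middle integral, I would take $\Phi(t) = t \log^{n-1}(e + t)$, whose conjugate satisfies $\Phi^*(s) \lesssim \exp(s^{1/(n-1)})$, and apply Young to $a = \Sigma \log^{\mu-n+1}(e + \Sigma)$ and $b = \abs{f-y_0}^n$: the quantity $\Phi(a)$ is comparable to $\Sigma \log^\mu(e + \Sigma) \in L^1_\loc$ by hypothesis, while $\Phi^*(b) \lesssim \exp(\abs{f-y_0}^{n/(n-1)})$ is integrable by Moser--Trudinger. For the first and third integrals, I would instead use $\Phi(t) = t \log^\mu(e + t)$, $\Phi^*(s) \lesssim \exp(s^{1/\mu})$, paired with $a = \Sigma$. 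The hard part will be the third integral, where $\Phi^*$ acts on $b = \abs{f-y_0}^n \log^{\mu-n+1}(e + \abs{f-y_0}^n)$: since $\mu > n - 1$ forces $n/\mu < n/(n-1)$, the extra $\log^{(\mu-n+1)/\mu}$ factor produced inside the exponential is absorbed by any positive power of $\abs{f-y_0}$, yielding $\Phi^*(b) \lesssim \exp(\abs{f-y_0}^{n/(n-1)})$ for large $\abs{f-y_0}$, once again captured by Moser--Trudinger. The main subtlety is making this asymptotic comparison quantitative and verifying that the gap $\mu > \lambda + n - 1 > 2n$ leaves enough room below the Moser--Trudinger threshold $n/(n-1)$ to absorb all of the polynomial and logarithmic factors; once this is done, the three integrals are finite, $\Sigma_0 \log^{\mu-n+1}(e+\Sigma_0) \in L^1_\loc(\Omega)$ follows, and Theorem~\ref{thm:exp_cont_modulus} closes the argument.
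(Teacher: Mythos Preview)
Your proposal is correct and follows the same route as the paper: reduce to $Df \in \cM_n(K,\Sigma_0)$ with $\Sigma_0 = \abs{f-y_0}^n\Sigma$, verify $\Sigma_0\log^{\mu-n+1}(e+\Sigma_0) \in L^1_\loc$ via Moser--Trudinger combined with an Orlicz Young inequality, and invoke Theorem~\ref{thm:exp_cont_modulus}. The only difference is packaging: the paper isolates the product-integrability step as Lemma~\ref{lem:product_int_lemma} (with the elementary Young-type estimate of Lemma~\ref{lem:exp_Young} absorbing the Moser--Trudinger constant), whereas you unpack that lemma by hand into three integrals with slightly different Orlicz pairs.
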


\subsection{$L^p$-integrable $K$}

In the case where $K \in L^p_\loc(\Omega)$ with $p \in [1, \infty]$, we conjecture that Problem \ref{prob:continuity problem} has a positive answer if $\Sigma \in L^q_\loc(\Omega)$ for any $q > p^*$, where $p^*$ is the H\"older conjugate of $p$. In fact, we  conjecture that a stronger statement is true, where $\Sigma \in L^q_\loc(\Omega)$ can be replaced by the hypothesis $\Sigma / K \in L^q_\loc(\Omega)$.

\begin{conj} \label{conj}  
	Let $1\le p, q \le \infty$. Suppose that $f \in W_{\loc}^{1,n} (\Omega, \R^n)$, $Df \in \mathcal M_n (K, \Sigma)$ with $K \geq 1$, $\Sigma \geq 0$, 
	\begin{align*}
			K \in L_{\loc}^p  (\Omega), \qquad \text{and} \qquad  \frac{\Sigma}{K} \in L^q_\loc(\Omega), \qquad \textnormal{where} \;\; \frac{1}{p} + \frac{1}{q} <1 \,. 
		\end{align*}
	Then $f$ has a continuous representative.
 \end{conj}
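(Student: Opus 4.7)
The plan is to establish continuity by producing a small amount of higher integrability for $Df$ and appealing to Sobolev embedding. Since $\Omega \subset \R^n$ and $f \in W^{1,n}_\loc(\Omega, \R^n)$, it suffices to show that $\abs{Df} \in L^{n+\delta}_\loc(\Omega)$ for some $\delta = \delta(n, p, q) > 0$, after which Morrey's embedding yields a locally H\"older continuous representative. The boundary case $p = \infty$ is already covered by Theorem~\ref{thm:bounded_K}: the strict inequality $1/p + 1/q < 1$ then forces $q > 1$, and $\Sigma = K \cdot (\Sigma/K) \in L^q_\loc$ belongs to the Zygmund class of \eqref{eq:min_assumption_sigma}. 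The substantive content of the conjecture is therefore the case of unbounded $K \in L^p_\loc(\Omega)$ with $p < \infty$.

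To produce the higher integrability I would run a Caccioppoli--Gehring argument adapted to the full inequality $\abs{Df}^n \leq K J_f + \Sigma$. For concentric balls $B = B(x_0, r) \Subset 2B \Subset \Omega$ and a cutoff $\phi \in C^\infty_c(2B)$ with $\phi \equiv 1$ on $B$ and $\abs{\nabla \phi} \leq 2/r$, the starting point is
\[
	\int_B \abs{Df}^n \leq \int_{2B} \phi^n K J_f + \int_{2B} \phi^n \Sigma.
\]
The Jacobian integral $\int \phi^n K J_f$ should be treated by an Iwaniec-type Hodge / div-curl integration by parts (as in the higher integrability theory for mappings of $L^p$-distortion), which converts it into an expression essentially bounded by $\norm{K}_{L^p(2B)}$ times $r^{-1}$ times an $L^{p^*}$-norm of $\abs{f - (f)_{2B}} \abs{Df}^{n-1}$, where $p^* = p/(p-1)$. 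The inhomogeneous term $\int \phi^n \Sigma$ is handled by splitting $\Sigma = K \cdot (\Sigma/K)$ and applying H\"older's inequality with exponents $p$, $q$, and $t = (1 - 1/p - 1/q)^{-1}$, producing a forcing contribution of size $\norm{K}_{L^p(2B)} \norm{\Sigma/K}_{L^q(2B)} \, r^{n(1 - 1/p - 1/q)}$, which tends to zero as $r \to 0$ precisely because $1/p + 1/q < 1$. Feeding $\abs{f - (f)_{2B}}$ into a sub-$n$ Sobolev--Poincar\'e inequality and rearranging should yield a reverse H\"older inequality of the form
\[
	\Bigl( \dashint_{B} \abs{Df}^n \Bigr)^{1/n}
	\leq C \Bigl( \dashint_{2B} \abs{Df}^s \Bigr)^{1/s}
	+ (\text{forcing term})
\]
for some $s < n$, to which Gehring's lemma in its inhomogeneous form applies and delivers $\abs{Df} \in L^{n+\delta}_\loc(\Omega)$.

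The main obstacle, and plausibly the reason this remains a conjecture, is executing the Iwaniec-type integration by parts when both $K$ and $\Sigma$ are unbounded \emph{simultaneously}. In each of the previously understood settings one only needs to spend a single H\"older exponent: $1/p$ for mappings of $L^p$-distortion with $\Sigma = 0$, or $1/q$ for bounded $K$ with $\Sigma \in L^q$. Here one must spend $1/p$ on the Jacobian side and an independent $1/q$ on the inhomogeneous side, and any successful argument must keep these two H\"older budgets strictly separated so that no additional exponent is wasted in the Sobolev--Poincar\'e or Gehring machinery. The assumption $1/p + 1/q < 1$ is precisely the budget that leaves a positive remainder available as the self-improvement exponent, and its sharpness is witnessed by the $L^p/L^q$ counterexamples described in the introduction. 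Designing a potential-theoretic decomposition that realizes this tight budget --- rather than a bootstrap that bleeds extra H\"older room --- is the technical heart of the problem and likely requires tools beyond those developed in the present paper.
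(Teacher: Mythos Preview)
The statement you are addressing is labeled a \emph{Conjecture} in the paper, not a theorem; the paper offers no proof of it. What the paper does establish is only the \emph{necessity} of the hypothesis $p^{-1}+q^{-1}<1$, via the explicit discontinuous examples of Theorems~\ref{thm:discontinuity_bounded_sigma}, \ref{thm:discontinuity_K_Lp}, and~\ref{thm:cusp_ex_version_2}. There is therefore nothing to compare your argument against on the positive side.

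To your credit, your write-up is not really a proof attempt but an honest diagnosis of why the statement is open: you correctly observe that the $p=\infty$ case reduces to Theorem~\ref{thm:bounded_K}, and you correctly isolate the obstruction in the unbounded-$K$ regime, namely that the Caccioppoli step $\int \phi^n K J_f \lesssim \int \abs{Df}^{n-1}\abs{f-c}\abs{\nabla\phi}$ used in Lemma~\ref{lem:reverse_holder} relies on $J_f$ being a null Lagrangian, and this structure is destroyed once an unbounded weight $K$ sits inside the integral. Your proposed remedy (an Iwaniec-type Hodge decomposition spending a $1/p$ budget on the Jacobian side and an independent $1/q$ budget on the $\Sigma$ side) is the natural candidate, and your observation that the strict inequality $p^{-1}+q^{-1}<1$ is exactly the residual H\"older room needed for a Gehring self-improvement is the right heuristic. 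But none of this has been carried out, and the paper does not claim otherwise: the statement remains a conjecture.
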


In order to justify the assumption $p^{-1} + q^{-1} < 1$ of Conjecture \ref{conj}, we point out that we have a discontinuous example in the case $p = 1, q = \infty$ due to Theorem~\ref{thm:discontinuity_bounded_sigma}. Moreover, in the case $q = 1, p = \infty$, the triple logarithm map \eqref{eq:log_log_log} provides a discontinuous example. The necessity of the assumption for the remaining cases $1 < p < \infty$ is then given by the following example. 

\begin{thm}\label{thm:cusp_ex_version_2}
	Let $p, q \in (1, \infty)$. If $p^{-1} + q^{-1} \geq 1$, then there exists a domain $\Omega \subset \mathbb R^2$ and a Sobolev map $f\in W^{1,2} (\Omega, \mathbb R^2)$ such that $0\in \Omega$, $f \in C (\Omega \setminus \{0\}, \mathbb R^2)$, $\lim_{x \to \infty} \abs{f(x)}=\infty$, and $Df \in \mathcal M_2(K, \Sigma)$ with
	\begin{equation}\label{eq:K_SigmaperK_duality}  
		K \in L^p  (\Omega) \qquad \text{and} \qquad \frac{\Sigma}{K} \in L^q (\Omega).
	\end{equation}
\end{thm}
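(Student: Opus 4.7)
The plan is to exploit a simple observation: if $f$ takes values in a one-dimensional subspace of $\R^2$, then $J_f \equiv 0$, so the differential inclusion $\abs{Df}^2 \leq K J_f + \Sigma$ collapses to $\abs{Df}^2 \leq \Sigma$, which does not involve $K$ at all. Thus $K$ decouples from the distortion inequality and is free to be chosen so as to balance the $L^p$-norm of $K$ against the $L^q$-norm of $\Sigma/K$ via H\"older's inequality, with the boundary case $p^{-1}+q^{-1}=1$ corresponding to saturation.

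Concretely, I would take $\Omega = \B^2(0,1)$ and $f(x) = (g(x), 0)$ with $g(x) = \log\log(e^e/\abs{x})$. A standard computation shows $g \in W^{1,2}(\Omega)$ (with $\abs{\nabla g(x)}^2 = \abs{x}^{-2}(e-\log\abs{x})^{-2}$ integrable), so $f \in W^{1,2}(\Omega, \R^2)$, $f$ is smooth on $\Omega \setminus \{0\}$, and $\abs{f(x)} = g(x) \to \infty$ as $x \to 0$. Since $\abs{Df}^2 = \abs{\nabla g}^2$ and $J_f \equiv 0$, setting $\Sigma := \abs{\nabla g}^2$ makes the distortion inequality hold with equality for every choice of $K \geq 1$. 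The value of $K$ is then dictated by H\"older: set $\alpha := 2q/(p+q)$ and $K(x) := 1 + \abs{\nabla g(x)}^{\alpha}$. The algebraic identity $\alpha p = (2-\alpha) q = 2pq/(p+q)$, combined with the hypothesis $p^{-1}+q^{-1}\geq 1$ (equivalently $2pq/(p+q)\leq 2$), yields the pointwise bounds $K^p \leq C(1 + \abs{\nabla g}^2)$ and $(\Sigma/K)^q \leq C(1+\abs{\nabla g}^2)$. The right-hand side is integrable by $W^{1,2}$-integrability of $g$, so $K \in L^p(\Omega)$ and $\Sigma/K \in L^q(\Omega)$, as required.

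No real obstacle arises: the only conceptual step is the observation that a scalar-valued $f$ trivializes the role of $K$ in the differential inclusion, after which the verification is routine. The sharpness of the threshold $p^{-1}+q^{-1}=1$ is precisely the equality case of H\"older, reflected here in the identity $\alpha p = (2-\alpha)q$, and the assumption $p, q \in (1,\infty)$ is used only so that the exponent $\alpha$ lies in a range making $K$ genuinely unbounded yet $L^p$-integrable.
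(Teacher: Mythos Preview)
Your proof is correct and takes a genuinely different, much simpler route than the paper's. The paper builds a map with nontrivial second coordinate: the disk is split into a region $A$ where $J_f>0$ and a cusp-shaped region $B$ (bounded by $\theta=\pm\log^{-\varepsilon} r^{-1}$) where $J_f<0$; one takes $K=\abs{Df}^2/J_f$, $\Sigma=0$ in $A$ and $K=\abs{Df}^2/(-J_f)$, $\Sigma=2\abs{Df}^2$ in $B$, so that $K$ is \emph{forced} by the map and the integrability of $K$ and of $\Sigma/K=-J_f$ must be extracted from the cusp geometry. Your rank-one example makes $J_f\equiv 0$, decoupling $K$ from the distortion inequality entirely and letting you manufacture it to hit the H\"older balance; the verification becomes a one-line estimate. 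What the paper's construction buys is reusability: the same cusp template, with different auxiliary functions $h,\gamma$, also yields Theorems~\ref{thm:discontinuity_K_Lp} and~\ref{lem:is_this_also_geq}, where your trick necessarily fails---for a rank-one map one is forced to take $\Sigma=\abs{\nabla g}^2$, so $\Sigma\log^\mu(e+\Sigma)\in L^1$ with $\mu>1$ would make $g$ continuous by Theorem~\ref{thm:Zygmund_Morrey}, and $\Sigma\in L^s$ with $s>1$ would make $g\in W^{1,2s}$ continuous by the classical Morrey embedding. The paper's example also has $J_f\neq 0$ a.e., arguably a less degenerate witness, though the statement does not demand this.
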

Furthermore, we give several versions of Theorem~\ref{thm:cusp_ex_version_2} where \eqref{eq:K_SigmaperK_duality} is replaced by a condition of the type 
	\[  
		K \in L^p  (\Omega) \qquad \text{and} \qquad \Sigma \in L^s (\Omega),
	\]
see Theorems~\ref{lem:is_this_also_geq} and~\ref{thm:spiral_ex_version_2} for details.

\section{Results based on higher integrability}

In this section, we prove the continuity results that are based on higher integrability: Theorems \ref{thm:bounded_K}, \ref{thm:bounded_K_higher_int}, \ref{thm:bounded_K_higher_int_Lp}, and \ref{thm:K_bounded_y_0}.

\subsection{Higher integrability on the $L^p$-scale.}

The higher integrability result of Theorem \ref{thm:bounded_K_higher_int_Lp} is essentially the same as \cite[Lemma 6.1]{Kangasniemi-Onninen_1ptReshetnyak}, with only minor tweaks to account for the non-constant $K$. We regardless recall the argument for the convenience of the reader, as we require the reverse H\"older inequality proven during the argument for our later proof of Theorem \ref{thm:bounded_K_higher_int}.

If $B=\B^n(x, r)$ is a ball and $c \in (0, \infty)$, then we denote $cB = \B^n(x, cr)$. Similarly, if $Q = x + (-r, r)^n \subset \R^n$ is a cube and $c \in (0, \infty)$, we denote $cQ = x + (-cr, cr)^n$.

\begin{lemma}\label{lem:reverse_holder}
	Suppose that $f \in W^{1,n}_\loc (\Omega, \R^n)$ and $Df \in \mathcal M_n (K, \Sigma)$ with
	\begin{align*}
		K \in L^\infty (\Omega) \qquad \text{and} \qquad \Sigma \in L^1_\loc(\Omega).
	\end{align*}
	Then for every cube $Q$ such that $\overline{2Q} \subset \Omega$, we have the reverse H\"older inequality
	\[
		\left( \dashint_Q \abs{Df}^n \right)^\frac{n}{n+1}
		\leq C(n) \norm{K}_{L^\infty(\Omega)}^\frac{n}{n+1} \left(  \dashint_{2Q} \abs{Df}^\frac{n^2}{n+1} + \left( \dashint_{2Q} \Sigma \right)^\frac{n}{n+1} \right).
	\]
\end{lemma}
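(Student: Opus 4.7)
This is a Caccioppoli-type reverse H\"older inequality. The approach combines the pointwise distortion inequality with an integration-by-parts bound for the Jacobian and the Sobolev--Poincar\'e inequality, with H\"older exponents chosen to land exactly on $\abs{Df}^{n^2/(n+1)}$.

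First, I would fix a smooth cutoff $\eta \in C_c^\infty(2Q)$ with $\eta \equiv 1$ on $Q$, $0 \le \eta \le 1$, and $\abs{\nabla \eta} \le C(n)/\ell$, where $\ell$ is the sidelength of $Q$. Multiplying the pointwise inequality $\abs{Df}^n \le K J_f + \Sigma$ by $\eta^n$, integrating over $2Q$, and using $K \le \norm{K}_{L^\infty(\Omega)}$ reduces the problem to controlling $\int_{2Q} \eta^n J_f$ in terms of a suitable power of $\int_{2Q} \abs{Df}^{n^2/(n+1)}$.

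Next, I would invoke the classical distributional identity $\sum_j \partial_j \operatorname{cof}_{ij}(Df) = 0$, which is valid for $f \in W^{1,n}(\Omega, \R^n)$. Expanding the Jacobian along the first row and integrating by parts with respect to $\eta^n$ yields
\[
\int_{2Q} \eta^n J_f = -n \sum_j \int_{2Q} (f_1 - c_1)\, \eta^{n-1} (\partial_j \eta)\, \operatorname{cof}_{1j}(Df),
\]
where $c = f_{2Q}$ is the mean of $f$ over $2Q$. Using Hadamard's bound $\abs{\operatorname{cof}(Df)} \le \abs{Df}^{n-1}$ together with $\abs{\nabla \eta} \le C(n)/\ell$ gives
\[
\int_{2Q} \eta^n J_f \le \frac{C(n)}{\ell} \int_{2Q} \abs{f-c}\, \abs{Df}^{n-1}.
\]

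Then I would apply H\"older's inequality with exponents $n^2$ and $n^2/(n^2-1)$; the key algebraic point is that $(n-1) \cdot n^2/(n^2-1) = n^2/(n+1)$, and the Sobolev conjugate of $n^2/(n+1)$ is precisely $n^2$. Therefore the Sobolev--Poincar\'e inequality supplies
\[
\Bigl(\int_{2Q} \abs{f-c}^{n^2}\Bigr)^{1/n^2} \le C(n) \Bigl(\int_{2Q} \abs{Df}^{n^2/(n+1)}\Bigr)^{(n+1)/n^2},
\]
and combining with the H\"older bound produces $\int_{2Q} \eta^n J_f \le (C(n)/\ell) \bigl(\int_{2Q} \abs{Df}^{n^2/(n+1)}\bigr)^{(n+1)/n}$. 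Dividing through by $\abs{Q} = \ell^n$, converting to averages (so the $\ell$ factors cancel by the built-in scaling of Sobolev--Poincar\'e), raising to the power $n/(n+1)$, and using $(a+b)^{n/(n+1)} \le a^{n/(n+1)} + b^{n/(n+1)}$ together with $\norm{K}_{L^\infty(\Omega)} \ge 1$ then delivers the stated inequality.

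The only delicate point is the justification of integration by parts, which rests on the nontrivial fact that the cofactor matrix of a $W^{1,n}$ map is distributionally divergence-free; this is standard and is handled by mollification and a limit passage. Beyond that, the proof is essentially bookkeeping, and the exponents $n^2$ and $n^2/(n+1)$ are forced by requiring the H\"older conjugate and Sobolev conjugate to be compatible.
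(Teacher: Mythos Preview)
Your approach is essentially the same as the paper's: cutoff, Caccioppoli-type bound for $\int \eta^n J_f$ via the divergence-free cofactor identity, then H\"older with exponents $n^2$ and $n^2/(n^2-1)$ and Sobolev--Poincar\'e. The exponent bookkeeping and scaling are correct.

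There is one small but genuine slip. You write that after multiplying $\abs{Df}^n \le K J_f + \Sigma$ by $\eta^n$ and integrating, ``using $K \le \norm{K}_{L^\infty(\Omega)}$'' reduces matters to bounding $\int_{2Q} \eta^n J_f$. This replacement is only valid where $J_f \ge 0$, and in the setting $Df \in \cM_n(K,\Sigma)$ with $\Sigma \not\equiv 0$ the Jacobian can be negative on a set of positive measure. The paper sidesteps this by first dividing the pointwise inequality by $K$ to obtain $\abs{Df}^n/K \le J_f + \Sigma/K$, integrating against $\eta^n$, and only then multiplying by $\norm{K}_{L^\infty}$ (using $K \ge 1$ to control the $\Sigma/K$ term); this yields $\int_Q \abs{Df}^n \le \norm{K}_{L^\infty}\int \eta^n J_f + \norm{K}_{L^\infty}\int \eta^n \Sigma$ without any sign assumption on $J_f$. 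Your argument is easily repaired in this way (or alternatively by observing that $J_f^- \le \Sigma/K \le \Sigma$ pointwise and absorbing the extra term), but as written the step does not go through.
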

\begin{proof}
	Let $Q$ be such a cube. Choose a cutoff function $\eta \in C^\infty_0(2Q)$ such that $0 \leq \eta \leq 1$, $\eta \equiv 1$ on $Q$, and $\abs{\nabla \eta} \leq C_1(n) \abs{2Q}^{-1/n}$. By using the assumed $\abs{Df}^n \leq K J_f + \Sigma$ and $K \geq 1$ as well as a Caccioppoli-type inequality, we have
	\begin{align*}
		&\dashint_Q \abs{Df}^n 
		\leq \frac{\norm{K}_{L^\infty}}{\abs{Q}}
			\int_\Omega \frac{\abs{Df}^n \eta^n}{K} \\
		&\qquad\leq \frac{\norm{K}_{L^\infty(\Omega)}}{\abs{Q}} 
			\int_\Omega J_f \eta^n 
			+ \frac{\norm{K}_{L^\infty(\Omega)}}{\abs{Q}} 
			\int_\Omega \frac{\Sigma\eta^n}{K} \\
		&\qquad\leq \frac{C_2(n) \norm{K}_{L^\infty(\Omega)}}{\abs{Q}}
			\int_\Omega \abs{Df}^{n-1} \eta^{n-1} \abs{f-c} \abs{\nabla \eta}
			+ \frac{\norm{K}_{L^\infty(\Omega)}}{\abs{Q}} 
			\int_\Omega \Sigma\eta^n
	\end{align*}
	By using $\abs{\nabla \eta} \leq C_1(n) \abs{2Q}^{-1/n}$, $\eta \leq 1$, and $\abs{2Q} = 2^n\abs{Q}$, we hence obtain that
	\[
		\dashint_Q \abs{Df}^n 
		\leq C_3(n) \norm{K}_{L^\infty}  \left(
			\frac{1}{\abs{Q}^{\frac{1}{n}}}
				\dashint_{2Q} \abs{Df}^{n-1} \abs{f-c} 
			+ \dashint_{2Q} \Sigma \right).
	\]
	H\"older and Sobolev-Poincar\'e inequalities then yield that
	\begin{align*}
		&\frac{1}{\abs{Q}^{\frac{1}{n}}}
		\dashint_{2Q} \abs{Df}^{n-1} \abs{f-c}\\
		&\qquad \leq \left( \dashint_{2Q} \abs{Df}^\frac{n^2}{n+1}
			\right)^\frac{n^2 - 1}{n^2}
		\frac{1}{\abs{Q}^{\frac{1}{n}}} 
			\left( \dashint_{2Q} \abs{f-c}^{n^2} \right)^\frac{1}{n^2}\\
		&\qquad \leq \left( \dashint_{2Q} \abs{Df}^\frac{n^2}{n+1}
			\right)^\frac{n^2 - 1}{n^2}
		C_4(n) \left( \dashint_{2Q} \abs{Df}^\frac{n^2}{n+1} \right)^\frac{n+1}{n^2}\\
		&\qquad = C_4(n) \left(  \dashint_{2Q} 
			\abs{Df}^\frac{n^2}{n+1} \right)^\frac{n+1}{n}.
	\end{align*}
	Thus, the claimed estimate follows by using the elementary inequality $a + b \leq (a^{1/p}+b^{1/p})^p$ for $a,b\geq 0$, $p \geq 1$
\end{proof}

We then recall the statement of Theorem \ref{thm:bounded_K_higher_int_Lp} and give the short remaining parts of the proof.

\begin{customthm}{\ref{thm:bounded_K_higher_int_Lp}}
	For given $n\ge 2$ and $K_\circ \in [1, \infty)$, there exists a value $p(n, K_{\circ}) > 1$, such that if $f \in W_{\loc}^{1,n} (\Omega, \R^n)$ and $Df \in \mathcal M_n (K, \Sigma)$ with
	\begin{align*}
		\norm{K}_{L^\infty(\Omega)} \leq K_\circ \qquad \text{and} \qquad \Sigma \in L^{p}_\loc(\Omega),
	\end{align*}
	for some $p \in [1, p(n, K_{\circ}))$, then $\abs{Df}^n  \in  L^{p}_{\loc} (\Omega)$. 
\end{customthm}
\begin{proof}
	Due to $f$ satisfying the reverse H\"older inequality given in Lemma \ref{lem:reverse_holder}, the claimed result follows immediately from the version of Gehring's lemma given in \cite[Proposition 6.1]{Iwaniec-GehringLemma}. The upper bound of higher integrability given there depends only on the constants of the reverse H\"older inequality, which in turn depend only on $n$ and $\norm{K}_{L^\infty(\Omega)}$. 
\end{proof}

\subsection{Higher integrability on the Zygmund space scale}

For the Zygmund space version of our main result, we need a corresponding variant of Gehring's lemma. We expect this to be known, but are not aware of any references that would directly give the version we need. Hence, we provide a proof here of the relevant version of Gehring's lemma, with the proof modeled on the arguments used in \cite[Section 3]{Iwaniec-GehringLemma}. 

\begin{lemma}\label{lem:log_Gehring}
	Let $G, H \in L^p(\R^n)$ be non-negative functions satisfying the reverse H\"older inequality
	\[
		\left(\dashint_Q G^p\right)^\frac{1}{p}
		\leq C \left( \dashint_{2Q} G^q \right)^\frac{1}{q} + \left(\dashint_{2Q} H^p\right)^\frac{1}{p},
	\]
	for all cubes $Q \subset \R^n$, where $1 \leq q < p < \infty$ and $C \geq 1$ is a constant. Then for every $\mu > 0$, we have
	\[
		\int_{\R^n} G^p \log^\mu(e + G) \leq a \int_{\R^n} G^p + b\int_{\R^n} H^p \log^\mu(e + H),
	\]
	with $a = a(C, n, \mu, p, q) \geq 1$ and $b = b(C, n, \mu, p, q) \geq 1$.
\end{lemma}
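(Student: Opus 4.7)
My plan is to prove Lemma~\ref{lem:log_Gehring} by adapting Iwaniec's argument for Gehring's lemma in \cite{Iwaniec-GehringLemma}. I first derive a distributional inequality for $G$ from the reverse H\"older hypothesis via a Calder\'on--Zygmund decomposition, and then integrate this inequality against a carefully chosen weight on $(0, \infty)$ to extract the Zygmund-scale higher integrability. To keep every quantity finite during the absorption step, I integrate only up to some $T < \infty$ and then let $T \to \infty$ at the end via monotone convergence.

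For the distributional inequality, fix $\lambda > 0$ and apply the dyadic Calder\'on--Zygmund decomposition of $G^q$ at level $\lambda^q$; this is possible because $G \in L^p(\R^n)$ with $p > q$ forces $\dashint_Q G^q \to 0$ as $|Q| \to \infty$. The resulting disjoint family of stopping cubes $\{Q_i\}_i$ covers $\{G > \lambda\}$ up to a null set, satisfies $\lambda^q \leq \dashint_{Q_i} G^q \leq 2^n \lambda^q$, and by a standard counting over dyadic cubes at the parent level also gives the dilated bound $\dashint_{2Q_i} G^q \leq C_n \lambda^q$. Applying the reverse H\"older inequality on each $Q_i$ together with the elementary inequality
\[
\left(\dashint_{2Q_i} G^q\right)^{p/q} \leq (C_n \lambda^q)^{p/q - 1} \dashint_{2Q_i} G^q
\]
yields the stopping-cube estimate $\int_{Q_i} G^p \leq C_1 \lambda^{p-q} \int_{2Q_i} G^q + C_2 \int_{2Q_i} H^p$. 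Summing over $i$, using the bounded overlap of $\{2Q_i\}$, and splitting each right-hand integral according to whether $G > \lambda/2$ or $H > \delta \lambda$ (for a small $\delta > 0$, with the low parts absorbed via $|Q_i| \leq \lambda^{-q} \int_{Q_i} G^q$), I arrive at the Gehring-type distributional inequality
\[
\int_{\{G > \lambda\}} G^p \, dx \leq C_3 \lambda^{p-q} \int_{\{G > \lambda/2\}} G^q \, dx + C_4 \int_{\{H > \delta \lambda\}} H^p \, dx,
\]
valid for every $\lambda > 0$.

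For the integration step, multiply both sides by the weight $\omega(\lambda) = \mu \log^{\mu - 1}(e + \lambda)/(e + \lambda)$, whose antiderivative is $\log^\mu(e + \lambda) - 1$, and integrate in $\lambda$ over $(0, T)$. By Fubini, the left-hand side becomes $\int_{\R^n} G^p [\log^\mu(e + \min(G, T)) - 1] \, dx$; the $H$-term becomes, via $\log^\mu(e + H/\delta) \leq C_\delta \log^\mu(e + H) + C_\delta$, at most $C(\delta, \mu) \int_{\R^n} H^p \log^\mu(e + H) \, dx$; and the $G$-term becomes
\[
C_3 \int_{\R^n} G^q \int_0^{\min(2G, T)} \omega(\lambda) \lambda^{p - q} \, d\lambda \, dx.
\]
A short calculation, substituting $u = \log(e + \lambda)$ and integrating by parts, bounds the inner $\lambda$-integral by $C(p, q, \mu)(G^{p-q} \log^{\mu-1}(e + G) + G^{p-q+1})$, so the $G$-term is dominated by $C_5 \int_{\R^n} G^p \log^{\mu-1}(e + G) \, dx + C_5 \int_{\R^n} G^{p+1} \mathbf{1}_{\{G \leq 1\}} \, dx$. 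The crucial gain is that the logarithmic exponent has dropped from $\mu$ to $\mu - 1$; combined with the elementary inequality $G^p \log^{\mu - 1}(e + G) \leq \eps G^p \log^\mu(e + G) + C(\eps, \mu) G^p$, choosing $\eps$ small lets me absorb this contribution into the left-hand side, while $G^{p+1} \mathbf{1}_{\{G \leq 1\}} \leq G^p$ handles the small-$G$ term. Passing $T \to \infty$ via monotone convergence then yields the desired estimate.

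The main conceptual point is the gain of a logarithmic power in the $G$-term of the integrated inequality, which is ultimately driven by the gap $p > q$ in the reverse H\"older hypothesis: had we started with $p = q$, no such gain would be available and the Zygmund-scale absorption would fail. The remaining technical details (justifying Fubini on the truncated integral, the logarithmic comparison for the $H$-term, and the small-$G$ estimates) are routine bookkeeping.
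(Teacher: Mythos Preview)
Your approach is correct in outline and shares the same skeleton as the paper's proof: both start from the level-set inequality (what the paper records as Lemma~\ref{lem:log_Gehring_level_set_estimate}, essentially your distributional inequality), multiply by a weight on $(0,\infty)$, integrate, apply Fubini, and absorb. The difference lies in the weight and the absorption mechanism. You take the natural weight $\omega(\lambda) = \frac{d}{d\lambda}\log^\mu(e+\lambda)$, so that the left side immediately produces $G^p\log^\mu(e+G)$ while the $G^q$-term yields only $G^p\log^{\mu-1}(e+G)$---a genuine one-log gain---which you then split via $\log^{\mu-1}\le \eps\log^\mu + C(\eps)$. The paper instead designs a weight $A_\mu'$ satisfying the identity $t^{p-q}\log(t)\,A_\mu'(t) = \frac{d}{dt}\bigl(t^{p-q}\log^\mu t\bigr)$ and integrates from a large threshold $M$; after pulling out a factor $1/\log M$, the $G^q$-term produces exactly $G^p\log^\mu(G)$ but with the small coefficient $\alpha/\log M$, absorbed by choosing $M$ large. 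Your route is arguably more transparent; the paper's avoids any asymptotic estimation of the inner $\lambda$-integral.

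One technical point needs care. As written, you bound the inner integral $\int_0^{\min(2G,T)}\omega(\lambda)\lambda^{p-q}\,d\lambda$ by $\int_0^{2G}(\cdots)$ and then try to absorb $\eps\int G^p\log^\mu(e+G)$ into a left side that only carries $\log^\mu(e+\min(G,T))$. On $\{G>T\}$ the term to be absorbed dominates the left-hand contribution, and in any case $\int G^p\log^\mu(e+G)$ may be infinite a priori---it is precisely what you are trying to control---so the subtraction is not justified. The fix is the obvious one: keep the truncation, bounding the inner integral by $C\min(2G,T)^{p-q}\log^{\mu-1}(e+\min(2G,T))$ and then using $\min(2G,T)\le 2G$ and $\min(2G,T)\le 2\min(G,T)$ to get $G^q\cdot(\text{inner integral})\le C'G^p\log^{\mu-1}(e+\min(G,T))$. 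This now absorbs cleanly into $\eps\int G^p\log^\mu(e+\min(G,T))$, which matches the truncated left side and is finite for each $T<\infty$; monotone convergence then finishes as you describe.
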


We start the proof with the following estimate which directly follows from \cite[Section 3]{Iwaniec-GehringLemma}.

\begin{lemma}\label{lem:log_Gehring_level_set_estimate}
	Let $G, H \in L^p(\R^n)$ be non-negative functions satisfying the reverse H\"older inequality
	\[
		\left(\dashint_Q G^p\right)^\frac{1}{p}
		\leq C \left( \dashint_{2Q} G^q \right)^\frac{1}{q} + \left(\dashint_{2Q} H^p\right)^\frac{1}{p},
	\]
	for all cubes $Q \subset \R^n$, where $1 \leq q < p < \infty$ and $C \geq 1$ is a constant. Then for every $t > 0$, we have
	\begin{equation}\label{eq:level_set}
		\int_{G^{-1}(t, \infty)} G^p \leq \alpha t^{p - q} \int_{G^{-1}(t, \infty)} G^q + \beta \int_{H^{-1}(t, \infty)} H^p,
	\end{equation}
	with $\alpha = \alpha(n, C, p, q) > 1$ and $\beta = \beta(n, C, p, q) > 0$.
\end{lemma}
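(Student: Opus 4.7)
The plan is to adapt the Calderón–Zygmund stopping-time argument used in the proof of the classical Gehring lemma in \cite[Section 3]{Iwaniec-GehringLemma}. The inhomogeneous term $H$ enters the estimate additively and does not alter the essential structure of the proof.

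Fix $t > 0$ and a large parameter $\kappa > 1$ to be determined. I would perform a stopping-time decomposition of $G^q$ at a level of order $(\kappa t)^q$, producing a family of pairwise disjoint cubes $\{Q_i\}$ whose union covers $\{G > \kappa t\}$ up to a null set, on each of which the averages of $G^q$ are of order $(\kappa t)^q$ and remain controlled (by a fixed multiple of $(\kappa t)^q$) on the doubled cubes $\{2Q_i\}$. Such a decomposition exists globally on $\R^n$ because the hypothesis $G \in L^p(\R^n)$ forces the averages of $G^q$ on sufficiently large cubes to tend to zero.

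On each $Q_i$, the reverse Hölder hypothesis, raised to the $p$-th power and combined with the stopping-time bound on the $G^q$-averages over $2Q_i$, yields an inequality of the form
\[
    \int_{Q_i} G^p \leq C_1 (\kappa t)^{p-q}\int_{2Q_i} G^q + C_1\int_{2Q_i} H^p,
\]
with $C_1 = C_1(C,n,p,q)$. I would then split each integral on the right at the threshold $t$: on $\{G \leq t\}$ bound $G^q \leq t^q$, while on $\{G > t\}$ use the interpolation $G^q \leq t^{q-p} G^p$ (valid because $q < p$); the $H$-term is split analogously. Summing over $i$ with the bounded overlap of $\{2Q_i\}$ produces an inequality of the shape
\[
    \int_{\{G > \kappa t\}} G^p \leq A\kappa^{p-q} t^{p-q}\int_{\{G > t\}} G^q + B\kappa^{-\gamma}\int_{\{G > t\}} G^p + D\int_{\{H > t\}} H^p,
\]
for some positive exponent $\gamma$ and constants $A,B,D$ depending only on $n, C, p, q$.

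The final step is to transfer the left-hand side from $\{G > \kappa t\}$ to $\{G > t\}$ via the trivial splitting $\int_{\{G > t\}}G^p \leq \int_{\{G > \kappa t\}}G^p + (\kappa t)^{p-q}\int_{\{G > t\}}G^q$, and then to choose $\kappa$ sufficiently large that the coefficient $B\kappa^{-\gamma}$ of the self-improving $G^p$-term is less than $1/2$. Absorbing this term (which is finite since $G \in L^p$) into the left-hand side yields the claimed inequality \eqref{eq:level_set}. The main technical obstacle is arranging the stopping-time construction so that the $G^q$-averages are controlled on the \emph{doubled} cubes $\{2Q_i\}$ rather than on $\{Q_i\}$ alone, as required by the expanded form of the reverse Hölder hypothesis; once this is set up, the inhomogeneous $H$-term introduces no essentially new difficulty, and the hypothesis $q < p$ is used crucially both in the pointwise interpolation $G^q \leq t^{q-p}G^p$ and in making the exponent $\gamma$ strictly positive so that absorption is possible.
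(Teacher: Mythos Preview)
The paper itself does not prove this lemma; it simply observes that the estimate is \cite[Proof of Lemma 3.1, (3.11)]{Iwaniec-GehringLemma} after the substitution $g=G^q$, $h=H^q$, $\Phi(t)=t^{p/q}$. Your sketch attempts to spell out the Calder\'on--Zygmund argument behind that citation, which is the right strategy, but there is a concrete flaw in your splitting/absorption step.

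After the reverse H\"older estimate on each stopping cube you propose to replace $G^q$ by $t^{q-p}G^p$ on $\{G>t\}$ and then absorb. Carrying this through, the $G^p$--term you produce carries the coefficient
\[
C_1(\kappa t)^{p-q}\cdot t^{q-p}=C_1\,\kappa^{p-q},
\]
which \emph{grows} with $\kappa$ since $p>q$; there is no mechanism in the splitting you describe that yields a factor $\kappa^{-\gamma}$ with $\gamma>0$. Hence the absorption cannot be carried out as claimed.

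The fix is to drop the interpolation entirely: the conclusion already has $\int_{\{G>t\}}G^q$ on the right, so on $\{G>t\}$ you should simply keep the $G^q$--integral as it stands. What remains is to handle the contributions from $\{G\le t\}$ and $\{H\le t\}$, which are bounded by $t^q|E|$ and $t^p|E|$ respectively, where $E=\bigcup_i 2Q_i$. The stopping--time inequality $(\kappa t)^q|Q_i|\le \int_{Q_i}G^q$ (together with a further split at level $t$ inside each $Q_i$) gives $|E|\lesssim (\kappa t)^{-q}\int_{\{G>t\}}G^q$, and both residual contributions then fold into the $t^{p-q}\int_{\{G>t\}}G^q$ term. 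One obtains directly
\[
\int_{\{G>\kappa t\}}G^p \le C'\kappa^{p-q}t^{p-q}\int_{\{G>t\}}G^q + C'\int_{\{H>t\}}H^p,
\]
after which the trivial bound $\int_{\{t<G\le\kappa t\}}G^p\le (\kappa t)^{p-q}\int_{\{G>t\}}G^q$ completes the proof for any fixed $\kappa>1$; no absorption is needed at all.
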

\begin{proof}
	This estimate is \cite[Proof of Lemma 3.1, estimate (3.11)]{Iwaniec-GehringLemma}, where in the notation used therein we've chosen $\Phi(t) = t F(t) = t^{p/q}$ with $F(t) = t^{p/q-1}$, $g = G^q$, and $h = H^q$.
\end{proof}

\begin{proof}[Proof of Lemma \ref{lem:log_Gehring}]
	We assume first that $\mu \neq 1$; for $\mu = 1$, see the remark at the end of the proof. We define an auxiliary function 
	\[
		A_\mu(t)=\frac{p- q}{\mu}\log^\mu(t) + \frac{\mu}{\mu-1}\log^{\mu - 1}(t)
	\]
	The purpose of this specific choice is that
	\begin{equation}\label{eq:A_mu_definition}
		t^{p-q} \log(t) A_\mu'(t) 
		= \frac{\dd}{\dd t} (t^{p-q} \log^\mu(t)).
	\end{equation}
	We may select a constant $M > 1$ large enough that $A_\mu$ and $A'_\mu$ are positive on $[M, \infty)$, and also large enough that
	\begin{equation}\label{eq:M_condition}
		A_\mu(t) - \frac{\alpha}{\log(M)} \log^\mu(t) \geq \frac{p-q}{2\mu} \log^\mu(e + t) \qquad \text{for all } t \in [M, \infty),
	\end{equation} 
	where $\alpha$ is from \eqref{eq:level_set}. Let $L > M$. We multiply both sides of \eqref{eq:level_set} with $A_\mu'(t)$, and integrate over $[M, L]$ with respect to $t$. By a use of the Fubini-Tonelli theorem, the left hand side yields
	\begin{multline*}
		\int_M^L A'_\mu(t) \int_{G^{-1}(t, \infty)} G^p(x) \dd x \dd t\\
		= \int_{G^{-1}(L, \infty)} G^p(x) \int_M^L A'_\mu(t) \dd t \dd x
		+ \int_{G^{-1}[M, L]} G^p(x) \int_M^{G(x)} A'_\mu(t) \dd t \dd x\\
		= \int_{G^{-1}(L, \infty)} A_\mu(L) G^p + \int_{G^{-1}[M, L]} G^p A_\mu(G) -  \int_{G^{-1}(M, \infty)} A_\mu(M) G^p.
	\end{multline*}
	By the same computation for the $H^p$-term, we get the upper bound 
	\begin{multline*}
	 	\int_M^L A'_\mu(t) \int_{H^{-1}(t, \infty)} H^p(x) \dd x \dd t\\
		= \int_{H^{-1}(L, \infty)} A_\mu(L) H^p + \int_{H^{-1}[M, L]} H^p A_\mu(H) -  \int_{H^{-1}(M, \infty)} A_\mu(M) H^p\\
	 	\leq  \int_{H^{-1}(L, \infty)} A_\mu(H) H^p + \int_{H^{-1}[M, L]} H^p A_\mu(H) \leq 2\int_{H^{-1}[M, \infty)} H^p A_\mu(H).
	\end{multline*}
	For the $G^q$-term, we use \eqref{eq:A_mu_definition} and similar computations to obtain that
	\begin{multline*}
		\int_M^L A'_\mu(t) t^{p-q} \int_{G^{-1}(t, \infty)} G^q(x) \dd x \dd t\\
		\leq \frac{1}{\log(M)} \int_M^L A'_\mu(t) t^{p-q} \log(t) \int_{G^{-1}(t, \infty)} G^q(x) \dd x \dd t\\
		= \frac{1}{\log(M)} \int_M^L \frac{\dd}{\dd t} (t^{p-q} \log^\mu(t)) \int_{G^{-1}(t, \infty)} G^q(x) \dd x \dd t\\
		\leq  \int_{G^{-1}(L, \infty)} \frac{L^{p-q} \log^{\mu}(L) G^q}{\log(M)} +  \int_{G^{-1}[M, L]} \frac{G^p \log^{\mu}(G)}{\log(M)}.
	\end{multline*}
	In total, we have
	\begin{multline*}
		 \int_{G^{-1}(L, \infty)} A_\mu(L) G^p + \int_{G^{-1}[M, L]} G^p A_\mu(G)\\
		\leq A_\mu(M) \int_{G^{-1}(M, \infty)} G^p + \frac{\alpha}{\log(M)} \int_{G^{-1}(L, \infty)}  L^{p-q} \log^{\mu}(L) G^q\\
		 + \frac{\alpha}{\log(M)} \int_{G^{-1}[M, L]} G^p \log^{\mu}(G) + 2\beta \int_{H^{-1}[M, \infty)} H^p A_\mu(H)
	\end{multline*}
	Note that on $G^{-1}(L, \infty)$, we have $L^{p-q} \leq G^{p-q}$. By applying this and subtracting the $\alpha/\log(M)$-terms from both sides of the above estimate, we obtain
	\begin{multline*}
		\int_{G^{-1}(L, \infty)} \left(A_\mu(L) - \frac{\alpha \log^\mu(L)}{\log(M)}\right) G^p + \int_{G^{-1}[M, L]} \left(A_\mu(G) - \frac{\alpha \log^\mu(G)}{\log(M)}\right) G^p\\
		\leq A_\mu(M) \int_{G^{-1}(M, \infty)} G^p + 2\beta \int_{H^{-1}[M, \infty)} H^p A_\mu(H)
	\end{multline*}
	We then apply \eqref{eq:M_condition}, and conclude that 
	\begin{multline*}
		\int_{G^{-1}[M, L]} G^p \log^\mu(e + G)\\
		\leq \int_{G^{-1}(L, \infty)} G^p \log^\mu(e + L) 
		+ \int_{G^{-1}[M, L]} G^p \log^\mu(e + G)  \\
		\leq \frac{2\mu A_\mu(M)}{p-q} \int_{G^{-1}(M, \infty)} G^p + \frac{4\mu \beta}{p-q} \int_{H^{-1}[M, \infty)} H^p A_\mu(H)
	\end{multline*}
	Notably, this upper bound is independent on $L$. Since we have $0 \leq A_\mu(H) \leq A_\mu(e + H) \leq ((p-q)/\mu + \mu/\abs{\mu - 1}) \log^\mu(e + H)$ in $H^{-1}[M, \infty)$, letting $L \to \infty$ gives us
	\[
		\int_{G^{-1}[M, \infty)} G^p \log^\mu(e + G)
		\leq a_0 \int_{\R^n} G^p + b \int_{\R^n} H^p \log^\mu(e + H),	
	\]
	with $a_0, b$ dependent only on $\alpha, \beta, p, q, \mu$. The final desired claim then follows by combining the previous estimate with
	\[
		\int_{G^{-1}[0, M)} G^p \log^\mu(e + G) \leq \log^\mu(e+M) \int_{\R^n} G^p.
	\]
	
	We finally comment on the case $\mu = 1$. In this case, we must instead define $A_1(t) = (p-q)\log(t) + \log \log(t)$, which yields \eqref{eq:A_mu_definition} for $\mu = 1$. The rest of the proof goes through essentially similarly in this case. 
\end{proof}

With Lemma \ref{lem:log_Gehring} proven, we may proceed to prove Theorem \ref{thm:bounded_K_higher_int}. We again recall the statement.

\begin{customthm}{\ref{thm:bounded_K_higher_int}}
	Suppose that $f \in W_{\loc}^{1,n} (\Omega, \R^n)$ and $Df \in \mathcal M_n (K, \Sigma)$ with
	\begin{align*}
		K \in L_{\loc}^\infty (\Omega) \qquad \text{and} \qquad \Sigma \log^{\mu} \left( e + \Sigma \right) \in L^1_\loc(\Omega),
	\end{align*}
	for some $\mu \ge 0$. Then $\abs{Df}^n \log^\mu (e+\abs{Df}) \in  L^1_{\loc} (\Omega)$.
\end{customthm}
\begin{proof}
	We select a ball $B = \B^n(x_0, r)$ with $r \leq 1$ such that $\overline{B} \subset \Omega$. By using Lemma \ref{lem:reverse_holder}, we obtain that $\abs{Df}$ and $\Sigma$ satisfy a reverse H\"older inequality for all cubes $Q$ with $2Q \subset B$. It was shown in the proof of \cite[Proposition 6.1]{Iwaniec-GehringLemma} that in this case, the functions $G, H \colon \R^n \to [0, \infty)$ defined by
	\begin{align*}
		G(x) &= \dist(x, \R^n \setminus B) \abs{Df(x)}^\frac{n^2}{n+1}\\ 
		H(x) &= \dist(x, \R^n \setminus B) \Sigma^\frac{n}{n+1} + \chi_{B}(x) \left( \int_{B} \Sigma \right)^\frac{n}{n+1}
	\end{align*}
	satisfy a reverse H\"older inequality in all of $\R^n$. In particular, it follows from Lemma \ref{lem:log_Gehring} that
	\begin{equation}\label{eq:alt_functions_reverse_holder}
		\int_{\R^n} G^\frac{n+1}{n} \log^\mu(e + G) \leq a \int_{\R^n} G^\frac{n+1}{n} + b \int_{\R^n} H^{\frac{n+1}{n}} \log^\mu(e + H).
	\end{equation}
	
	We then assume $0 < \eps < r$, and denote $B_\eps = \{x \in B : \dist(x, R^n \setminus B) > \eps\}$. We note that for $t \geq 1$, and $p \geq 1$, we may estimate using Bernoulli's inequality that
	\[
		e + \eps t^p = e(1 + \eps e^{-1} t^p) \geq e^\eps (1 + e^{-1} t^p)^\eps = (e + t^p)^\eps \geq (e + t)^\eps.
	\] 
	Hence, for every point $x \in B_\eps$, we have either $\abs{Df(x)} \leq 1$ and thus also $\abs{Df(x)}^n \log^\mu\left(e + \abs{Df(x)}\right) \leq \log^\mu(e + 1)$, or
	\begin{align*}
		G^\frac{n+1}{n} \log^\mu(e + G) 
		&\geq \eps^{\frac{n+1}{n}} \abs{Df}^n \log^\mu(e + \eps \abs{Df}^\frac{n^2}{n+1})\\
		&\geq \eps^{\frac{n+1}{n} + \mu} \abs{Df}^n \log^\mu(e + \abs{Df}).
	\end{align*}
	Consequently,
	\[
		\int_{B_\eps} \abs{Df}^n \log^\mu(e + \abs{Df}) \leq \abs{B_\eps} \log^\mu(e + 1) + \eps^{-\frac{n+1}{n} - \mu} \int_{\R^n} G^\frac{n+1}{n} \log^\mu(e + G).
	\]
	
	On the other hand, $G^{(n-1)/n} \leq \abs{Df}^n \chi_{B} \in L^1(\R^n)$. For the $H$-term of \eqref{eq:alt_functions_reverse_holder}, we have $H \equiv 0$ outside $B$ and $H \leq \Sigma^{n/(n+1)} + C$ in $B$ with $C = \norm{\Sigma}_{L^1(B)}^{n/(n+1)} < \infty$. We recall that we have the elementary inequality $\log(e + a + b) \leq \log(e + a) + \log(e + b)$ for $a, b \geq 0$, and that $\Sigma^{n/(n+1)} \leq 1 + \Sigma$. Hence, we may estimate
	\begin{multline*}
		\int_{\R^n} H^{\frac{n+1}{n}} \log^\mu(e + H)
		\leq \int_B (\Sigma^\frac{n}{n+1} + C)^{\frac{n+1}{n}}
			\log^\mu \bigl( e + \Sigma^\frac{n}{n+1} + C \bigr)\\
		\leq \int_B 2^{\frac{n+1}{n} + \mu} \bigl(\Sigma + C^{\frac{n+1}{n}} \bigr) \bigl(\log^\mu(e + \Sigma) + \log^\mu(e + C + 1) \bigr) < \infty.
	\end{multline*}
	It follows that $\abs{Df}^n \log^\mu(e + \abs{Df})$ has finite integral over $B_\eps$, which completes the proof of the claim.
\end{proof}

\subsection{Embedding theorems}

As stated in the introduction, Theorem \ref{thm:bounded_K} is a direct corollary of combining Theorem \ref{thm:bounded_K_higher_int} with a suitable version of Morrey's inequality for Zygmund spaces. Recall that the classical Morrey's inequality implies that if $p > n$, then elements of $W^{1,p}_\loc(\Omega)$ have a locally H\"older continuous representative, with H\"older exponent $1 - n/p$. For a Zygmund space version, we refer to e.g.\ \cite[Theorem 3.1]{Iwaniec-Onninen_ContEstForN-Harm}, which gives us the following.

\begin{thm}\label{thm:Zygmund_Morrey}
	Let $\Omega \subset \R^n$ be a domain, and let $f \in W^{1,n}_\loc(\Omega)$ satisfy
	\[
		\abs{Df}^n \log^\mu(e + \abs{Df}) \in L^1_\loc(\Omega),
	\]
	where $\mu > n-1$. Then $f$ has a continuous representative. In particular, whenever $0 < r < R$ and $\overline{\B^n(x, R)} \subset \Omega$, the modulus of continuity $\omega_f (x_0,r)$ defined in \eqref{eq:mod_cont} satisfies
	\[
		\omega_f(x, r) \leq C(Df, \mu, x, R) \log^\frac{\mu - n + 1}{n}\left( 1 + \frac{2R}{r} \right),
	\]
	where
	\[
		C(Df, \mu, x, R) = \dashint_{\B^n(x, R)} \abs{Df}^n \log^\mu \left( e + \frac{\abs{\B^n(x, r)} \abs{Df}}{\norm{Df}^n_{L^n(\B^n(x, R))}} \right).
	\] 
\end{thm}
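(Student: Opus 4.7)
The plan is to refine the classical Morrey embedding proof to the Zygmund scale via a Riesz-type potential estimate combined with an Orlicz--H\"older inequality. For a Lebesgue point $x$ of $f$ lying inside a ball $B$, I would start from the standard pointwise bound
\[
|f(x) - f_B| \leq C(n) \int_B \frac{|Df(z)|}{|z-x|^{n-1}} \, dz.
\]
Applying this with $B = B(x, R)$ to each of $x$ and a nearby point $y$ with $|x-y| \leq r$, together with the triangle inequality $|f(x) - f(y)| \leq |f(x) - f_B| + |f(y) - f_B|$, would reduce the problem to controlling two such Riesz potentials of $|Df|$.

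The key step will be an Orlicz--H\"older inequality using the Young function $\Phi(t) = t^n \log^\mu(e + t)$ (determined by our integrability hypothesis) and its Young conjugate $\Psi$, which has asymptotic behavior $\Psi(s) \sim s^{n/(n-1)} \log^{-\mu/(n-1)}(e + s)$ as $s \to \infty$. Orlicz--H\"older would give
\[
\int_B \frac{|Df(z)|}{|z-x|^{n-1}} \, dz \leq 2 \, \| |Df| \|_{L^\Phi(B)} \, \| |z-x|^{-(n-1)} \|_{L^\Psi(B)}.
\]
The first factor on the right is finite by hypothesis and is controlled, up to dimensional constants, by the quantity $C(Df, \mu, x, R)$ appearing in the theorem; this identification is essentially the definition of the Luxemburg norm. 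The crucial quantitative task is the second factor. Passing to polar coordinates, one would compute
\[
\int_{B(x, r)} \Psi\!\left( \frac{|z-x|^{-(n-1)}}{\lambda} \right) dz \sim \lambda^{-n/(n-1)} \int_0^r \frac{d\rho}{\rho \log^{\mu/(n-1)}(1/\rho)},
\]
whose radial integral is finite precisely when $\mu > n-1$, with value of order $\log^{1 - \mu/(n-1)}(1/r)$. Solving for the value of $\lambda$ that makes the left side equal to $1$ would then produce
\[
\| |z-x|^{-(n-1)} \|_{L^\Psi(B(x, r))} \sim \log^{(n-1-\mu)/n}(1/r) = \log^{-(\mu-n+1)/n}(1/r),
\]
which is exactly the advertised exponent. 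Careful bookkeeping of the outer radius $R$ and the range of integration would replace $\log(1/r)$ by the correct expression $\log(1 + 2R/r)$.

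The main technical obstacle will be this Luxemburg-norm computation: one has to identify the asymptotic form of the conjugate Young function $\Psi$ precisely and execute the polar integration rigorously while tracking all constants, especially to pin down the dependence on $C(Df,\mu,x,R)$ with the exact normalization given in the theorem. It is worth noting that the condition $\mu > n-1$ enters the proof at exactly one point, namely the convergence of the radial integral $\int_0 d\rho / (\rho \log^{\mu/(n-1)}(1/\rho))$; this both explains why the hypothesis takes this form and shows why the resulting exponent is sharp. Once the oscillation estimate is in place, the existence of a continuous representative would follow automatically: the Lebesgue representative $x \mapsto \lim_{r \to 0} \dashint_{B(x, r)} f$ inherits uniform continuity on compact subsets from the quantitative modulus.
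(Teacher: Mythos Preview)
The paper does not prove this statement; it quotes it from \cite[Theorem~3.1]{Iwaniec-Onninen_ContEstForN-Harm} as a known result. Your Riesz-potential-plus-Orlicz--H\"older outline is the standard route to such embeddings and is sound in principle.

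There is, however, one genuine inconsistency. You apply the pointwise Riesz estimate with $B=\B^n(x,R)$, so Orlicz--H\"older gives the product $\norm{\abs{Df}}_{L^\Phi(B)}\cdot\norm{\abs{z-x}^{1-n}}_{L^\Psi(B)}$ with $B$ of radius $R$; yet you then compute the Luxemburg norm of the kernel over $\B^n(x,r)$. These must be the same ball. If you keep the large ball, the kernel norm carries no dependence on $r$ whatsoever, and you obtain no modulus of continuity. The fix is to apply the Riesz bound over a ball $B'$ of radius comparable to $r$ containing both $x$ and $y$: then $\norm{\abs{Df}}_{L^\Phi(B')}\le\norm{\abs{Df}}_{L^\Phi(\B^n(x,R))}$ by monotonicity of the Luxemburg norm (this is what produces the constant $C(Df,\mu,x,R)$), while your polar-coordinate computation over $B'$ correctly yields $\norm{\abs{z-x}^{1-n}}_{L^\Psi(B')}\approx\log^{-(\mu-n+1)/n}(1/r)$. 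With this amendment the argument goes through.
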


Hence, by combining Theorems \ref{thm:bounded_K_higher_int} and \ref{thm:Zygmund_Morrey}, the proof of Theorem \ref{thm:bounded_K} is complete.

Due to us requiring it in the following subsection, we also recall the corresponding result for $\mu \in [0, n-1)$. In this case, $f$ is not necessarily continuous, but does satisfy an exponential Sobolev embedding theorem. We refer to e.g.\ \cite[Theorem 2, Example 1]{Cianchi_Orlicz-Trudinger} for the following result; note also that the case $\mu = 0$ corresponds to the classical Moser-Trudinger inequality.

\begin{thm}\label{thm:Zygmund_Sobolev_embedding}
	Let $\Omega \subset \R^n$ be a domain, and let $f \in W^{1,n}_\loc(\Omega)$ satisfy
	\[
		\abs{Df}^n \log^\mu(e + \abs{Df}) \in L^1_\loc(\Omega),
	\]
	where $0 \leq \mu < n-1$. Then there exists $\lambda > 0$ such that
	\[
		\exp(\lambda \abs{f}^\frac{n}{n-1-\mu}) \in L^1_\loc(\Omega).
	\]
\end{thm}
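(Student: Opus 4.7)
The proof is an application of Cianchi's sharp Orlicz--Sobolev embedding to the Young function $A(t)=t^n\log^\mu(e+t)$, the natural Orlicz space in which $Df$ lies under the hypothesis. The plan has three steps.

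First, I would verify that we are in the exponential regime rather than the $L^\infty$ regime. Cianchi's dichotomy is governed by whether the Dini-type integral $\int^\infty (\tau/A(\tau))^{1/(n-1)} \dd \tau$ converges: for $A(t) = t^n \log^\mu(e+t)$ the integrand is asymptotic to $\tau^{-1}\log^{-\mu/(n-1)}(e+\tau)$, which is integrable at infinity precisely when $\mu > n-1$. The standing assumption $\mu < n-1$ thus places us in the Trudinger--Moser regime, and the sharp embedding theorem of \cite{Cianchi_Orlicz-Trudinger} produces an optimal target Young function $A_n$ with the local inclusion $f \in L^{A_n}_\loc(\Omega)$ (after the usual cutoff reduction to a relatively compact subdomain).

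Second, I would extract the growth of $A_n$ at infinity from the explicit formula defining it in terms of $A$. Substituting $u = \log \tau$ yields
\[
    \int_C^s \tau^{-1} \log^{-\mu/(n-1)}(e+\tau) \dd\tau
    \sim \frac{n-1}{n-1-\mu}(\log s)^{(n-1-\mu)/(n-1)}
\]
as $s \to \infty$. Raising this to the $(n-1)/n$ power (per Cianchi's formula for the Sobolev conjugate) and inverting through $A$ should give the asymptotic $A_n(t) \sim \exp(c\, t^{n/(n-1-\mu)})$ for some $c > 0$. The claim then follows: $f \in L^{A_n}_\loc(\Omega)$ means that $\int_K A_n(|f|/c_0) < \infty$ for every compact $K \subset \Omega$ and some $c_0 > 0$, which by the exponential asymptotics of $A_n$ is precisely $\exp(\lambda |f|^{n/(n-1-\mu)}) \in L^1_\loc(\Omega)$ for a suitably small $\lambda > 0$.

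The main obstacle is the asymptotic bookkeeping of the second step: one must carefully track exponents through the inversion defining $A_n^{-1}$ to confirm that the Trudinger-type exponent is exactly $n/(n-1-\mu)$, and to verify that the lower order logarithmic corrections to $A_n$ are absorbed by shrinking $\lambda$. The genuine analytic content---the sharp Orlicz--Sobolev embedding itself---is entirely supplied by Cianchi's theorem; the remaining work is symbolic manipulation specializing that general embedding to the concrete Zygmund-type Young function $t^n \log^\mu(e+t)$.
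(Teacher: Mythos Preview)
Your proposal is correct and aligns with the paper's own treatment: the paper does not give a proof of this statement but simply cites \cite[Theorem 2, Example 1]{Cianchi_Orlicz-Trudinger}, which is precisely the Cianchi embedding you invoke, with the relevant Young-function computation already recorded there as Example~1. Your outline of the asymptotic verification is accurate and fills in the details the paper omits.
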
 

\subsection{Continuity for \eqref{eq:MFDvalue} with bounded $K$}

The final result we prove in this section is Theorem \ref{thm:K_bounded_y_0}. For the proof, we require the following lemma on the integrability of products of functions.

\begin{lemma}\label{lem:product_int_lemma}
	Let $\Omega \subset \R^n$ be measurable, let $\mu, \nu, \lambda > 0$ be such that $\nu \leq \mu$, and let $f, g \colon \Omega \to [0, \infty]$ be measurable functions such that
	\[
	f \log^\mu (e + f) \in L^1_\loc(\Omega), \qquad \exp\bigl(\lambda g^\frac{1}{\nu}\bigr) \in L^1_\loc(\Omega). 
	\]
	Then $fg \log^{\mu - \nu} (e + fg) \in L^1_\loc(\Omega)$.
\end{lemma}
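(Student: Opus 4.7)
The plan is to prove the claim by partitioning an arbitrary compact $K \subset \Omega$ into two measurable pieces and handling the integrand on each via one of the two hypotheses. Specifically, I would fix a constant $c > \lambda^{-\nu}$ and set
\[
  K_1 = \{x \in K : g(x) \leq c \log^\nu(e + f(x))\}, \qquad K_2 = K \setminus K_1.
\]
The threshold $g \approx \log^\nu(e+f)$ is the scale at which the Orlicz spaces $L\log^\mu L$ and $\exp L^{1/\nu}$ balance one another: on $K_1$ the factor $g$ is small enough to be absorbed by a logarithmic factor of $f$, while on $K_2$ the factor $f$ is small enough to be absorbed into an exponential in $g^{1/\nu}$.

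For the estimate on $K_1$, I would start from the elementary inequality $e + fg \leq (e+f)(e+g)$, which yields $\log(e + fg) \leq \log(e + f) + \log(e + g)$. The bound $g \leq c\log^\nu(e+f)$ then forces $\log(e+g) \leq \log\bigl(e + c \log^\nu(e+f)\bigr) \leq C_1 \log(e + f)$ for some $C_1$ depending only on $c$ and $\nu$, using $\log(e+f) \geq 1$ and the sub-linear growth $\log^\nu(e+f) = o(e+f)$. Combining these gives $\log^{\mu-\nu}(e + fg) \leq C_2 \log^{\mu-\nu}(e+f)$, and multiplying by $fg \leq c f \log^\nu(e+f)$ produces the pointwise bound
\[
  fg \log^{\mu-\nu}(e+fg) \leq C_3\, f \log^\mu(e+f) \quad \text{on } K_1,
\]
which is integrable by the first hypothesis.

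On $K_2$ the defining inequality rearranges to $f < \exp(c^{-1/\nu} g^{1/\nu})$, so $fg < g \exp(c^{-1/\nu} g^{1/\nu})$. Where $g$ is bounded this forces $f$ to be bounded as well, so the integrand is trivially bounded. Where $g$ is large one has $\log(e + fg) \leq c^{-1/\nu} g^{1/\nu} + O(\log g) \leq C_4 g^{1/\nu}$, hence
\[
  fg \log^{\mu-\nu}(e+fg) \leq C_5\, g^{\mu/\nu} \exp(c^{-1/\nu} g^{1/\nu}).
\]
The choice $c > \lambda^{-\nu}$ gives $c^{-1/\nu} < \lambda$, so by picking $\epsilon > 0$ with $c^{-1/\nu} + \epsilon \leq \lambda$ and absorbing the polynomial prefactor through the elementary inequality $g^{\mu/\nu} \leq C_\epsilon \exp(\epsilon g^{1/\nu})$, the bound becomes $C_6 \exp(\lambda g^{1/\nu})$, integrable by the second hypothesis.

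The main subtlety is the calibration of the threshold $c$. It must be chosen strictly greater than $\lambda^{-\nu}$ so that the exponential integrability of $g$ can absorb the polynomial loss $g^{\mu/\nu}$ produced in the $K_2$ estimate, but the $K_1$ estimate is uniform in $c > 0$ and places no competing upper bound. This compatibility is essentially the reason why the two hypotheses combine to produce the desired Zygmund-type integrability of $fg$.
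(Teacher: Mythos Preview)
Your proof is correct and takes a genuinely different route from the paper's. The paper isolates a Young-type inequality
\[
ab < \exp\bigl(\lambda a^{1/\kappa}\bigr) + C(\kappa,\lambda)\, b\log^\kappa(e+b)
\]
as a separate lemma and then applies it three times: once to get $fg \in L^1_\loc$, once (with $\lambda/2$ in place of $\lambda$) to split $fg\log^{\mu-\nu}(e+fg)$ into an exponential term in $g$ times a logarithm and a term $f\log^\nu(e+f)\log^{\mu-\nu}(e+fg)$, and once more (with $a = \log^{\mu-\nu}(e+g)$) to handle the residual cross-term. Your level-set decomposition $K_1 = \{g \le c\log^\nu(e+f)\}$ with $c > \lambda^{-\nu}$ accomplishes the same balance in one stroke: on $K_1$ the estimate collapses directly to $f\log^\mu(e+f)$, and on $K_2$ the bound $f < \exp(c^{-1/\nu}g^{1/\nu})$ reduces everything to $g^{\mu/\nu}\exp(c^{-1/\nu}g^{1/\nu}) \le C\exp(\lambda g^{1/\nu})$, the polynomial prefactor being absorbed into the slack $\lambda - c^{-1/\nu} > 0$. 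Your argument is more elementary and self-contained; the paper's version has the advantage of packaging the Orlicz-duality step into a reusable lemma, but at the cost of a somewhat more circuitous proof of the present statement.
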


We begin the proof of Lemma \ref{lem:product_int_lemma} by recalling the proof of the following elementary inequality. See e.g.\ \cite[Lemmas 2.7, 6.2]{Hencl-Koskela-book} for similar results and proofs.

\begin{lemma}\label{lem:exp_Young}
	Let $a, b \geq 0$, and $\kappa, \lambda > 0$. Then
	\[
	ab < \exp\left(\lambda a^{\frac{1}{\kappa}}\right)
	+  C(\kappa, \lambda) b \log^{\kappa}(e + b),
	\]
	where $C(\kappa, \lambda) \geq 0$.
\end{lemma}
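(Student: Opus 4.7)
The plan is to prove this elementary Young-type inequality via a two-case split comparing the sizes of $a^{1/\kappa}$ and $\log(e+b)$, exploiting the fact that the exponential dominates every polynomial near infinity.

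First I would fix a threshold $t_0 = t_0(\kappa, \lambda) > 0$ large enough that $t^\kappa \leq \exp(\lambda t/2)$ for all $t \geq t_0$; such a $t_0$ exists because $\log(t)/t \to 0$ as $t \to \infty$, which forces $\kappa \log t \leq \lambda t / 2$ eventually. I would then split into two cases according to whether the two conditions $a^{1/\kappa} \geq t_0$ and $\lambda a^{1/\kappa} > 2\log(e+b)$ both hold.

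If both hold, the first condition yields $a \leq \exp(\lambda a^{1/\kappa}/2)$ by the choice of $t_0$, while the second yields $b < e+b < \exp(\lambda a^{1/\kappa}/2)$. Multiplying these two bounds gives $ab < \exp(\lambda a^{1/\kappa})$, which is already stronger than what is needed. In the complementary case at least one of the two conditions fails, so either $a < t_0^\kappa$ or $a \leq (2/\lambda)^\kappa \log^\kappa(e+b)$. Using the trivial bound $\log(e+b) \geq 1$, both subcases give $ab \leq C_0 \, b \log^\kappa(e+b)$ with $C_0 = \max(t_0^\kappa, (2/\lambda)^\kappa)$. Setting $C(\kappa,\lambda) := C_0$ and absorbing the strict inequality into the positive term $\exp(\lambda a^{1/\kappa}) > 0$ then completes the proof.

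The argument is entirely mechanical; the only mildly delicate ingredient is the choice of $t_0$, but it is fixed purely in terms of $\kappa$ and $\lambda$ with no dependence on $a$ or $b$, so I do not anticipate any real obstacle. The references to Hencl--Koskela cited in the text confirm that a proof of essentially this form is standard in the mappings-of-finite-distortion literature.
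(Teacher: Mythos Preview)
Your argument is correct. Both your proof and the paper's are elementary two-case splits in which one case bounds $ab$ by the exponential term and the other by $C\,b\log^\kappa(e+b)$, so the overall strategy is the same. The difference is in how the split is chosen: the paper divides according to whether $ab\le\exp(\lambda a^{1/\kappa})$ already holds, and in the complementary case uses the crude bound $\exp(t)\ge A t^{2\kappa}$ together with the assumption $ab>\exp(\lambda a^{1/\kappa})$ to deduce first $a\lesssim b$ and then, after taking logarithms, $a\lesssim \log^\kappa(e+b)$. Your split is more direct---you compare $a^{1/\kappa}$ to a fixed threshold $t_0$ and to $(2/\lambda)\log(e+b)$ simultaneously---and thereby avoid that chain of intermediate inequalities; the price is that your constant $C_0=\max(t_0^\kappa,(2/\lambda)^\kappa)$ is less explicit, since $t_0$ is defined only implicitly, whereas the paper produces a fully computed constant.
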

\begin{proof}
	Note that there exists a constant $A = A(\kappa)$ such that 
	\begin{equation}\label{eq:exp_taylor_est}
		\exp(t) \geq A t^{2\kappa}.
	\end{equation} 
	If $ab \leq \exp(a^{1/\kappa} \lambda)$, then the claim is clear. Hence, we assume that $ab > \exp(a^{1/\kappa} \lambda)$, with a goal of showing that $ab < C(\kappa, \lambda) b \log^{\kappa}(e + b)$. 
	
	By combining this assumption with \eqref{eq:exp_taylor_est}, we have
	\[
	a = a^{-1} a^2 \leq a^{-1} \left( \frac{\exp(a^{1/\kappa} \lambda)}{A\lambda^{2\kappa}}  \right) < a^{-1} \left( \frac{ab}{A\lambda^{2\kappa}} \right) = \frac{b}{A\lambda^{2\kappa}} .
	\]
	Consequently, we have
	\[
	\exp(a^{1/\kappa} \lambda) < ab < \frac{b^2}{A\lambda^{2\kappa}} < \frac{(e + b)^2}{A\lambda^{2\kappa}}.
	\]
	Taking logarithms yields
	\[
	a^{1/\kappa} \lambda < \log \frac{(e + b)^2}{A\lambda^{2\kappa}} = \log \frac{1}{A\lambda^{2\kappa}} + 2 \log (e + b).
	\]
	In particular
	\[
	a < \abs{ \frac{1}{\lambda} \log \frac{1}{A\lambda^{2\kappa}} + \frac{2}{\lambda} \log (e + b) }^{\kappa}
	\leq \frac{2^\kappa}{\lambda^\kappa} \abs{\log \frac{1}{A\lambda^{2\kappa}}}^\kappa + \frac{4^\kappa}{\lambda^\kappa} \log^\kappa (e + b).
	\]
	And hence, we obtain the desired estimate
	\begin{align*}
		ab &< \left( \frac{2^\kappa}{\lambda^\kappa} \abs{\log \frac{1}{A\lambda^{2\kappa}}}^\kappa \right) b + \left(\frac{4^\kappa}{\lambda^\kappa}\right) b \log^\kappa(e+b)\\
		&\leq \left( \frac{2^\kappa}{\lambda^\kappa} \abs{\log \frac{1}{A\lambda^{2\kappa}}}^\kappa + \frac{4^\kappa}{\lambda^\kappa} \right) b \log^\kappa (e+b).
	\end{align*}
\end{proof}
\begin{proof}[Proof of Lemma \ref{lem:product_int_lemma}]
	We first observe that $fg \in L^1_\loc(\Omega)$. Indeed, Lemma \ref{lem:exp_Young} yields that $fg \leq \exp(\lambda g^{1/\nu}) +  C f \log^{\nu}(e + f)$, where both terms on the right hand side are integrable by $\nu \leq \mu$.
	
	We then further estimate using Lemma \ref{lem:exp_Young}
	\begin{multline}\label{eq:sigma_tilde_est}
		fg \log^{\mu - \nu} (e + fg) \\
		\leq \exp\left(2^{-1} \lambda g^\frac{1}{\nu} \right) \log^{\mu - \nu} (e + fg) + C_1 f \log^{\nu} (e + f) \log^{\mu - \nu} (e + fg).
	\end{multline}
	We have $\exp\left(2^{-1} \lambda g^\frac{1}{\nu} \right) \in L^2_\loc(\Omega)$, and also $\log^{\mu} (e + f) \in L^2_\loc(\Omega)$. It follows that the first term on the right hand side of \eqref{eq:sigma_tilde_est} is locally integrable. For the second term, we estimate $e + fg \leq (e + f) (e + g)$, and hence
	\begin{multline}\label{eq:sigma_tilde_est_2}
		f \log^{\nu} (e + f) \log^{\mu - \nu} (e + fg)\\
		\leq C_2 \left( f \log^{\mu} (e + f) + f \log^{\nu} (e + f) \log^{\mu - \nu} (e + g) \right).
	\end{multline}
	The first term on the right hand side of \eqref{eq:sigma_tilde_est_2} is locally integrable by assumption. For the second term, we again use Lemma \ref{lem:exp_Young}, this time with $\kappa = \mu-\nu$ and $\lambda = 1$. We get
	\begin{align*}
		&f \log^{\nu} (e + f) \log^{\mu - \nu} (e + g)\\
		&\quad \leq  e + g + C_3 f \log^{\nu} (e + f) \log^{\mu-\nu}(e + f \log^{\mu-\nu}(e + f))\\
		&\quad \leq e + g + C_4 f \log^{\mu} (e + f)\\
		&\qquad\qquad + C_4 f \log^{\nu} (e + f) \log^{\mu-\nu}(e + \log^{\mu-\nu}(e + f)), 
	\end{align*}
	where the right hand side is locally integrable by the local integrability of $g$ and $f \log^{\mu} (e + f)$. Hence, the claim follows.
\end{proof}

We're now ready to prove Theorem \ref{thm:K_bounded_y_0}. We again recall the statement for convenience.

\begin{customthm}{\ref{thm:K_bounded_y_0}}
	Let $\Omega$ be a domain in $\R^n$. Suppose that a Sobolev mapping $f \in W_{\loc}^{1,n} (\Omega, \R^n)$ satisfies $Df \in \cM_n(K, \Sigma, y_0)$ with $K \colon \Omega \to [1, \infty)$, $\Sigma \colon \Omega \to [0, \infty)$ and $y_0 \in \R^n$.  If 
	\begin{align*}
		K \in L_{\loc}^\infty (\Omega) \qquad \text{and} \qquad \Sigma \log^{\mu} \left( e + \Sigma \right) \in L^1_\loc(\Omega),
	\end{align*}
	for some $\mu > n-1$, then $f$ has a continuous representative. 
\end{customthm}

\begin{proof}
	Let $q = \mu - (n-1) > 0$. By slightly shrinking $\mu$, we may assume that $n-1$ is not an integer multiple of $q$. By our assumption, we have $\abs{Df}^n \leq K J_f + \Sigma'$, where $\Sigma' = \Sigma \abs{f - y_0}^n$.
	
	By the Moser-Trudinger inequality (case $q = 0$ of Theorem \ref{thm:Zygmund_Sobolev_embedding}), there exists $\lambda_0 > 0$ such that
	\[
		\exp\left(\lambda_0 \abs{f-y_0}^{\frac{n}{n-1}}\right) \in L^1_\loc(\Omega).
	\]
	Combining this with our assumption that $\Sigma \log^{\mu} (e + \Sigma) \in L^1_\loc(\Omega)$ and recalling that $q = \mu - (n-1)$, we can thus use Lemma \ref{lem:product_int_lemma} to conclude that
	\[
		\Sigma' \log^q (e + \Sigma') \in L^1_\loc(\Omega).
	\]
	Using Theorem \ref{thm:bounded_K_higher_int}, we hence conclude that
	\[
		\abs{Df}^n \log^q (e + \abs{Df}) \in L^1_\loc(\Omega).
	\]
	
	If $q > n-1$, we are now done, since Theorem \ref{thm:Zygmund_Morrey} implies that $f$ has a continuous representative. Otherwise, we proceed to iterate this argument. Indeed, since $\abs{Df}^n \log^q (e + \abs{Df}) \in L^1_\loc(\Omega)$, Theorem \ref{thm:Zygmund_Sobolev_embedding} yields us a slightly better estimate 
	\[
		\exp\left(\lambda_1 \abs{f-y_0}^{\frac{n}{(n-1) - q}}\right) 
		\in L^1_\loc(\Omega)
	\]
	for some $\lambda_1 >0$. Lemma \ref{lem:product_int_lemma} then yields that
	\[
		\Sigma' \log^{2q} (e + \Sigma') 
		= \Sigma' \log^{\mu - ((n-1) - q)} (e + \Sigma') 
		\in L^1_\loc(\Omega),
	\]
	from which we get that
	\[
		\abs{Df}^n \log^{2q} (e + \abs{Df}) \in L^1_\loc(\Omega).
	\]
	Then next iteration of this argument then yields $\abs{Df}^n \log^{3q} (e + \abs{Df}) \in L^1_\loc(\Omega)$, the next iteration after that yields $\abs{Df}^n \log^{4q} (e + \abs{Df}) \in L^1_\loc(\Omega)$, et cetera.
	
	We may continue this iteration until $\abs{Df}^n \log^{kq} (e + \abs{Df}) \in L^1_\loc(\Omega)$, where $k$ is the smallest positive integer such that $kq > n-1$. Indeed, we assumed $n-1$ not to be an integer multiple of $q$, so $(k-1)q$ is a valid exponent for Theorem \ref{thm:Zygmund_Sobolev_embedding}. Moreover, we also must have $kq < \mu$, since $kq = (k-1)q + q < (n-1) + q = \mu$. Hence, it follows that $f$ has a continuous representative by Theorem \ref{thm:Zygmund_Morrey}.
\end{proof}

\section{Direct continuity results}

In this section, we prove Theorems \ref{thm:exp_cont_modulus} and \ref{thm:MFDvalue_cont}. The method is a generalization of the approach used in \cite[Section 3]{Kangasniemi-Onninen_Heterogeneous}. In particular, we prove a decay estimate for the integral of $\abs{Df}^n$ over balls, which then implies continuity by using a chain of balls argument as in \cite{Hajlasz-Koskela-SobolevmetPoincare}.

We begin by recalling an estimate that is used in the proofs of similar continuity results for mappings of finite distortion; see e.g.\ \cite[Section 3]{Onninen-Zhong_MFD-mod-of-continuity} or \cite[Theorem 5.18]{Hencl-Koskela-book}. We give the proof for the convenience of the reader.

\begin{lemma}\label{lem:triple_Jensen_K_estimate}
	Let $\Omega \subset \R^n$ be a domain, and let $\exp(K) \in L^\lambda(\Omega)$ with $\lambda > 0$. Then there exist constants $C = C(\Omega, K, \lambda) > 0$ and $R_0 = R_0(\Omega, K, \lambda)$ as follows: if $x \in \Omega$, $R < \min(R_0, d(x, \partial \Omega))$ and $r \in (0, R/e^3)$, then
	\[
	\int_r^R s^{-1} \left( \dashint_{\partial \B^n(x, s)} K^{n-1} \right)^{-\frac{1}{n-1}} \dd s
	\geq \frac{\lambda}{n} \left( \log \log \frac{C}{r^n} - \log \log \frac{Ce^2}{R^n} \right).
	\]
\end{lemma}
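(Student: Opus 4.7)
The plan is to combine a Moser-type pointwise bound on the spherical mean $\phi(s) := \dashint_{\partial \B^n(x,s)} K^{n-1}$ with a Chebyshev-type level-set argument exploiting the hypothesis $\exp(\lambda K) \in L^1_\loc(\Omega)$, converting a large-scale integral constraint into a double-logarithmic lower bound via an explicit logarithmic substitution. First I would prove the pointwise estimate
\[
\phi(s)^{-1/(n-1)} \geq \frac{\lambda}{\log G(s) + n - 1},
\qquad G(s) := \dashint_{\partial \B^n(x,s)} \exp(\lambda K),
\]
using the power-mean inequality $\phi(s)^{1/(n-1)} \leq (\dashint K^p)^{1/p}$ for $p \geq n-1$, combined with the elementary bound $t^p \leq (p/(\lambda e))^p e^{\lambda t}$ obtained by maximising $t \mapsto t^p e^{-\lambda t}$. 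Optimising the resulting bound $\phi(s)^{1/(n-1)} \leq p G(s)^{1/p}/(\lambda e)$ at the critical exponent $p = \log G(s)$ (using $G(s)^{1/\log G(s)} = e$) yields the sharp constant $\log G(s)/\lambda$ whenever $\log G(s) \geq n-1$; the residual case $\log G(s) < n-1$ is handled by $p = n-1$, giving the uniform bound above.

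Next I would substitute $u = \log(1/s)$ and set $\tilde G(u) := G(e^{-u})$, so that the remaining task reduces to showing
\[
\int_V^U \frac{\dd u}{\log \tilde G(u) + n-1} \geq \frac{1}{n}\left(\log\log\frac{C}{r^n} - \log\log\frac{Ce^2}{R^n}\right),
\]
with $V := \log(1/R)$ and $U := \log(1/r)$. In these variables the hypothesis on $\exp(\lambda K)$ becomes $\int_V^U e^{-nu}\tilde G(u)\,\dd u \leq N_1$, where $N_1 := \omega_{n-1}^{-1}\|\exp(\lambda K)\|_{L^1(\B^n(x,R))}$. For a threshold $a > 0$ to be tuned, I would introduce the bad set $B := \{u \in (V,U) : e^{-nu} \tilde G(u) > N_1 e^a\}$, which Chebyshev's inequality confines to measure $|B| \leq e^{-a}$; on the good set $B^c$ one has the pointwise upper bound $\log \tilde G(u) + n - 1 \leq nu + \log C$, where $C := N_1 e^{a + n - 1}$ plays the role of the constant in the statement.

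The final step evaluates the elementary antiderivative $\int (nu + \log C)^{-1}\,\dd u = n^{-1}\log(nu + \log C)$ over $(V, U)$, and bounds the loss over $B$ by $e^{-a}/\log(C/R^n)$, yielding a lower bound of the form $(1/n)[\log\log(C/r^n) - \log\log(C/R^n)] - e^{-a}/\log(C/R^n)$. The main obstacle is then to absorb this bad-set error into the $e^2$ cushion appearing in the target, i.e.\ to show
\[
\frac{1}{n}\log\bigl(1 + 2/L\bigr) \geq \frac{e^{-a}}{L},\qquad L := \log(C/R^n).
\]
I would verify this by choosing $a = \log n$ and checking the elementary inequality $\log(1 + 2/L) \geq 1/L$ for $L \geq 2$, which is a one-line calculus exercise on $f(L) = \log(1+2/L) - 1/L$. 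This choice fixes the threshold $R_0 = (C/e^2)^{1/n}$, and the additional hypothesis $r < R/e^3$ enters only to ensure that the target difference $\log\log(C/r^n) - \log\log(Ce^2/R^n)$ is strictly positive.
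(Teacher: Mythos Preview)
Your argument is correct, with one fixable slip: defining $N_1$ as an integral over $\B^n(x,R)$ makes your constant $C = N_1 e^{a+n-1}$, and hence your threshold $R_0 = (C/e^2)^{1/n}$, depend on $x$ and $R$, which is circular. Replace it by $N_1 := \omega_{n-1}^{-1}\|\exp(\lambda K)\|_{L^1(\Omega)}$ and everything goes through.

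The route, however, is genuinely different from the paper's. The paper partitions $[\log r,\log R]$ into unit subintervals and on each piece applies Jensen's inequality three times: first with $\tau \mapsto \tau^{-1}$ (harmonic versus arithmetic mean), then with $\tau \mapsto \exp(\lambda\tau^{1/(n-1)})$ after truncating $K$ from below by $(n-2)/\lambda$ to force convexity, and finally with $\exp$ to pull the logarithm outside the $t$-integral; the resulting terms are summed and compared to a Riemann sum for $\int s^{-1}\log^{-1}(C/s^n)\,ds$, with the $e^2$ cushion arising from the endpoint loss in that comparison. Your exponent-optimization argument replaces the middle Jensen step and sidesteps the convexity truncation entirely, while your Chebyshev bad-set argument replaces the other two Jensen steps together with the dyadic summation. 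The paper's version is tidier in that no error term needs to be reabsorbed into the $e^2$; yours is more direct, avoiding both the discretization and the auxiliary $\tilde K = \max(K,(n-2)/\lambda)$.
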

\begin{proof}
	We denote $B_s = \B^n(x, s)$. We let $k$ be the largest integer such that $r e^k \leq R$. Since $r < R/e^3$, we must have $k \geq 3$. We define the function $\tilde{K} = \max(K, (n-2)\lambda^{-1})$, where we still have $\exp(\tilde{K}) \in L^\lambda_\loc(\Omega)$. We estimate $K \leq \tilde{K}$, perform a change of variables, and split the integral into a sum as follows:
	\begin{multline*}
		\int_r^R s^{-1} \left( \dashint_{\partial B_s} K^{n-1} \right)^{-\frac{1}{n-1}} \dd s
		\geq \sum_{i = 1}^{k-1} \int_{i + \log r}^{i + 1 + \log r} \left( \dashint_{\partial B_{e^t}} \tilde{K}^{n-1} \right)^{-\frac{1}{n-1}} \dd t.
	\end{multline*}
	
	We then use Jensen's inequality a total of three times, with the convex functions $\tau \mapsto \tau^{-1}$, $\tau \mapsto \exp(\lambda\tau^{\frac{1}{n-1}})$ and $\tau \mapsto \exp(\tau)$. Note that $\tau \mapsto \exp(\lambda\tau^{\frac{1}{n-1}})$ is only convex for $\tau \geq ((n-2)\lambda^{-1})^{n-1}$, but the range of $\tilde{K}^{n-1}$ is in this region. The resulting estimate is
	\begin{multline*}
		\int_{i + \log r}^{i + 1 + \log r} \left( \dashint_{\partial B_{e^t}} \tilde{K}^{n-1} \right)^{-\frac{1}{n-1}} \dd t
		\geq 
		\left(\int_{i + \log r}^{i + 1 + \log r} \left( \dashint_{\partial B_{e^t}} \tilde{K}^{n-1} \right)^{\frac{1}{n-1}} \dd t \right)^{-1}\\
		\geq 
		\lambda \left(\int_{i + \log r}^{i + 1 + \log r} \log \left( \dashint_{\partial B_{e^t}} \exp(\lambda \tilde{K}) \right) \dd t \right)^{-1}\\
		\geq \lambda \log^{-1} \int_{i + \log r}^{i + 1 + \log r}  \left(\dashint_{\partial B_{e^t}} \exp(\lambda \tilde{K})\right) \dd t	
	\end{multline*}
	Now we may estimate
	\begin{multline*}
		\log^{-1} \int_{i + \log r}^{i + 1 + \log r}  \left(\dashint_{\partial B_{e^t}} \exp(\lambda \tilde{K})\right) \dd t
		= \log^{-1} \int_{re^{i}}^{re^{i+1}} 
		\left( \dashint_{\partial B_{s}} \exp(\lambda \tilde{K}) \right) \frac{\dd s}{s}\\
		= \log^{-1} \int_{re^{i}}^{re^{i+1}} \frac{1}{\omega_{n-1} s^{n}} \left( \int_{\partial B_{s}} \exp(\lambda \tilde{K}) \right) \dd s
		\geq \log^{-1} \frac{\smallnorm{\exp(\lambda \tilde{K})}_{L^1(\Omega)}}{\omega_{n-1} (re^{i})^{n}}.
	\end{multline*}
	We then select $C = \smallnorm{\exp(\lambda \tilde{K})}_{L^1(\Omega)}/ \omega_{n-1}$. The sum of the above terms over $i$ can now be estimated by
	\begin{multline*}
		\sum_{i = 1}^{k-1} \int_{i + \log r}^{i + 1 + \log r} \left( \dashint_{\partial B_{e^t}} \tilde{K}^{n-1} \right)^{-\frac{1}{n-1}} \dd t
		\geq 
		\lambda \sum_{i = 1}^{k-1} \log^{-1} \frac{C}{(re^{i})^{n}}\\
		\geq
		\lambda \int_{0}^{k-1} \log^{-1} \frac{C}{(re^{t})^{n}} \dd t 
		\geq 
		\lambda \int_{r}^{R/e^2} s^{-1} \log^{-1} \frac{C}{s^{n}} \dd s\\
		= \frac{\lambda}{n}\left( \log \log \frac{C}{r^{n}} - \log \log \frac{Ce^2}{R^{n}} \right).
	\end{multline*}
	The claim hence holds, assuming that $\log \log (Ce^2/R^{n})$ is well defined; this is the case if we select $R_0^n = Ce$.
\end{proof}

We then consider the following abstract differential inequality of real functions, and show that it yields a decay condition. This is a more general version of \cite[Lemma 3.2]{Kangasniemi-Onninen_Heterogeneous}, which is essentially given by the case $\Psi(r) = r$ and $\Gamma(r) = C r^\alpha$.

\begin{lemma}\label{lem:generalized_diff_ineq}
	Let $A > 0$, and let $\Phi \colon [0, R] \to [0, S]$, $\Psi \colon [0, R] \to [0, \infty)$, and $\Gamma \colon [0, R] \to [0, \infty)$ be absolutely continuous increasing functions such that $\Phi(0) = 0$. Suppose that
	\[
	\Phi(r) \leq A \frac{\Psi(r)}{\Psi'(r)} \Phi'(r) + \Gamma(r)
	\]
	for a.e.\ $r \in (0, R)$, where $A > 0$. Then there exists a constant $C = C(A, R, S, \Psi, \Gamma) \geq 0$ such that we have
	\[
	\Phi(r) \leq \Gamma(r) + C \Psi^{A^{-1}}(r) \left( 1 + \int_r^R \frac{\Gamma'(s)}{\Psi^{A^{-1}}(s)} \dd s\right)
	\]
	for all $r \in [0, R]$.
\end{lemma}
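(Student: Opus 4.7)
The idea is to substitute $\psi(r) = \Psi(r)^{1/A}$, which turns the awkward quotient $\Psi/\Psi'$ in the hypothesis into a transparent logarithmic derivative. A direct computation gives $\psi'/\psi = \Psi'/(A\Psi)$ at points where $\Psi > 0$, so the assumed inequality is equivalent to
\[
	\psi'(r)\bigl(\Phi(r) - \Gamma(r)\bigr) \leq \psi(r)\,\Phi'(r) \qquad \text{for a.e.\ } r \in (0, R).
\]
This reformulation is the key: the right-hand side is exactly what the quotient rule applied to $\Phi/\psi$ produces.

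The second step would be to apply the quotient rule, obtaining
\[
	\left( \frac{\Phi}{\psi} \right)'(r) = \frac{\Phi'(r)\psi(r) - \Phi(r)\psi'(r)}{\psi(r)^2} \geq -\frac{\psi'(r)\,\Gamma(r)}{\psi(r)^2}
\]
a.e.\ on $(0, R)$. On any subinterval $[r, R] \subset (0, R]$, $\psi$ is bounded below away from $0$ and hence $\Phi/\psi$ is absolutely continuous, so integration from $r$ to $R$ yields
\[
	\frac{\Phi(r)}{\psi(r)} \leq \frac{\Phi(R)}{\psi(R)} + \int_r^R \frac{\psi'(s)\,\Gamma(s)}{\psi(s)^2} \dd s.
\]
Rewriting the integrand as $-\Gamma(s)\bigl(1/\psi(s)\bigr)'$ and integrating by parts replaces the last integral with $\Gamma(r)/\psi(r) - \Gamma(R)/\psi(R) + \int_r^R \Gamma'(s)/\psi(s) \dd s$. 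Multiplying through by $\psi(r)$, invoking $\Phi(R) \leq S$, and discarding the nonpositive $-\Gamma(R)\psi(r)/\psi(R)$ term produces
\[
	\Phi(r) \leq \Gamma(r) + \psi(r) \left( \frac{S}{\psi(R)} + \int_r^R \frac{\Gamma'(s)}{\psi(s)} \dd s \right),
\]
which is precisely the claim with $C = \max\bigl(1, S/\psi(R)\bigr)$, after substituting back $\psi = \Psi^{1/A}$.

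The main obstacle I anticipate is degenerate behavior of $\psi$ at the left endpoint. If $\Psi$ (and hence $\psi$) vanishes on an initial segment $[0, r_0]$, then the hypothesis is vacuous there and the estimate must instead be extracted from $\Phi(0) = 0$, monotonicity of $\Phi$ and $\Gamma$, and the freedom to enlarge $C$ so that $C\psi(r_0^+)$ dominates $S$; at points where $\Psi' = 0$ the hypothesis is trivially satisfied and those points contribute nothing to the integrals above. Beyond this, one only needs the routine verification that $\Phi/\psi$ is absolutely continuous on compact subintervals of $(0, R]$ so that the fundamental theorem of calculus is legitimate, which follows from absolute continuity of $\Phi$ and $\psi$ combined with a positive lower bound on $\psi$ on such subintervals.
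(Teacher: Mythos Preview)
Your proposal is correct and is essentially the same argument as the paper's: the paper phrases the computation as ``finding an integrating factor'' by differentiating $\Psi^{-A^{-1}}(s)\Phi(s)$, which is precisely your quotient $\Phi/\psi$ with $\psi=\Psi^{1/A}$, and then integrates over $[r,R]$ and integrates by parts exactly as you do, arriving at the same constant $C=\max\bigl(1,(S-\Gamma(R))/\Psi^{A^{-1}}(R)\bigr)$. Your discussion of the degeneracies where $\Psi$ or $\Psi'$ vanish is in fact more careful than the paper's own treatment.
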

\begin{proof}
	We find an integrating factor for the terms involving $\Phi$:
	\begin{align*}
		-\frac{d}{ds} (\Psi^{-A^{-1}}(s) \Phi(s))
		&= \left( \Phi(s) - A \frac{\Psi(s)}{\Psi'(s)} \Phi'(s) \right) \left( A^{-1} \Psi^{-A^{-1} - 1}(s) \Psi'(s)\right)\\
		&\leq - \Gamma(s) \frac{d}{ds} \Psi^{-A^{-1}}(s).
	\end{align*}
	We then integrate on both sides over $[r, R]$, and use integration by parts:
	\begin{multline*}
		\Psi^{-A^{-1}}(r) \Phi(r) - \Psi^{-A^{-1}}(R) \Phi(R)
		\leq - \int_r^R \Gamma(s) \left(\frac{d}{ds} \Psi^{-A^{-1}}(s)\right) \dd s\\
		= \Gamma(r) \Psi^{-A^{-1}}(r) - \Gamma(R) \Psi^{-A^{-1}}(R) + \int_r^R \frac{\Gamma'(s)}{\Psi^{A^{-1}}(s)} \dd s.
	\end{multline*}
	Multiplying by $\Psi^{A^{-1}}(r)$ and moving the negative term to the right hand side yields the desired
	\[
	\Phi(r) \leq \Gamma(r) + \frac{S - \Gamma(R)}{\Psi^{A^{-1}}(R)} \Psi^{A^{-1}}(r) +  \Psi^{A^{-1}}(r) \int_r^R \frac{\Gamma'(s)}{\Psi^{A^{-1}}(s)}  \dd s.
	\]
\end{proof}

Combining Lemmas \ref{lem:triple_Jensen_K_estimate} and \ref{lem:generalized_diff_ineq} allows us to show the following decay estimate.

\begin{lemma}\label{lem:DfperK_decay}
	Let $\Omega \subset \R^n$ be a connected domain. Let $f \in W^{1,n}(\Omega, \R^n)$ and $Df \in \mathcal M_n (K, \Sigma)$ with $\exp(\lambda K) \in L^1(\Omega)$ and $\Sigma \log^{\mu} \left( e + \Sigma \right) \in L^1(\Omega)$, where $\mu > \lambda > 0$. Then there exists $R_0 = R_0(\Omega, \lambda, K) > 0$ as follows: for any choice of $x \in \Omega$ and $R \in (0, R_0)$ such that $\B^n(x, R) \subset \Omega$, we have 
	\[
	\int_{\B^n(x, r)} \frac{\abs{Df}^n}{K}
	\leq C \log^{-\lambda} r^{-1}.
	\] 
	for all $r \in (0, R/e^3)$, where $C = C(\Omega, \lambda, \mu, K, \Sigma, R, \norm{Df}_{L^n(\Omega)})$. Notably, $C$ is independent of $x$.
\end{lemma}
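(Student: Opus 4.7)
The plan is to follow and extend the scheme of \cite[Section 3]{Kangasniemi-Onninen_Heterogeneous}: derive a differential inequality for the averaged quantity $\Phi(r) := \int_{\B^n(x,r)} \abs{Df}^n/K$ that matches the abstract form of Lemma~\ref{lem:generalized_diff_ineq}, and then combine its output with Lemma~\ref{lem:triple_Jensen_K_estimate} to turn the abstract decay into the stated logarithmic bound.

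Fix $x \in \Omega$ and $R > 0$ with $\B^n(x,R) \subset \Omega$. Since $\abs{Df}^n/K \le J_f + \Sigma/K \in L^1(\Omega)$, the function $\Phi$ is absolutely continuous on $[0,R]$ with $\Phi'(r) = \int_{\partial \B^n(x,r)} \abs{Df}^n/K$ for a.e.\ $r$. Integrating the distortion inequality over $\B^n(x,r)$ yields
\[
\Phi(r) \le \int_{\B^n(x,r)} J_f + \Gamma_0(r), \qquad \Gamma_0(r) := \int_{\B^n(x,r)} \frac{\Sigma}{K}.
\]
For the Jacobian term, I would use a Hadamard-type bound obtained via Stokes' theorem applied to $f^{*}\omega$ for the standard $(n-1)$-form $\omega$ on $\R^n$, giving $\int_{\B^n(x,r)} J_f \le n^{-1}\osc(f,\partial \B^n(x,r))\int_{\partial \B^n(x,r)}\abs{Df}^{n-1}$. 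Factoring $\abs{Df}^{n-1} = (\abs{Df}^n/K)^{(n-1)/n} K^{(n-1)/n}$ and applying Hölder on the sphere produces a $\Phi'(r)^{(n-1)/n}$ factor with a $K^{n-1}$-weight on the boundary, while the oscillation is handled by the $(n-1)$-dimensional Morrey inequality combined with a further Hölder decomposition. After absorbing $\Sigma$-cross-terms via a Young-type inequality pairing the hypotheses $\exp(\lambda K)\in L^1$ and $\Sigma\log^\mu(e+\Sigma)\in L^1$, the aim is the differential inequality
\[
\Phi(r) \le A\,r\,\Bigl(\dashint_{\partial \B^n(x,r)} K^{n-1}\Bigr)^{1/(n-1)} \Phi'(r) + \Gamma(r),
\]
with constant $A = A(n) \ge 1$ and an absolutely continuous error $\Gamma$ built from $\Gamma_0$ and $\Sigma$-tail contributions.

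Setting $\Psi'(r)/\Psi(r) := (Ar)^{-1}(\dashint_{\partial \B^n(x,r)} K^{n-1})^{-1/(n-1)}$ places this in the framework of Lemma~\ref{lem:generalized_diff_ineq}, producing $\Phi(r) \le \Gamma(r) + C\Psi^{1/A}(r)\bigl(1 + \int_r^R \Gamma'(s)\Psi^{-1/A}(s)\,\dd s\bigr)$. By Lemma~\ref{lem:triple_Jensen_K_estimate} applied to $\int_r^R \Psi'/\Psi$, one obtains $\log(\Psi(R)/\Psi(r)) \ge (\lambda/n)\bigl(\log\log(C/r^n) - \log\log(Ce^2/R^n)\bigr)$, which upon exponentiating and raising to the power $1/A$ yields $\Psi^{1/A}(r) \le C\log^{-\lambda}(1/r)$, provided the constants along the Hadamard–Hölder chain are tracked so the exponent comes out cleanly. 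The remaining integral $\int_r^R \Gamma'\Psi^{-1/A}\,\dd s$ is finite by the gap $\mu>\lambda$: this margin gives just enough room to pair $\Sigma\log^\mu(e+\Sigma)$ against the weight $\Psi^{-1/A}(s) \sim \log^{\lambda}(1/s)$ through a Young-type inequality.

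The principal obstacle is the sphere-level estimate: arranging the Hadamard-plus-Hölder-plus-Morrey chain so as to isolate precisely the coefficient $r(\dashint_{\partial \B^n(x,r)} K^{n-1})^{1/(n-1)}$ in front of $\Phi'(r)$, while producing an error whose derivative is absolutely integrable against the $\Psi^{-1/A}$-weight. The matching of constants here is what ultimately forces the exponent in the final estimate to be exactly $\lambda$ (rather than something smaller), so this step cannot be relegated to routine bookkeeping.
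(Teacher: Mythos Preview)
Your overall architecture matches the paper: derive a differential inequality for $\Phi$, feed it into Lemma~\ref{lem:generalized_diff_ineq}, and close with Lemma~\ref{lem:triple_Jensen_K_estimate}. The difficulty is in the two sub-estimates, and in one of them your sketch does not close.

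For the Jacobian, the paper does not go through a Hadamard--oscillation--Morrey chain. It invokes the isoperimetric inequality for Sobolev maps directly,
\[
\int_{B_r} J_f \le \frac{1}{n\,\omega_{n-1}^{1/(n-1)}}\Bigl(\int_{\partial B_r}\abs{Df}^{n-1}\Bigr)^{n/(n-1)},
\]
and then a single H\"older step with $\abs{Df}^{n-1}=(\abs{Df}^n/K)^{(n-1)/n}K^{(n-1)/n}$ gives exactly $\int_{B_r} J_f \le (r/n)\bigl(\dashint_{\partial B_r}K^{n-1}\bigr)^{1/(n-1)}\Phi'(r)$. So $A=1/n$ on the nose; after Lemma~\ref{lem:generalized_diff_ineq} the resulting $\Psi^{n}$ pairs with the factor $\lambda/n$ in Lemma~\ref{lem:triple_Jensen_K_estimate} to produce the exponent $\lambda$ with no bookkeeping. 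This dissolves what you flag as the principal obstacle. Your route, by contrast, does not close: bounding $\osc(f,\partial B_r)$ by Morrey on the $(n-1)$-sphere at exponent $n-1$ is the critical case and gives no oscillation control, while using the available exponent $n$ produces a factor $(\int_{\partial B_r}\abs{Df}^n)^{1/n}$ that cannot be H\"older-split into $\Phi'(r)$ and powers of $\int_{\partial B_r}K^{n-1}$ when all you have on the sphere is $\abs{Df}^n/K\in L^1$.

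For the $\Sigma$-term there are no cross-terms and no need to invoke $\exp(\lambda K)$: use $K\ge 1$ to get $\int_{B_r}\Sigma/K\le\int_{B_r}\Sigma$, then split at the level $\{\Sigma\le r^{-1}\}$ to obtain $\Gamma(r)\le C_1\log^{-\mu}(1/r)$ directly. The gap $\mu>\lambda$ enters only at the end, to bound $\int_r^R\Gamma'(s)\Psi^{-n}(s)\,\dd s$ by $C\log^{-\lambda}(1/r)$; the paper does this by splitting the range at $s=e^3r$ and reapplying Lemma~\ref{lem:triple_Jensen_K_estimate} on the outer piece.
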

\begin{proof}
	We choose $R_0 < e^{-1}$ such that Lemma \ref{lem:triple_Jensen_K_estimate} holds for $R < R_0$: note that this choice depends only on $\Omega$, $\lambda$, and $K$. We fix a point $x \in \Omega$ and a radius $R < R_0$ such that $\B^n(x, R) \subset \Omega$, and we denote $B_r = \B^n(x, r)$ for all $r \in [0, R]$. We then define a function $\Phi \colon [0, R] \to [0, \infty)$ by
	\[
	\Phi(r) = \int_{B_r} \frac{\abs{Df}^n}{K}.
	\]
	By using the definition of $\mathcal{M}_n(K, \Sigma)$, we may estimate $\Phi(r)$ by
	\[
	\int_{B_r} \frac{\abs{Df}^n}{K} \leq \int_{B_r} J_f + \int_{B_r} \frac{\Sigma}{K}.
	\]
	
	We apply the isoperimetric inequality for Sobolev maps on the first term, followed by a use of H\"older's inequality. The result is
	\begin{align*}
		\int_{B_r} J_f 
		&\leq \frac{1}{n\sqrt[n-1]{\omega_{n-1}}} \left( \int_{\partial B_r} \abs{Df}^{n-1} \right)^\frac{n}{n-1}\\
		&\leq \frac{1}{n\sqrt[n-1]{\omega_{n-1}}} \left( \int_{\partial B_r} K^{n-1} \right)^\frac{1}{n-1} \int_{\partial B_r} \frac{\abs{Df}^n}{K}\\
		&= \frac{r}{n} \left( \dashint_{\partial B_r} K^{n-1} \right)^\frac{1}{n-1} \int_{\partial B_r} \frac{\abs{Df}^n}{K}.
	\end{align*}
	For the other term, using $K \geq 1$, $\Sigma \log^{\mu}_{+} \left( \Sigma \right) \in L^1(\Omega)$, and $r \leq R < R_0 < e^{-1}$, we estimate that
	\begin{multline*}
		\int_{B_r} \frac{\Sigma}{K} \leq \int_{B_r} \Sigma
		\leq \int_{\{z \in B_r : \Sigma(z) \leq r^{-1}\}} \Sigma
		+ \int_{\{z \in B_r : \Sigma(z) > r^{-1}\}} \Sigma\\
		\leq \int_{\{z \in B_r : \Sigma(z) \leq r^{-1}\}} r^{-1}
		+ \int_{\{z \in B_r : \Sigma(z) > r^{-1}\}} \frac{\Sigma \log^\mu \Sigma}{\log^\mu r^{-1}}\\
		\leq \frac{\vol_n(B_r)}{r} + \log^\mu r^{-1} \int_{B_r} \Sigma \log^{-\mu} (e + \Sigma)
		\leq C_1 \log^{-\mu} r^{-1}
	\end{multline*}
	for some $C_1 = C_1(n, \mu, \Sigma, R) \geq 0$. In conclusion, we have
	\begin{equation}\label{eq:before_Psi}
		\Phi(r) \leq \frac{r}{n} \left( \dashint_{\partial B_r} K^{n-1} \right)^\frac{1}{n-1} \Phi'(r) + C_1\log^{-\mu} r^{-1}
	\end{equation}
	for all $r \in (0, R)$.
	
	We then define
	\[
	\Psi(r) = \exp \left( - \int_r^R s^{-1} \left( \dashint_{\partial B_s} K^{n-1} \right)^{-\frac{1}{n-1}} \dd s \right).
	\]
	A simple computation by chain rule hence reveals that
	\[
	\Psi'(r) = \Psi(r) r^{-1} \left( \dashint_{\partial B_r} K^{n-1} \right)^{-\frac{1}{n-1}}
	\]
	for all $r \in (0, R)$. In particular, \eqref{eq:before_Psi} now reads as
	\begin{equation*}
		\Phi(r) \leq \frac{1}{n} \frac{\Psi(r)}{\Psi'(r)}\Phi'(r) + C_1 \log^{-\mu} r^{-1}.
	\end{equation*}
	We also note that since $K \geq 1$, we have $\Phi(r) \leq S$ for all $r \in [0, R]$ with $S = \norm{Df}_{L^n(\Omega)}$. Hence, we are in position to apply Lemma \ref{lem:generalized_diff_ineq}, which yields that
	\begin{equation}\label{eq:Phi_estimate}
		\Phi(r) \leq C_1\log^{-\mu} r^{-1} + C_2 \left( \Psi^n(r) + \int_r^R \frac{\Psi^n(r)}{\Psi^{n}(s)} \frac{\dd s}{s \log^{\mu + 1}(s^{-1})} \right)
	\end{equation}
	when $r \in [0, R/e^3]$, for some $C_2 = C_2(\Omega, \lambda, \mu, K, \Sigma, R, \norm{Df}_{L^n(\Omega)})$.
	
	Lemma \ref{lem:triple_Jensen_K_estimate} yields that for $r \in (0, R/e^3)$, we have 
	\begin{multline*}
		\Psi^n(r) 
		\leq \exp \left( - \frac{n\lambda}{n} \left( \log \log \frac{C_3}{r^{n}} - \log \log \frac{C_3e^2}{R^{n}}\right) \right)\\
		= \left( \frac{\log (C_3 e^2 R^{-n})}{\log (C_3 r^{-n})} \right)^{\lambda} \leq C_4 \log^{-\lambda} r^{-1},
	\end{multline*}
	where $C_3 = C_3(\Omega, \lambda, K)$ and $C_4 = C_4(\Omega, \lambda, K, R)$. Since $\mu > \lambda$, we also have
	\[
	\log^{-\mu} r^{-1} \leq C_5 \log^{-\lambda} r^{-1}
	\]
	for all $r \in (0, R/e^3]$, where $C_5 = C_5(\mu, \lambda, R)$. Hence, in order to obtain the claimed decay estimate for $\Phi$ from \eqref{eq:Phi_estimate}, it remains to estimate the term with the integral. 
	
	For this, we let $r \in (0, R/e^3)$, and split the integral into two parts:
	\begin{multline*}
		\int_r^R \frac{\Psi^n(r)}{\Psi^{n}(s)} \frac{\dd s}{s \log^{\mu + 1}(s^{-1})}\\
		=  \int_r^{e^3 r} \frac{\Psi^n(r)}{\Psi^{n}(s)} \frac{\dd s}{s \log^{\mu + 1}(s^{-1})}
		+ \int_{e^3 r}^R \frac{\Psi^n(r)}{\Psi^{n}(s)} \frac{\dd s}{s \log^{\mu + 1}(s^{-1})}.
	\end{multline*}
	In the range of the latter integral, we have $r < s/e^3$, which allows us to use Lemma \ref{lem:triple_Jensen_K_estimate} again to estimate
	\begin{align*}
		\frac{\Psi^n(r)}{\Psi^n(s)} &= \exp \left(- n\int_r^s t^{-1} \left( \dashint_{\partial B_t} K^{n-1} \right)^{-\frac{1}{n-1}} \dd t \right)\\
		&\leq \left( \frac{\log (C_3 e^2 s^{-n})}{\log (C_3 r^{-n})} \right)^{\lambda}.
	\end{align*} 
	Hence, by using the fact that $\mu - \lambda > 0$, and the fact that $s^{-1} \log^{-1-t}(s^{-1})$ is integrable for $t > 0$, the second integral can now be estimated by
	\begin{multline*}
		\int_{e^3 r}^R \frac{\Psi^n(r)}{\Psi^{n}(s)} \frac{\dd s}{s \log^{\mu + 1}(s^{-1})}
		\leq \int_{e^3 r}^R \left( \frac{\log (C_3 e^2 s^{-n})}{\log (C_3 r^{-n})} \right)^{\lambda} \frac{\dd s}{s \log^{\mu + 1}(s^{-1})} \\
		\leq  \left(\int_0^R \frac{\dd s}{s \log^{-\lambda} (C_3 e^2 s^{-n}) \log^{\mu + 1}(s^{-1})} \right) \log^{-\lambda} \frac{C_3}{r^{n}}
		\leq C_6 \log^{-\lambda} r^{-1}
	\end{multline*}
	where $C_6 = C_6(\Omega, \lambda, \mu, K, R)$.
	On the other hand, for the first integral, we may merely use the fact that $\Psi$ is increasing to estimate that $\Psi^n(s) \geq \Psi^n(r)$, which again combined with $\mu > \lambda$ yields that
	\begin{multline*}
		\int_r^{e^3 r} \frac{\Psi^n(r)}{\Psi^{n}(s)} \frac{\dd s}{s \log^{\mu + 1}(s^{-1})}
		\leq \int_r^{e^3 r} \frac{\dd s}{s \log^{\mu + 1}(s^{-1})}\\
		= \frac{1}{\mu} \left( \log^{-\mu}\frac{1}{e^3 r} - \log^{-\mu} \frac{1}{r} \right) \leq C_7 \log^{-\lambda} r^{-1}
	\end{multline*}
	where $C_7 = C_7(\mu, \lambda, R)$. The proof of the claimed estimate is hence complete.
\end{proof}

We then proceed to prove Theorem \ref{thm:exp_cont_modulus}. We again begin by recalling the statement.

\begin{customthm}{\ref{thm:exp_cont_modulus}}
	Let $\Omega \subset \R^n$ be a  domain, and let $f \in W^{1,n}_\loc(\Omega, \R^n)$ and   $Df \in \mathcal M_n (K, \Sigma)$ with 
	\begin{align*}
		\exp (\lambda K) \in L^1_\loc(\Omega) \qquad \text{and} \qquad \Sigma \log^{\mu} \left( e + \Sigma \right) \in L^1_\loc(\Omega),
	\end{align*}
	for some $\mu > \lambda > n+1$. Then $f$ has a continuous representative. 
	
	In particular, for all $x_0 \in \Omega$ and sufficiently small $r>0$, we have the following local modulus of continuity estimate:
	\[ 
		\omega_f (x_0, r) \le C \log^{-\alpha} (1/r)  \quad \textnormal{where } \alpha =\frac{\lambda-n-1}{n}\, . 
	\]
\end{customthm}

\begin{proof}
	Fix a ball $B = \B^n(x, R)$ such that $B$ is compactly contained in $\Omega$ and $R < R_0$, with $R_0$ given by Lemma \ref{lem:DfperK_decay}. Let $A \subset B$ be the set of all Lebesgue points of $f$ in $B$. We show first that the restriction of $f$ to $A \cap \B^n(x, R/(4e^3))$ is continuous. For this, let $y, z \in A \cap \B^n(x, R/(4e^3))$.
	
	We may select a two-sided sequence balls $B_i \subset B, i \in \Z$ in the following way: $B_0 = \B^n((y+z)/2, r_0)$ with $r_0 = \abs{y-z} \in (0, R/(2e^3))$, $B_i = \B^n(y, e^{-\abs{i}}r_0)$ for $i \in \Z_{>0}$, $B_i = \B^n(z, e^{-\abs{i}}r_0)$ for $i \in \Z_{<0}$. We denote the integral average of $f$ over $B_i$ by $f_{B_i} \in \R^n$; since $y$ and $z$ are Lebesgue points, we have 
	\begin{equation}\label{eq:Lebesgue_point_convs}
		\lim_{i \to \infty} f_{B_i} = f(y), \qquad \lim_{i \to -\infty} f_{B_i} = f(z).
	\end{equation}
	Moreover, since $\B^n(y, R/2)$ and $\B^n(z, R/2)$ and $\B^n((y+z)/2, R/2)$ are all contained in $B$ and $r_i < (R/2)/e^3$, Lemma \ref{lem:DfperK_decay} yields for every $i \in \Z$ that
	\begin{equation}\label{eq:DfperK_for_sequence}
		\int_{B_i} \frac{\abs{Df}^n}{K}
		\leq C_1 \log^{-\lambda} \frac{1}{r_i} = C_1 \left( \log \frac{1}{\abs{y-z}} + \abs{i} \right)^{-\lambda},
	\end{equation}
	with $C_1 = C_1(B, \lambda, \mu, K, \Sigma, R/2, \norm{Df}_{L^n(B)})$ independent of $y$, $z$, and $i$.
	
	We then estimate $\smallabs{f_{B_{i+1}} - f_{B_i}}$. We present the case $i \geq 0$, as the case $i < 0$ is similar but with $i$ and $i-1$ switched. Since $B_{i+1} \subset B_i$ and the radius of $B_{i+1}$ is $e^{-1}$ times the radius of $B_i$, we have by the Sobolev-Poincar\'e inequality that
	\begin{multline*}
		\abs{f_{B_{i+1}} - f_{B_i}} \leq \dashint_{B_{i+1}} \abs{f - f_{B_i}} \leq e^{-n} \dashint_{B_{i}} \abs{f - f_{B_i}}\\
		\leq C_2(n) r_i \left( \dashint_{B_{i}} \abs{Df}^{n-1} \right)^\frac{1}{n-1}
	\end{multline*}
	We then use H\"older's inequality to estimate that
	\begin{align*}
		r_i \left( \dashint_{B_{r_{i}}} \abs{Df}^{n-1} \right)^{\frac{1}{n-1}}
		&\leq r_i \left( \dashint_{B_{r_{i}}} \frac{\abs{Df}^{n}}{K} \right)^{\frac{1}{n}} \left( \dashint_{B_{r_{i}}} K^{n-1} \right)^{\frac{1}{n^2 - n}}\\
		&= \frac{1}{\sqrt[n]{\omega_n}} \left( \int_{B_{r_{i}}} \frac{\abs{Df}^{n}}{K} \right)^{\frac{1}{n}} \left( \dashint_{B_{r_{i}}} K^{n-1} \right)^{\frac{1}{n^2 - n}}.
	\end{align*}
	Applying the decay estimate \eqref{eq:DfperK_for_sequence}, we hence have that
	\[
	\abs{f_{B_{i+1}} - f_{B_i}} 
	\leq C_3 \left( \dashint_{B_{r_{i}}} K^{n-1} \right)^{\frac{1}{n^2 - n}} \left(\log \frac{1}{\abs{y-z}} + \abs{i}\right)^{-\frac{\lambda}{n}},
	\]
	where $C_3 = C_3(B, \lambda, \mu, K, \Sigma, \norm{Df}_{L^n(B)})$. We then estimate the average integral term. For this, we again define $\tilde{K} = \max(K, (n-2)\lambda^{-1})$ as in Lemma \ref{lem:triple_Jensen_K_estimate}, and use Jensen's inequality with the function $\tau \mapsto \exp (\lambda \tau^\frac{1}{n-1})$. This yields the estimate
	\begin{multline}\label{eq:K_decay}
		\left( \dashint_{B_{r_{i}}} K^{n-1} \right)^{\frac{1}{n^2 - n}}
		\leq \lambda^{-\frac{1}{n}} \log^{\frac{1}{n}} \left( \dashint_{B_{r_{i}}} \exp (\lambda \tilde{K}) \right)\\
		\leq \lambda^{-\frac{1}{n}} \log^{\frac{1}{n}} \left( \frac{\smallnorm{\exp(\lambda \tilde{K})}_{L^1(B)}}{\omega_n r_i^n} \right)\\
		\leq C_4 \log^\frac{1}{n} \frac{1}{r_i}
		= C_4 \left( \log \frac{1}{\abs{y-z}} + \abs{i} \right)^\frac{1}{n},
	\end{multline}
	where $C_4 = C_4(B, \lambda, K)$.
	
	Now, by \eqref{eq:Lebesgue_point_convs} and a telescopic sum argument, we obtain that
	\begin{equation}\label{eq:telescopic_sum_estimate}
		\abs{f(y) - f(z)} \leq \sum_{i = -\infty}^\infty \abs{f_{B_{i+1}} - f_{B_i}}
		\leq 2 \sum_{i = 0}^\infty C_5 \left(\log \frac{1}{\abs{y-z}} + i\right)^{\frac{1 - \lambda}{n}},
	\end{equation}
	with $C_5 = C_5(B, \lambda, \mu, K, \Sigma, \norm{Df}_{L^n(B)})$. We then denote $a = \log (1/\abs{y-z})$, noting that $a > 1$ since $\abs{y-z} < R < R_0 < e^{-1}$. Since we also assume that $\lambda > n+1$, we have that $i \mapsto (a + i)^{(1-\lambda)/n}$ is decreasing, and we may estimate
	\begin{multline*}
		\sum_{i = 0}^\infty \left(a + i\right)^{\frac{1 - \lambda}{n}}
		\leq a^{\frac{1 - \lambda}{n}} + \int_{0}^\infty (a + t)^{\frac{1 - \lambda}{n}} \dd t\\ 
		= a^{-\frac{\lambda - 1}{n}} + \frac{n}{\lambda - n - 1} a^{-\frac{\lambda - n - 1}{n}}
		\leq \frac{\lambda - 1}{\lambda - n - 1} a^{-\frac{\lambda - n - 1}{n}}
	\end{multline*}
	In conclusion,
	\begin{multline}\label{eq:modulus_of_continuity}
		\abs{f(y) - f(z)} \leq 2C_5 \sum_{i = 0}^\infty \left(a + i\right)^{\frac{1 - \lambda}{n}}\\
		\leq 2 C_5 \frac{\lambda - 1}{\lambda - n - 1} a^{-\frac{\lambda - n - 1}{n}} 
		= C_6 \log^{-\frac{\lambda - n - 1}{n}} \frac{1}{\abs{y-z}},
	\end{multline}
	with $C_6 = C_6(B, \lambda, \mu, K, \Sigma, \norm{Df}_{L^n(B)})$.
	
	We hence have obtained the desired modulus of continuity for all Lebesgue points $y, z \in A \cap \B^n(x, R/(4e^3))$. Now, if $y \in \B^n(x, R/(4e^3)) \setminus A$, we can then use the fact that $A$ has full measure in $\B^n(x, R/(4e^3))$ to select $y_j \in A \cap \B^n(x, R/(4e^3))$ such that $y_j \to y$ as $j \to \infty$. By \eqref{eq:modulus_of_continuity}, $(f(y_j))$ is a Cauchy sequence, and therefore convergent. We select $f(y) = \lim_{j \to \infty} f(y_j)$; doing this for all $y \in \B^n(x, R/(4e^3)) \setminus A$ only changes the values of $f$ in a set of measure zero, and doesn't change $f(y)$ in points $y$ where $f$ is continuous. Now, by passing to the limit, we see that \eqref{eq:modulus_of_continuity} applies to all $y, z \in \B^n(x, R/(4e^3))$. Hence, $f$ has a continuous representative with the desired modulus of continuity.
\end{proof}

Theorem \ref{thm:MFDvalue_cont} then follows as an immediate corollary of already proven results.

\begin{customthm}{\ref{thm:MFDvalue_cont}}
	Let $\Omega$ be a domain in $\R^n$. Suppose that a Sobolev mapping $f \in W_{\loc}^{1,n} (\Omega, \R^n)$ satisfies $Df \in \cM_n(K, \Sigma, y_0)$ with $K \colon \Omega \to [1, \infty)$, $\Sigma \colon \Omega \to [0, \infty)$ and $y_0 \in \R^n$. If
	\begin{align*}
		\exp (\lambda K) \in L^1_\loc(\Omega) \qquad \text{and} \qquad \Sigma \log^{\mu} \left( e + \Sigma \right) \in L^1_\loc(\Omega),
	\end{align*}
	for some $\mu > \lambda + n - 1 > 2n$, then $f$ has a continuous representative. 
	
	In particular, for all $x_0 \in \Omega$ and sufficiently small $r>0$, we have the following local modulus of continuity estimate:
	\[ 
		\omega_f (x_0, r) \le C \log^{-\alpha} (1/r)  \quad \textnormal{where } \alpha =\frac{\lambda-n-1}{n}\, . 
	\]
\end{customthm}
\begin{proof}
	Since $\Sigma \log^{\mu} \left( e + \Sigma \right) \in L^1_\loc(\Omega)$, and since $\exp(A\abs{f}^{n/(n-1)}) \in L^1_\loc(\Omega)$ for some $A > 0$ by Theorem \ref{thm:Zygmund_Sobolev_embedding}, we have  $\Sigma \abs{f}^n \log^{\mu - n + 1} \left( e + \Sigma \abs{f}^n \right) \in L^1_\loc(\Omega)$ by Lemma \ref{lem:product_int_lemma}, where $\mu - n + 1 > \lambda > n+1$. Hence, the claim follows by applying Theorem \ref{thm:exp_cont_modulus}.
\end{proof}

\section{Counterexamples based on cusps}

In this section, we consider our first type of counterexample, which yields Theorems \ref{thm:discontinuity_K_Lp} and \ref{thm:cusp_ex_version_2}. Our construction will be in a planar disk $\D(r_0)$ with center at the origin and radius $r_0$. Our constructed mapping $f \colon \D(r_0) \to \R^2$ has a first coordinate function of $-\log \log \abs{z}^{-1}$, which is well defined in $\D(r_0) \setminus \{0\}$ as long as $r_0$ is small enough. We split the disk $\D(r_0)$ into two regions $\D(r_0) = A \cup B$ with different definitions of the second coordinate function, where in $A$ we aim to have $J_f(x) \geq 0$ with $\abs{Df(x)}^2 \leq K J_f(x)$, and in $B$ we try to obtain $\abs{Df(x)}^2 + K \abs{J_f(x)} \leq \Sigma$. The region $B$ will form a cusp at the origin.

\subsection{The two regions}

Let $\Omega = \D(r_0)$, where we assume that $r_0 \leq e^{-e}$. We begin by assuming that $\gamma$ is an absolutely continuous increasing function $\gamma \colon [0, r_0) \to [0, 1)$ such that $\gamma(0) = 0$. We specify $\gamma$ later in the text, as we use different choices of $\gamma$ to prove different theorems. We will use polar coordinates $(r, \theta)$ on the domain side in $\Omega$, where $\theta \in (-\pi, \pi]$.

The regions $A, B \subset \Omega$ will consist of two sub-regions $A = A_1 \cup A_2$ and $B = B_1 \cup B_2$ each. We let $B_1$ be the cusp-like region of $\Omega$ bounded by the curves $\theta = \gamma(r)$ and $\theta = - \gamma(r)$. Similarly, we let $A_1$ be the region bounded by the curves $\theta = \gamma(r)$ and $\theta = \pi - \gamma(r)$. The region $B_2$ is the reflection $-B_1$ of $B_1$ across the origin, and similarly $A_2 = -A_1$. See Figure \ref{fig:domains_A_and_B} for an illustration. 

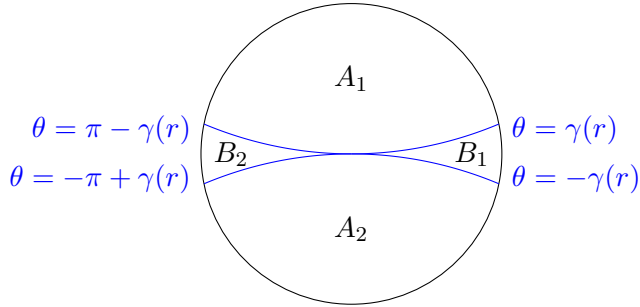
\begin{figure}[h t p]
	\centering
	\begin{tikzpicture}
		\draw (0,0) circle(2.0);
		
		\draw [blue,  domain=0:1, samples=40] 
		plot ({2*\x*(sin((pi/2+0.2*\x) r))}, {2*(\x)*(cos((pi/2+0.2*\x) r))});
		\draw [blue,  domain=0:1, samples=40] 
		plot ({2*\x*(sin((pi/2-(0.2*\x)) r))}, {2*(\x)*(cos((pi/2-(0.2*\x)) r))});

		\draw [blue,  domain=0:1, samples=40] 
		plot ({2*\x*(sin((3*pi/2+0.2*\x) r))}, {2*(\x)*(cos((3*pi/2+0.2*\x) r))});
		\draw [blue,  domain=0:1, samples=40] 
		plot ({2*\x*(sin((3*pi/2-(0.2*\x)) r))}, {2*(\x)*(cos((3*pi/2-(0.2*\x)) r))});
		
		\node at (0,1) {$A_1$};
		\node at (1.6,0) {$B_1$};
		\node at (-1.6,0) {$B_2$};
		\node at (0,-1) {$A_2$};
		
		\node at (2, 0)[blue, anchor=south west] {$\theta = \gamma(r)$};
		\node at (2, 0)[blue, anchor=north west] {$\theta = -\gamma(r)$};
		\node at (-2, 0)[blue, anchor=south east] {$\theta = \pi -\gamma(r)$};
		\node at (-2, 0)[blue, anchor=north east] {$\theta = -\pi+\gamma(r)$};
	\end{tikzpicture}
	\caption{The regions $A_1$, $A_2$, $B_1$ and $B_2$.}\label{fig:domains_A_and_B}
\end{figure}

\subsection{The function $f$ in the region $A$}

We first define $f$ in the region $A_1$. There, using polar coordinates on the domain side and Cartesian coordinates on the target side, we have
\begin{equation}\label{eq:f_def_A}
	f(r, \theta) = (- \log \log r^{-1}, h(r) \theta)
\end{equation}
for some absolutely continuous increasing function $h \colon [0, \infty) \to [0, \infty)$. We again specify $h$ later. 

Hence, we obtain a matrix of derivatives
\[
\begin{bmatrix}
	\partial_r f_1 & r^{-1} \partial_\theta f_1\\
	\partial_r f_2 & r^{-1} \partial_\theta f_2
\end{bmatrix}
=
\begin{bmatrix}
	r^{-1} \log^{-1} r^{-1} & 0\\
	h'(r) \theta & r^{-1} h(r)
\end{bmatrix}.
\]
In particular,
\begin{equation}\label{eq:f_A_Df_estimate}
	\abs{Df(r, \theta)}^2 \leq \frac{1}{r^2 \log^2 r^{-1}} + [h'(r)\theta]^2 + \frac{h^2(r)}{r^2}
\end{equation}
and
\begin{equation}\label{eq:f_A_Jf_estimate}
	J_f(r, \theta) = \frac{h(r)}{r^2 \log r^{-1}} \geq 0.
\end{equation}
We then simply pick $K = \abs{Df}^2 / J_f$ and $\Sigma \equiv 0$.

On $A_2$, we define $f(z) = f(-z)$. Since $z \mapsto -z$ is an orientation-preserving isometry in the plane, it follows that \eqref{eq:f_A_Df_estimate} and \eqref{eq:f_A_Jf_estimate} remain true in $A_2$.

\subsection{The function $f$ in the region $B$}

We wish that our function $f$ is continuous outside the origin. Hence, our boundary values in $B_1$ must match the ones given by $A_1$ and $A_2$. For this, we define the second coordinate of $f$ as a linear interpolation of these boundary values. That is, we define in $B_1$ that
\begin{equation}\label{eq:f_def_B}
	f(r, \theta) = \left(- \log \log r^{-1}, h(r)\left(\frac{\pi + 2\theta - \pi \theta/\gamma(r)}{2}\right) \right).
\end{equation}
Indeed, in the cases $\theta = \gamma(r)$ and $\theta = -\gamma(r)$, the second coordinate has the correct boundary values of $h(r) \gamma(r)$ and $h(r) (\pi - \gamma(r))$, respectively.

The derivatives of the first coordinate of $f$ remain unchanged from domain $A$. For the other terms in the matrix of derivatives, we first get
\begin{equation}\label{eq:f_B_second_coord_dr}
	\partial_r f_2 = \left(\frac{\pi}{2} + \theta\right) h'(r) - \frac{\pi \theta}{2} \frac{d}{dr} \left(\frac{h(r)}{\gamma(r)} \right).
\end{equation}
Then, by $\gamma(r) < 1$, we get
\begin{equation}\label{eq:f_B_second_coord_dtheta}
	r^{-1} \partial_\theta f_2 = \frac{h(r)}{r}  \left(1 - \frac{\pi}{2\gamma(r)}\right)
	< - \left(\frac{\pi}{2} - 1 \right) \frac{h(r)}{r\gamma(r)}
\end{equation}
In particular, we have that $r^{-1} \partial_\theta f_2 < 0$, and consequently $J_f < 0$ in $B_1$. Hence, in $B_1$, we select $K = - \abs{Df}^2/J_f \geq 1$, and $\Sigma = 2\abs{Df}^2$.

Similarly as for $A_2$, we may define $f$ in $B_2$ by $f(z) = f(-z)$, and all our considerations will also apply to $B_2$.

\subsection{Fixing the parameters}\label{subsect:cusp_proofs}

We have now outlined the construction, but have left the functions $h$ and $\gamma$ undetermined. The theorems we wish to prove follow with different choices of $h$ and $\gamma$. 

Throughout the rest of this paper, given two functions $f, g \colon X \to \R$, we use the notation $f \lesssim g$ if there exists a constant $C > 0$ such that $f \leq C g$. We also denote $f \approx g$ if $f \lesssim g \lesssim f$. Several of the uses of these symbols are based on the elementary fact that if $f, g \colon [a, \infty) \to (0, \infty)$, $a \in \R$, are continuous and $\limsup_{t \to \infty} f(t)/g(t) < \infty$, then $f \lesssim g$.

We now recall the statement of Theorem \ref{thm:cusp_ex_version_2}, and then give its proof.

\begin{customthm}{\ref{thm:cusp_ex_version_2}}
	Let $p, q \in (1, \infty)$. If $p^{-1} + q^{-1} \geq 1$, then there exists a domain $\Omega \subset \mathbb R^2$ and a Sobolev map $f\in W^{1,2} (\Omega, \mathbb R^2)$ such that $0\in \Omega$, $f \in C (\Omega \setminus \{0\}, \mathbb R^2)$, $\lim_{x \to \infty} \abs{f(x)}=\infty$, and $Df \in \mathcal M_2(K, \Sigma)$ with
	\[ 
		K \in L^p  (\Omega) \qquad \text{and} \qquad \frac{\Sigma}{K} \in L^q (\Omega).
	\]
\end{customthm}

\begin{proof}
	Let $p, q \in (1, \infty)$, and let $\eps > 0$. We select $r_0 = e^{-e}$ and 
	\[
		h(r) = r^{2p^{-1}}, \qquad \gamma(r) = \log^{-\eps} r^{-1}.
	\]
	Indeed, when $r < e^{-e}$, we have $0 \leq \gamma(r) < e^{-\eps} < 1$.
	
	In $A_1$, we have by \eqref{eq:f_A_Df_estimate} that
	\[
	\abs{Df(r, \theta)}^2 \leq \frac{1}{r^2 \log^2 r^{-1}} + \left(\frac{4\pi^2}{p^2} + 1\right) r^{-2 + 2p^{-1}} \lesssim \frac{1}{r^2 \log^2 r^{-1}}.
	\]
	Hence, $\abs{Df} \in L^2(A)$. By also referring to \eqref{eq:f_A_Jf_estimate}, we have in $A_1$ the estimate
	\[
	K(r, \theta) 
	= \frac{\abs{Df(r, \theta)}^2}{J_f(r, \theta)}
	\lesssim \frac{1}{r^{2p^{-1}} \log r^{-1}}.
	\]
	We hence estimate that
	\begin{align*}
		\int_A K^p 
		&= 2 \int_0^{e^{-e}} \int_{\gamma(r)}^{\pi - \gamma(r)} K^p(r, \theta) r \dd\theta \dd r\\
		&\lesssim \int_0^{e^{-e}} r^{-1} \log^{-p} r^{-1} \dd r < \infty,
	\end{align*}
	showing that $K \in L^p(A)$.
	
	We then consider points $(r, \theta)$ in $B_1$. By \eqref{eq:f_B_second_coord_dr} and $\abs{\theta} \leq \gamma(r) < 1 \leq \pi$, we have
	\begin{align*}
		\abs{\partial_r f_2(r, \theta)}^2 &= \abs{\frac{2\theta + \pi}{p} r^{2p^{-1} - 1} +\frac{\pi \theta \log^{\eps} r^{-1}}{2} r^{2p^{-1} - 1}\left(\frac{2}{p} - \frac{\eps}{\log r^{-1}} \right)}^2\\
		&\lesssim r^{4p^{-1} - 2} \left(1 + \log^{-2 + 2\eps} r^{-1}\right).
	\end{align*}
	Furthermore, by \eqref{eq:f_B_second_coord_dtheta},
	\begin{align*}
		\abs{r^{-1} \partial_\theta f_2(r, \theta)}^2 &\lesssim r^{4p^{-1} - 2} \log^{2\eps} r^{-1}.
	\end{align*}
	The exponent $4p^{-1} - 2$ in the above bounds is greater than $-2$. Hence, we have the overall estimate
	\begin{equation}\label{eq:cusp_ex_polylog_B_estimated_Df}
		\abs{Df(r, \theta)}^2 \approx \frac{1}{r^2 \log^2 r^{-1}}
	\end{equation}
	whenever $(r, \theta) \in B_1$. In particular, we have $\abs{Df} \in L^2(B_1)$, and consequently $\abs{Df} \in L^2(\Omega)$. Moreover, we have
	\[
	-J_f(r, \theta) = \left(\frac{\pi}{2} \log^\eps r^{-1} - 1 \right) r^{2p^{-1} - 2} \log^{-1} r^{-1} ,
	\]
	so by $\log^\eps r^{-1} > 1$ we get the two-sided estimate
	\begin{equation}\label{eq:cusp_ex_polylog_B_estimated_Jf}
	-J_f(r, \theta) \approx r^{2p^{-1} - 2} \log^{\eps -1} r^{-1}.
	\end{equation}
	Combined with \eqref{eq:cusp_ex_polylog_B_estimated_Df}, this yields
	\[
		\frac{\abs{Df(r, \theta)}^2}{-J_f(r, \theta)} = K(r, \theta) \approx \frac{1}{r^{2p^{-1}}\log^{1+\eps} r^{-1}}.
	\]
	Since $p(2p^{-1}) = 2$ and $p(1+\eps) > 1$, we see that $K \in L^p(B)$, and hence $K \in L^p(\Omega)$.
	
	It hence remains to consider the integral $(\Sigma/K)^q$ over $B$. Since we chose $\Sigma = \abs{Df}^2$ and $K = \abs{Df}^2 / (-J_f(x))$, we have $\Sigma/K = -J_f(x)$. Hence, by \eqref{eq:cusp_ex_polylog_B_estimated_Jf}, we have $\Sigma/K \lesssim r^{2p^{-1} - 2} \log^{\eps -1} r^{-1}$. We note that since $B$ is a cusp, this majorant in fact has a better degree of integrability over $B$ than it has over $\Omega$. In particular, we may estimate that
	\begin{align*}
		\int_{B} \frac{\Sigma^q}{K^q}
		&\lesssim \int_0^{e^{-e}} \int_{-\gamma(r)}^{\gamma(r)} \frac{r \dd \theta \dd r}{r^{2q - 2p^{-1}q} \log^{q - q\eps} r^{-1}}\\
		&\leq 2 \int_0^{e^{-e}} \frac{\dd r}{r^{2q - 2p^{-1}q - 1} \log^{q - (q - 1)\eps} r^{-1}}.
	\end{align*}
	For integrability, we require $2q - 2p^{-1}q - 1 \leq 1$, which is equivalent to $q^{-1} \geq 1 - p^{-1}$. Moreover, in the extremal case $q^{-1} + p^{-1} = 1$, we also require $q - (q-1)\eps > 1$, which is equivalent to $\eps < 1$. Hence, any choice of $\eps \in (0, 1)$ will give us the desired example.
\end{proof}

Our next result is the version of this example with the highest degree of integrability for $\Sigma$. This is by a different choice of $h$ and $\gamma$, and hence this gain in the regularity of $\Sigma$ comes at a cost in the regularity of $\Sigma/K$.

\begin{thm}\label{lem:is_this_also_geq}
	Let $p, s \in (1, \infty)$. If $(p+1)^{-1} + s^{-1} \geq 1$, then there exists a domain $\Omega \subset \mathbb R^2$ and a Sobolev map $f\in W^{1,2} (\Omega, \mathbb R^2)$ such that $0\in \Omega$, $f \in C (\Omega \setminus \{0\}, \mathbb R^2)$, $\lim_{x \to \infty} \abs{f(x)}=\infty$, and $Df \in \mathcal M_2(K, \Sigma)$ with
	\[  
	K \in L^p  (\Omega) \qquad \text{and} \qquad  \Sigma \in L^s (\Omega).
	\]
\end{thm}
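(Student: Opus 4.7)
The plan is to adapt the cusp construction used for Theorem~\ref{thm:cusp_ex_version_2}: the geometric setup (regions $A=A_1\cup A_2$ and $B=B_1\cup B_2$ separated by the curves $\theta=\pm\gamma(r)$ and $\theta=\pm(\pi-\gamma(r))$, with $f$ given by \eqref{eq:f_def_A} on $A$ and \eqref{eq:f_def_B} on $B$) is kept exactly the same, but the profile functions $h$ and $\gamma$ are chosen differently to make the cusp power-thin rather than merely logarithmically thin. Explicitly, I would take
\[
	h(r)=r^{2/p},\qquad \gamma(r)=r^{2/p}\log^{\delta} r^{-1}, \qquad \delta\in(p/(p+2),\,1),
\]
with $r_0$ small enough that $\gamma(r)<1$ on $[0,r_0)$. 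Introducing the factor $r^{2/p}$ inside $\gamma$ shrinks the cusp area $\int r\gamma(r)\dd r$ dramatically, which is precisely what allows $\Sigma=2\abs{Df}^2$ to lie in $L^s$ for $s>1$, while the extra $\log^{\delta}r^{-1}$ factor ensures we stay in the regime where the interpolation derivative $r^{-1}\partial_\theta f_2$ dominates $\abs{Df}$ on $B$.

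The verification on $A$ is essentially identical to the corresponding step in the proof of Theorem~\ref{thm:cusp_ex_version_2}: one obtains $\abs{Df}^2\lesssim r^{-2}\log^{-2}r^{-1}$ and $J_f\approx r^{2/p-2}/\log r^{-1}$, so $K\lesssim r^{-2/p}/\log r^{-1}$ and $K\in L^p(A)$. On $B$ the crucial quantity is the ratio $h(r)\log r^{-1}/\gamma(r)=\log^{1-\delta}r^{-1}\to\infty$, which, since $\delta<1$, places us in the regime where the entry $r^{-1}\partial_\theta f_2\approx -h(r)/(r\gamma(r))$ dominates the Jacobian matrix. A direct computation then gives
\[
	\abs{Df}^{2}\approx r^{-2}\log^{-2\delta}r^{-1}, \quad \abs{J_f}\approx r^{-2}\log^{-\delta-1}r^{-1}, \quad K\approx \log^{1-\delta}r^{-1}, \quad \Sigma\approx r^{-2}\log^{-2\delta}r^{-1},
\]
and hence $\int_B K^p\dd x\approx \int_0^{r_0} r^{1+2/p}\log^{p-(p-1)\delta}r^{-1}\dd r<\infty$.

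The main step is the integrability of $\Sigma$. One computes $\int_B \Sigma^s\dd x\approx \int_0^{r_0} r^{1+2/p-2s}\log^{\delta(1-2s)}r^{-1}\dd r$, and the hypothesis $s\leq (p+1)/p$ ensures $1+2/p-2s\geq -1$. The subcritical case $s<(p+1)/p$ converges immediately from the power factor alone, while the critical case $s=(p+1)/p$ produces the borderline integral $\int r^{-1}\log^{\delta(1-2s)}r^{-1}\dd r$, convergent if and only if $\delta(2s-1)>1$, i.e.\ $\delta>p/(p+2)$. The main obstacle, and the reason for the seemingly ad hoc form of $\gamma$, is exactly this balancing act at the critical exponent: $\delta$ must be strictly less than $1$ so that we sit in the ``large $\abs{Df}$'' regime on $B$, and strictly greater than $p/(p+2)$ so the logarithmic factor compensates the critical $r^{-1}$ singularity. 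Since $p/(p+2)<1$ for every $p>0$, the interval is nonempty. The remaining checks---$f\in W^{1,2}(\Omega,\R^2)$, continuity on $\Omega\setminus\{0\}$, and $\lim_{x\to 0}\abs{f(x)}=\infty$---then follow exactly as in the proof of Theorem~\ref{thm:cusp_ex_version_2}.
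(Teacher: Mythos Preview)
Your proposal is correct and follows the same cusp construction as the paper. The only difference is the choice of the cusp profile: the paper takes $\gamma(r)=r^{2/p}\log r^{-1}$ (your $\delta=1$), which already gives $K\in L^\infty(B)$ and makes the critical $\Sigma$-integral converge since the logarithmic exponent is $1-2s=-(p+2)/p<-1$; your extra parameter $\delta\in(p/(p+2),1)$ is therefore unnecessary, and your stated need for $\delta<1$ ``to sit in the large $|Df|$ regime'' is overly cautious, since at $\delta=1$ the terms $|\partial_r f_1|^2$ and $|r^{-1}\partial_\theta f_2|^2$ are merely comparable and the two-sided estimate $|Df|^2\approx r^{-2}\log^{-2}r^{-1}$ still holds.
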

\begin{proof}
	Let $p, q \in (1, \infty)$, and let $\eps > 0$. This time we choose
	\[
		h(r) = r^{2p^{-1}}, \qquad \gamma(r) = r^{2p^{-1}} \log r^{-1}.
	\]
	We may select an $r_0 \leq e^{-e}$ such that $\gamma$ is increasing on $[0, r_0]$ and $\gamma(r_0) < 1$.
	
	The verification that $K \in L^p(A)$ is unchanged from the previous lemma. The difference arises when applying \eqref{eq:f_B_second_coord_dr} and \eqref{eq:f_B_second_coord_dtheta}. Indeed, since $\abs{\theta} \leq r^{2p^{-1}} \log r^{-1} \leq 1$, we obtain.
	\begin{align*}
		\abs{\partial_r f_2(r, \theta)}^2 &= \abs{\frac{2\theta + \pi}{p} r^{2p^{-1} - 1} +\frac{\pi \theta}{2} r^{-1} \log^{-2} r^{-1}}^2\\
		&\lesssim r^{4p^{-1} - 2} + r^{- 2} \log^{-4} r^{-1},
	\end{align*}
	and
	\begin{align*}
		\abs{r^{-1} \partial_\theta f_2(r, \theta)}^2 
		&\lesssim r^{-2} \log^{-2} r^{-1}.
	\end{align*}
	In all of the previously computed terms, either the exponent of $r$ is greater than $-2$, or the exponent of $r$ is $-2$ and the exponent of the logarithm is at most $-2$. Hence, we still have \eqref{eq:cusp_ex_polylog_B_estimated_Df} unchanged. For $J_f$, we compute similarly as in the last lemma, and instead get
	\begin{equation}\label{eq:cusp_ex_poly-poly_B_estimated_Jf}
		-J_f(r, \theta) \approx r^{- 2} \log^{-2} r^{-1}
	\end{equation}
	when $(r, \theta) \in B$. In particular, $K = \abs{Df}^2/(-J_f) \in L^\infty(B)$.
	
	It remains to estimate the integral of $\Sigma^s = (2\abs{Df})^s$ over $B$. Computing similarly as in the previous lemma, we get
	\begin{align*}
		\int_{B} \Sigma^q
		&\lesssim \int_0^{r_0} \int_{-\gamma(r)}^{\gamma(r)} \frac{r \dd \theta \dd r}{r^{2s} \log^{2s} r^{-1}}\\
		&\leq 2 \int_0^{r_0} \frac{\dd r}{r^{2s - 2p^{-1} - 1} \log^{2s - 1} r^{-1}}.
	\end{align*}
	For this to converge, since the exponent of the logarithm satisfies $2s - 1 > 1$ due to $s > 1$, we only require $2s - 2p^{-1} - 1 \leq 1$. Rearranging yields $s \leq 1 + p^{-1} = (p+1)^*$, where $(p+1)^*$ is the H\"older conjugate of $p+1$. In particular, this is equivalent with $(p+1)^{-1} + s^{-1} \geq 1$.
\end{proof}

The remaining result which relies on this example type is Theorem \ref{thm:discontinuity_K_Lp}. This is achieved by selecting both $h$ and $\gamma$ to be powers of logarithms, with a suitable choice of corresponding exponents.

\begin{customthm}{\ref{thm:discontinuity_K_Lp}}
	For every $\mu \in (0, 2)$, there exist a domain $\Omega \subset \mathbb R^2$ and a Sobolev map $f\in W^{1,2} (\Omega, \mathbb R^2)$ such that $0\in \Omega$, $f \in C (\Omega \setminus \{0\}, \mathbb R^2)$, $\lim_{x \to \infty} \abs{f(x)}=\infty$, and $Df \in \mathcal M_2(K, \Sigma)$ with
	\[  
		\exp (\lambda K )\in L^{1}(\Omega)  
		\qquad \text{and} \qquad   
		\Sigma \log^\mu (e+\Sigma) \in L^1(\Omega) \, 
	\]
	for every $\lambda > 0$.
\end{customthm}
\begin{proof}
	Let $\lambda \in (0, \infty)$. We may assume $\mu > 1$, as an example for a given $\mu$ also works for all smaller $\mu$. We choose
	\[
		h(r) = \log^{-\nu} r^{-1}, \qquad \gamma(r) = \log^{1-\nu} r^{-1},
	\]
	Where $\nu \in (\mu, 2)$. Since $\nu > \mu > 1$ by assumption, $\gamma$ is increasing, and we may hence choose $r_0 = e^{-e}$ as $\gamma(e^{-e}) = e^{1-\nu} < 1$.
	
	In $A_1$, \eqref{eq:f_A_Df_estimate} yields due to $\nu > 1$ that
	\[
		\abs{Df(r, \theta)}^2 \leq \frac{1}{r^2 \log^2 r^{-1}} + \frac{\nu}{r^2 \log^{2\nu + 2} r^{-1}} + \frac{1}{r^2 \log^{2\nu} r^{-1}} \lesssim \frac{1}{r^2 \log^2 r^{-1}}.
	\]
	Hence, clearly $\abs{Df} \in L^2(A)$. Moreover, $J_f = r^{-2} \log^{-1-\nu} r^{-1}$, so
	\[
		K(r, \theta) \leq C \log^{\nu - 1} r^{-1}
	\]
	for some $C > 0$. The exponential integrals of $K$ are all finite by the estimate
	\begin{align*}
		\int_A \exp(\lambda K)
		&\leq 2\pi \int_0^{e^{-e}} \exp(C \lambda \log^{\nu - 1} r^{-1}) r \dd r\\
		&= 2\pi \int_0^{e^{-e}} r^{1 - C \lambda \log^{\nu - 2} r^{-1}} \dd r < \infty,
	\end{align*}
	since $\lim_{r \to 0^+} C \lambda \log^{\nu - 2} r^{-1} = 0$ due to $\nu < 2$.
	
	In $B_1$, \eqref{eq:f_B_second_coord_dr} and \eqref{eq:f_B_second_coord_dtheta} combined with $\abs{\theta} \leq \log^{1-\nu} r^{-1} \leq 1 \leq \pi$ result in
	\begin{align*}
		\abs{\partial_r f_2(r, \theta)}^2 &= \abs{\frac{2\nu\theta + \pi\nu}{2} r^{-1} \log^{-\nu - 1} r^{-1} +\frac{\pi \theta}{2} r^{-1} \log^{-2} r^{-1}}^2\\
		&\lesssim r^{- 2}  \left( \log^{-2-2\nu} r^{-1} + \log^{-4} r^{-1} \right)
	\end{align*}
	and
	\begin{align*}
		\abs{r^{-1} \partial_\theta f_2(r, \theta)}^2 
		&\lesssim  r^{- 2} \log^{-2} r^{-1}.
	\end{align*}
	As $\log^{t} r^{-1}$ is increasing with respect to $t$ when $r < e^{-e}$, we again have 
	\[
		\abs{Df(r, \theta)}^2 \leq \frac{C'}{r^{2} \log^{2} r^{-1}}
	\] 
	in $B_1$ for some $C' > 0$. For the Jacobian, we instead get
	\begin{equation}\label{eq:cusp_ex_log-log_B_estimated_Jf}
		-J_f(r, \theta) \approx r^{- 2} \log^{-2} r^{-1}.
	\end{equation}
	In particular, our choice $K = \abs{Df}^2/(-J_f)$ is in $L^\infty(B)$, concluding exponential integrability of $K$ in all of $\Omega$ for all choices of $\lambda$.
	
	The last step is to estimate the integral of $\Sigma \log^\mu(e + \Sigma)$ over $B_1$, where $\Sigma = 2\abs{Df}^2$. We estimate using $(e + ab) \leq (e + a)(e + b)$ for $a,b \geq 0$ that
	\[
		\log (e + \Sigma) \leq \log\left(e + \frac{2C'}{r^{2} \log^{2} r^{-1}}\right) \leq \log\left(e + \frac{2C'}{\log^2 r^{-2}}\right) + 2 \log(e + r^{-1}),
	\]
	and hence, as $\gamma(r) = \log^{1 - \nu} r^{-1}$, we get
	\begin{align*}
	 	\int_{B} \Sigma \log^{\mu}(e + \Sigma)
	 	&= 2 \int_0^{e^{-e}} \int_{-\gamma(r)}^{\gamma(r)} \Sigma(r, \theta) \log^{\mu}(e + \Sigma(r, \theta)) r \dd \theta \dd r\\
	 	&\lesssim \int_0^{e^{-e}} \frac{\log^\mu (e + 2C'\log^{-2} r^{-1}) + \log^\mu(e + r^{-1})}{r \log^{2 - (1 - \nu)} r^{-1}} \dd r.
	\end{align*}
 	When $r \to 0$, we have $\log^{-2} r^{-1} \to 0$. Hence, for small $r$, the largest term in the numerator is $\log^\mu(e + r^{-1})$. Since $r^{-1} > e^{e}$, we have $r^{-2} - r^{-1} - e \geq 0$. Hence, we may estimate
 	\[
 		\frac{\log^\mu(e + r^{-1})}{r \log^{2 - (1 - \nu)} r^{-1}} \leq \frac{2^\mu}{r \log^{1 + (\nu - \mu)} r^{-1}},
 	\]
 	which is integrable over $[0, e^{-e}]$ due to our assumption $\nu > \mu$. Thus, $\Sigma \log^\mu(e + \Sigma) \in L^1(B)$, and consequently $\Sigma \log^\mu(e + \Sigma) \in L^1(\Omega)$.
\end{proof}

\section{Counterexamples based on spirals}

In this section, we construct a counterexample built around the case $\Sigma \in L^\infty(\Omega)$, which will give us Theorem \ref{thm:discontinuity_bounded_sigma}.  Furthermore, if $K \in L^p_\loc(\Omega)$ with $p \in [1, 2]$, then this counterexample also yields an alternate proof of Theorem~\ref{thm:cusp_ex_version_2}.  In exchange for failing when $p > 2$, this alternate counterexample has a better optimal integrability for $\Sigma$ when $p < \sqrt{2}$; that is, it improves Theorem~\ref{lem:is_this_also_geq} for such values of $p$. Moreover, this improved integrability of $\Sigma$ is achieved simultaneously with the optimal integrability of $\Sigma/K$, whereas the construction of Theorem \ref{thm:cusp_ex_version_2} involves a trade-off between the integrabilities of $\Sigma$ and $\Sigma/K$. 
\begin{thm}\label{thm:spiral_ex_version_2}
	Suppose that $p \in [1, 2]$, $q \in [1, \infty]$, and $p^{-1} + q^{-1} \geq 1$. Then there exist a domain $\Omega \subset \mathbb R^2$ and a Sobolev map $f\in W^{1,2} (\Omega, \mathbb R^2)$ such that $0\in \Omega$, $f \in C (\Omega \setminus \{0\}, \mathbb R^2)$, $\lim_{x \to \infty} \abs{f(x)}=\infty$, and $Df \in \mathcal M_2(K, \Sigma)$ with
	\[  
		K \in L^p  (\Omega), \qquad \frac{\Sigma}{K} \in L^q (\Omega), \qquad \text{and} \qquad \Sigma \in L^\frac{q}{2}(\Omega).
	\]
\end{thm}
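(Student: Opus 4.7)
The construction is an extension of the cusp template used in Section~\ref{subsect:cusp_proofs}, replacing the two antipodal straight cusps with a single spiral seam. Fix $\Omega = \D(r_0)$ with $r_0 > 0$ small, and work in polar coordinates $(r, \theta)$ on $\Omega \setminus \{0\}$. We would introduce three auxiliary functions to be specified later: a monotone spiral angle $\phi \colon (0, r_0] \to \R$ with $|\phi(r)| \to \infty$ as $r \to 0^+$, a tube half-width $\gamma \colon (0, r_0] \to (0, \pi/2)$, and an amplitude $h \colon (0, r_0] \to (0, \infty)$. The spiral tube is
\[
	B \;=\; \bigl\{(r, \theta) \in \Omega : \mathrm{dist}_{\S^1}(\theta, \phi(r)) < \gamma(r)\bigr\},
\]
and we set $A = \Omega \setminus B$. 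Since $B$ is a tubular neighborhood of the curve $r \mapsto (r, \phi(r) \bmod 2\pi)$, the set $A$ is simply connected and supports a continuous unwrapped-argument branch $\tilde\theta \colon A \to \R$.

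On $A$, define $f$ in the spirit of Section~\ref{subsect:cusp_proofs} by
\[
	f(r, \theta) \;=\; \bigl(-\log\log r^{-1},\; h(r)\bigl(\tilde\theta(r, \theta) - \phi(r)\bigr)\bigr),
\]
which gives $J_f \ge 0$ on $A$ with distortion function $K = |Df|^2/J_f$ controllable in terms of $h$, $\phi$, $r$. On $B$, extend $f$ continuously by linearly interpolating the boundary values from $A$ in $\theta$; since the branches of $\tilde\theta$ on the two sides of $B$ differ by $2\pi$, this produces a jump of size $\approx 2\pi h(r)$ across $B$, yielding $\partial_\theta f_2 \approx -\pi h(r)/\gamma(r)$ and $J_f \le 0$ inside $B$. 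In $B$, one must then set $\Sigma \ge |Df|^2 + K|J_f|$, with $K \ge 1$ free.

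For given $p \in [1,2]$ and $q \in [1,\infty]$ with $p^{-1}+q^{-1}\ge 1$, we would choose $h$, $\gamma$, $\phi$ as explicit powers of $r$ and $\log r^{-1}$ so that $K \in L^p(\Omega)$, $\Sigma/K \in L^q(\Omega)$, and $\Sigma \in L^{q/2}(\Omega)$ hold simultaneously. Each integral splits over $A$ and $B$: the $A$-integrals resemble those in Section~\ref{subsect:cusp_proofs}, while for $B$ one uses $|B| \approx \int_0^{r_0} 2\gamma(r)\,r\,\dd r$. The extremal choice $p = 1$, $q = \infty$ recovers Theorem~\ref{thm:discontinuity_bounded_sigma}.

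The main obstacle is achieving $\Sigma \in L^{q/2}$ together with the other two conditions. Applying H\"older's inequality to $\Sigma = K \cdot (\Sigma/K)$ yields only $\Sigma \in L^{pq/(p+q)}$, which is strictly weaker than $L^{q/2}$ whenever $p < q$. The spiral geometry is precisely what breaks this barrier: inside $B$, $K$ and $\Sigma$ have different dependencies on $\gamma$, so one can tune $\gamma$ to shrink the $\Sigma$-integral without losing control of $\Sigma/K$. Balancing the three parameters $h$, $\gamma$, $\phi$ to fulfill all three integrability demands simultaneously is where the restriction $p \le 2$ emerges; extending beyond $p = 2$ would require $\gamma$ so small that $\Sigma^{q/2}$ fails to be integrable, which is why the spiral construction alone does not cover the regime $p > 2$ handled by Theorem~\ref{thm:cusp_ex_version_2}.
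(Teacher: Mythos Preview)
Your proposal has a genuine gap. Keeping $-\log\log r^{-1}$ as the first coordinate of $f$ forces $\abs{Df(x)} \geq \abs{x}^{-1}\log^{-1}\abs{x}^{-1}$ everywhere, in particular on $B$. Since $\Sigma \geq \abs{Df}^2$ on $B$, this immediately rules out $\Sigma \in L^\infty(B)$, so your construction cannot cover the case $q = \infty$, contrary to your claim that it recovers Theorem~\ref{thm:discontinuity_bounded_sigma}. More broadly, this lower bound on $\Sigma$ is independent of the tube width $\gamma$; shrinking $\gamma$ reduces only the \emph{measure} of $B$, not the pointwise size of $\Sigma$ on it, exactly as in the cusp construction of Section~4. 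Wrapping the cusp into a spiral tube gains nothing here.

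The paper's construction avoids this by swapping the roles of the radial and angular directions. The real part of $f$ is a free function $\varphi(r)$ (which may be taken Lipschitz), and the \emph{imaginary} part is the escaping coordinate $-\log\log\theta$, exploiting the infinite angular length of the spiral. The regions $A$ and $B$ are the two bands between a pair of interlocking spirals $r = g(\theta) = (\theta\log\theta)^{-1}$ and $r = h(\theta)$, not a tube around a single curve. In $B$ the map is simply $\varphi(r) - i\log\log\theta$, which has $\abs{Df}^2 \leq 4 + (\varphi'(r))^2$; taking $\varphi(r) = r$ gives $\Sigma \in L^\infty(B)$ directly. The work is then in $A$, where one must interpolate $\log\log\theta$ between consecutive turns of the spiral while keeping $J_f > 0$; this is done via Lambert's $W$-function. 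The restriction $p \leq 2$ arises not from any $\Sigma$-balance but from the requirement $\abs{Df} \in L^2(\Omega)$, which with $\varphi'(r) \approx r^{2/p - 2}$ becomes $r(\varphi'(r))^2 \in L^1([0, r_0])$.
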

 
We again construct our example in a planar region $\Omega \subset \R^2$ with a point of discontinuity at the origin, and we retain our strategy from the previous section of splitting $\Omega$ into two regions $A$ and $B$, where $\abs{Df}^2 \leq K J_f$ in $A$ and $\abs{Df}^2 + K \abs{J_f} \leq \Sigma$ in $B$. Notably, when $\Sigma$ is bounded from above by a constant, $f$ ends up being Lipschitz under the path length metric in $B$. Hence, if we wish that $f$ escapes to infinity along $B$, the region $B$ must somehow be infinitely long. This pushes us towards a construction where $A$ and $B$ are two interlocking infinitely long spirals centered at the origin.

\subsection{Preliminaries: Lambert's $W$-function}

We begin by recalling a special function that is of great use to us in our construction. Namely, \emph{Lambert's $W$-function} is the inverse function $W = \psi^{-1}$ of the function $\psi(t) = t e^t$. The $W$-function has two branches on the real line. In this paper, we assume $W$ to be the positive branch:
\[
	W \colon [-e^{-1}, \infty) \to [-1, \infty), \quad W(t) e^{W(t)} = t.
\]
We collect into the following lemma the elementary properties of the $W$-function that we use. For a general reference on the $W$-function, see e.g.\ \cite{CorlessEtAl_Lambert-W}.
\begin{lemma}\label{lem:W_props}
	The $W$-function satisfies the following.
	\begin{enumerate}
		\item $W$ is strictly increasing on $[-e^{-1}, \infty)$.
		\item $W(0) = 0$, and hence $W(t) > 0$ if $t > 0$.
		\item We have $W(t \log t) = \log t$ if $t \geq e^{-1}$.
		\item The derivative of $W$ is given on $(-e^{-1}, \infty)$ by
		\[
		W'(t) = \frac{W(t)}{t(1 + W(t))} = \frac{1}{t + e^{W(t)}}.
		\]
	\end{enumerate}
\end{lemma}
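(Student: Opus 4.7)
The plan is to derive all four properties directly from the defining relation $W(\psi(s)) = s$ where $\psi(s) = s e^s$, together with elementary calculus on $\psi$.

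First I would establish monotonicity (1) by computing $\psi'(s) = (1+s)e^s$ and noting $\psi'(s) > 0$ on $(-1, \infty)$ with $\psi'(-1) = 0$. Thus $\psi$ is strictly increasing on $[-1, \infty)$. Combined with $\psi(-1) = -e^{-1}$ and $\psi(s) \to \infty$ as $s \to \infty$, this gives a bijection $\psi \colon [-1, \infty) \to [-e^{-1}, \infty)$ whose inverse $W$ is therefore strictly increasing on $[-e^{-1}, \infty)$.

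For (2), I would simply evaluate $\psi(0) = 0$ to get $W(0) = 0$, and then invoke (1) to conclude $W(t) > 0$ for $t > 0$. For (3), the hypothesis $t \geq e^{-1}$ gives $\log t \geq -1$, so $\log t$ lies in the domain on which $W \circ \psi$ acts as the identity; hence $W(t \log t) = W(\psi(\log t)) = \log t$.

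The derivative formula (4) would follow by implicit differentiation of $W(t) e^{W(t)} = t$: differentiating yields $W'(t) e^{W(t)}(1 + W(t)) = 1$, whence
\[
W'(t) = \frac{1}{e^{W(t)}(1 + W(t))}.
\]
Using the defining relation to substitute $e^{W(t)} = t/W(t)$ for $t \neq 0$ yields the first form $W'(t) = W(t)/(t(1 + W(t)))$, and expanding the denominator as $e^{W(t)}(1+W(t)) = e^{W(t)} + W(t) e^{W(t)} = e^{W(t)} + t$ yields the second. At $t = 0$, both formulas extend continuously (the second gives $W'(0) = 1/(0 + e^0) = 1$). Differentiability on $(-e^{-1}, \infty)$ follows from the inverse function theorem applied to $\psi$, which is valid since $\psi'(s) = (1+s)e^s > 0$ for $s > -1$, i.e., on the open set mapped onto $(-e^{-1}, \infty)$.

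There is no serious obstacle here; the entire lemma is a routine unpacking of $\psi = t \mapsto t e^t$ and its inverse, and the main care needed is just to ensure that we apply $W \circ \psi = \mathrm{id}$ only on the branch $[-1, \infty)$ on which it holds, which is why the hypothesis $t \geq e^{-1}$ appears in (3).
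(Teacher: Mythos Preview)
Your proposal is correct and complete. The paper itself does not prove this lemma at all: it simply states the properties and refers the reader to the survey \cite{CorlessEtAl_Lambert-W} on the Lambert $W$-function, so your direct derivation from the defining relation $W(se^s)=s$ supplies exactly what the paper omits.
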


\subsection{Construction}

We define two spirals in polar coordinates. The first one is the spiral $r = g(\theta)$, where
\[
	g(\theta) = \frac{1}{\theta \log \theta}, \qquad \theta \in [\theta_0, \infty),
\]
where $\theta_0 \geq 2\pi$ is some starting angle. The second one is given by $r = h(\theta)$, where
\[
	h(\theta) = \frac{g(\theta) + g(\theta + 2\pi)}{2}, \qquad \theta \in [\theta_0, \infty);
\]
that is, the spiral $r = h(\theta)$ lies exactly halfway between the successive points where the spiral $r = g(\theta)$ meets a specific ray from the origin. We define our domain $\Omega \subset \C$ by
\[
	\Omega = \{r e^{i\theta} : 0 \leq r < g(\theta), \theta \in (\theta_0, \theta_0 + 2\pi] \}.
\]
See Figure \ref{fig:double_spiral} for an illustration of $\Omega$ and the spirals.  
\begin{figure}[h]
	\centering
	\begin{tikzpicture}[scale=1.4]
		\draw [fill=gray!30] 
		plot[smooth, domain=2*pi:4*pi, samples=100]  ({(23/(\x*ln(\x)))*(cos((\x) r))}, {(23/(\x*ln(\x)))*(sin((\x) r))});
		
		\draw [blue,  domain=2*pi:10*pi, samples=500] 
		plot ({(23/(\x*ln(\x)))*(cos((\x) r))}, {(23/(\x*ln(\x)))*(sin((\x) r))});
		\draw [blue, dotted, domain=10*pi:11.5*pi, samples=50] 
		plot ({(23/(\x*ln(\x)))*(cos((\x) r))}, {(23/(\x*ln(\x)))*(sin((\x) r))});

		\draw [red,  domain=2*pi:10*pi, samples=500] 
		plot ({(((23/(\x*ln(\x)))+(23/((\x+2*pi)*(ln(\x+2*pi)))))/2)*(cos((\x) r))}, {(((23/(\x*ln(\x)))+(23/((\x+2*pi)*(ln(\x+2*pi)))))/2)*(sin((\x) r))});
		\draw [red, dotted, domain=10*pi:11.5*pi, samples=50] 
		plot ({(((23/(\x*ln(\x)))+(23/((\x+2*pi)*(ln(\x+2*pi)))))/2)*(cos((\x) r))}, {(((23/(\x*ln(\x)))+(23/((\x+2*pi)*(ln(\x+2*pi)))))/2)*(sin((\x) r))}); 
		
		\node at (1.8,0)[red, anchor=north] {$r=h(\theta)$};
		\node at (2,0.3)[blue, anchor=west] {$r=g(\theta)$};
		
	\end{tikzpicture}
	\caption{The two spirals $r = g(\theta)$ and $r = h(\theta)$, with the domain $\Omega$ highlighted in gray.}\label{fig:double_spiral}
\end{figure}
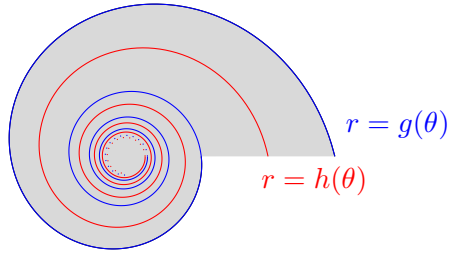

We parametrize $\Omega$ in the following way: let
\[
U = \{ (r, \theta) \in \mathbb{R}^2 : \theta \geq \theta_0, g(\theta+2\pi) \leq r < g(\theta)\},
\]
in which case the map $(r, \theta) \mapsto r e^{i\theta}$ maps $U$ bijectively to $\Omega \setminus \{0\}$. Let $\alpha \in (0, 1]$. We define $f \colon \Omega \setminus \{0\} \to \C$ on the two regions between the spirals $r = g(\theta)$ and $r = h(\theta)$ in terms of polar coordinates $(r, \theta) \in U$ : when $h(\theta) \leq r < g(\theta)$, we define 
\[
	f(r, \theta) = \varphi(r) - i \log \log \theta,
\]
where $\varphi(r) \colon [0, r_0] \to \R$ is an increasing absolutely continuous function to be fixed later, with $r_0 > \theta_0^{-1} \log^{-1} \theta_0$. In the other region where $g(\theta + 2\pi) \leq r < h(\theta)$, we instead define
\[
	f(r, \theta) = \varphi(r) - i \log W \left( \frac{1}{2r - g(\theta + 2\pi)}\right).
\]
This defines $f$ on all of $\Omega \setminus \{0\}$.

We briefly verify that $f$ is indeed continuous on $\Omega \setminus \{0\}$. If $r = h(\theta)$, then
\begin{multline*}
	\log W \left( \frac{1}{2r - g(\theta + 2\pi)}\right)
	= \log W \left( 
		\frac{1}{2h(\theta) - g(\theta + 2\pi)}\right)\\
	= \log W \left( \frac{1}{g(\theta)}\right)
	= \log W(\theta \log \theta)
	= \log \log \theta,
\end{multline*}
which verifies that $f$ is continuous on the spiral $r = h(\theta)$. On the other hand, if $r = g(\theta+2\pi)$, then
\begin{multline*}
	\log W \left( \frac{1}{2r - g(\theta + 2\pi)}\right)
	= \log W \left( 
	\frac{1}{g(\theta + 2\pi)}\right)\\
	= \log W((\theta + 2\pi) \log (\theta+ 2\pi))
	= \log \log (\theta + 2\pi),
\end{multline*}
which verifies continuity of $f$ on the spiral $r = g(\theta)$. Hence, $f$ is continuous on $\Omega \setminus \{0\}$.

\subsection{The first region}

We then compute $\abs{Df}$ and $J_f$ in the region $B \subset \Omega$ where $h(\theta) < r < g(\theta)$ in terms of our polar coordinate parametrization. In polar coordinates, the derivative matrix of $f$ becomes
\[
\begin{bmatrix}
	\partial_r \Re(f) & r^{-1} \partial_\theta \Re(f)\\
	\partial_r \Im(f) & r^{-1} \partial_\theta \Im(f)
\end{bmatrix}
=
\begin{bmatrix}
	\varphi'(r) & 0\\
	0 & -r^{-1} \theta^{-1} \log^{-1} \theta
\end{bmatrix}
=
\begin{bmatrix}
	\varphi'(r) & 0\\
	0 & -r^{-1} g(\theta)
\end{bmatrix}.
\]
Since we moreover have $r \geq h(\theta) = (g(\theta) + g(\theta + 2\pi))/2 \geq g(\theta)/2$, we obtain the upper bound
\[
	r^{-1} g(\theta) \leq 2.
\]
Hence, we have the estimate
\[
	\abs{Df(r, \theta)}^2 \leq 4 + (\varphi'(r))^2.
\]
Note especially that $\abs{Df}$ is bounded in $B$ when $\varphi$ is Lipschitz. Moreover, we have $\abs{Df} \in L^2(B)$ as long as $r (\varphi'(r))^2 \in L^1([0, r_0])$.

On the other hand, $J_f(r, \theta) = -r \varphi'(r) g(\theta)$, which is negative since $\varphi(r)$ is increasing. Furthermore, $-J_f(r, \theta)$ is bounded from above by $2 \varphi'(r)$.  Hence, in order to achieve the desired condition $\abs{Df}^2 + K \abs{J_f} \leq \Sigma$, we arrive at the following valid choices for $\Sigma$ and $K$:
\begin{equation}\label{eq:spiral_first_region}
	\Sigma(r, \theta) = 6 + 3(\varphi'(r))^2, \qquad \qquad K(r, \theta) = \max (\varphi'(r), 1).
\end{equation}

\subsection{The second region}

Next, we consider the region $A \subset \Omega$ where we have $g(\theta + 2\pi) \leq r < h(\theta)$ in terms of our polar coordinate parametrization. We still have $\partial_r \Re(f) = \varphi'(r)$ and $\partial_\theta \Re(f) = 0$, which are square integrable whenever $r (\varphi'(r))^2 \in L^1([0, r_0])$. The next step is then to compute $\partial_r \Im(f)$ and $\partial_\theta \Im(f)$. We use the shorthands 
\begin{align*}
	\tau &= \theta + 2\pi, &  u &= (2r - g(\tau))^{-1}.
\end{align*}
For $\partial_r \Im(f)$, we have $\partial_r u = -2u^2$, and hence
\[
	\partial_r \left( -\log W (u) \right)
	= -\frac{1}{W(u)} \frac{W(u)}{u(1 + W(u))} \left( - 2u^2  \right)
	= \frac{2u}{1 + W(u)}.
\]
For $\partial_\theta \Im(f)$, we have $\partial_\theta u = -u^2 \cdot (-g'(\tau)) = - u^2(1 + \log(\tau))/(\tau^2 \log^2 \tau)$, and hence
\[
	\partial_\theta \left( -\log W (u) \right) = \frac{(1 + \log \tau) u}{(1 + W(u))\tau^2 \log^2 \tau}.
\]
We hence arrive at the derivative matrix
\[
\begin{bmatrix}
	\partial_r \Re(f) & r^{-1} \partial_\theta \Re(f)\\
	\partial_r \Im(f) & r^{-1} \partial_\theta \Im(f)
\end{bmatrix}
=
\begin{bmatrix}
	\varphi'(r) & 0\\
	\dfrac{2u}{1 + W(u)} & \dfrac{(1 + \log \tau) u}{r (1 + W(u))\tau^2 \log^2 \tau}
\end{bmatrix}.
\]

To estimate these derivatives, we note that $g(\tau) \leq 2r - g(\tau) \leq g(\theta)$ in our region. Inverting all terms, it follows that $\theta \log \theta \leq u \leq \tau \log \tau$. Since $W$ is increasing and $W(t \log t) = \log t$, we hence have $\log \theta \leq W(u) \leq \log \tau$. Moreover, recalling the notation from the beginning of Section \ref{subsect:cusp_proofs}, it is reasonably easy to see that $\log(\theta) \approx \log(\tau)$ and $g(\theta) \approx g(\tau)$ for $\theta \in [\theta_0, \infty)$. In particular, we have
\begin{align*}
	u &\approx \theta \log \theta && \text{and}&
	W(u) &\approx \log \theta.
\end{align*}

We can then estimate $\abs{\partial_r \Im f(r, \theta)}$ from both sides by
\[
	\frac{2\theta \log \theta}{1 + \log \tau}
	\leq \abs{\partial_r \Im f(r, \theta)} 
	\leq \frac{2\tau \log \tau}{1 + \log \theta},
\]
implying that
\begin{equation}\label{eq:partial_r_estimate}
	\abs{\partial_r \Im f(r, \theta)}
	\approx \theta.
\end{equation}
To bound $r^{-1} \partial_\theta \Im(f)$, we first use the above estimates to obtain
\begin{equation*}
	\frac{1}{\theta \log \theta} \lesssim 
	\frac{\theta \log \theta}{\tau^2 \log^2 \tau}
	\leq \partial_\theta \Im f(r, \theta)
	\leq \frac{1 + \log \tau}{\tau \log \tau (1 + \log \theta)}
	\lesssim \frac{1}{ \theta \log \theta}.
\end{equation*}
That is, $\partial_\theta \Im f(r, \theta) \approx g(\theta)$. Then, since $g(\tau) \leq r < h(\theta) \leq g(\theta)$ in our domain, and since $g(\tau) \approx g(\theta)$, we in fact have $r \approx g(\theta)$. Hence,
\begin{equation}\label{eq:partial_theta_estimate}
	\frac{\partial_\theta \Im f(r, \theta)}{r} \approx 1.
\end{equation}
In particular, the function $r^{-1} \partial_\theta \Im(f)$ is bounded and hence clearly square integrable over $A$. 

Next, we check the square integrability of $\partial_r \Im(f)$. We begin by investigating the integral of an arbitrary function of $\theta$ over $A$ under our chosen parametrization. Letting $F \colon [\theta_0, \infty) \to [0, \infty)$, we use polar integration to get
\[
	\int_{A} F(\theta) = \int_{\theta_0}^\infty \int_{g(\tau)}^{h(\theta)} F(\theta) r \dd r \dd \theta
	\leq \int_{\theta_0}^\infty (h(\theta) - g(\tau)) \frac{F(\theta)}{\theta \log \theta} \dd \theta.
\]
Moreover, we have
\begin{multline*}
	h(\theta) - g(\tau) = \frac{g(\theta) + g(\theta + 2\pi)}{2} - g(\theta + 2\pi) = \frac{g(\theta) - g(\theta + 2\pi)}{2}\\
	= \frac{1}{2} \left( \frac{1}{ \theta \log \theta} - \frac{1}{(\theta + 2\pi) \log (\theta + 2\pi)} \right)
	= \frac{(\theta + 2\pi) \log (\theta + 2\pi) - \theta \log \theta}{2 \theta (\theta + 2\pi) \log \theta \log (\theta + 2\pi)}\\
	= \frac{2\pi \log (\theta + 2\pi) + \theta \log (1 + 2\pi/\theta)}{2 \theta (\theta + 2\pi) \log \theta \log (\theta + 2\pi)} \lesssim \frac{1}{\theta^2 \log \theta}.
\end{multline*}
Note in particular that in the last step of the above computation, we have $\theta \log(1 + 2\pi/\theta) = \log((1 + 2\pi/\theta)^\theta) \to \log \exp(2\pi) = 2\pi$ as $\theta \to \infty$, so hence the dominant term in the numerator is $2\pi \log (\theta + 2\pi)$. We thus finish our estimate as follows:
\begin{equation}\label{eq:polar_computation}
	\int_{A} F(\theta)
	\leq \int_{\theta_0}^\infty (h(\theta) - g(\tau)) \frac{F(\theta)}{\theta \log \theta} \dd \theta
	\lesssim \int_{\theta_0}^\infty \frac{F(\theta)}{\theta^3 \log^2 \theta}.
\end{equation}
Now, since $\abs{\partial_r \Im(f)}^2 \lesssim \theta^2$ by \eqref{eq:partial_r_estimate}, we conclude that $\abs{\partial_r \Im(f)} \in L^2(A)$ by taking $F(\theta) = \theta^2$ in \eqref{eq:polar_computation}, and observing that the resulting integrand $\theta^{-1} \log^{-2} \theta$ has a finite integral. We thus conclude that if $r(\varphi'(r))^2 \in L^1([0, r_0])$, then $f \in W^{1,2}(\Omega \setminus \{0\}, \mathbb{C})$, and consequently $f \in W^{1,2}(\Omega, \mathbb{C})$ by removability of isolated points for planar $W^{1,2}$-spaces.

It remains to find suitable choices of $K$ and $\Sigma$. We choose $\Sigma \equiv 0$ in this region, in which case we require $K \geq \abs{Df}^2/J_f$. By \eqref{eq:partial_r_estimate} and \eqref{eq:partial_theta_estimate}, we have
\[
	\abs{Df(r, \theta)}^2 \lesssim (\varphi'(r))^2 + \theta^2. 
\]
On the other hand, we have
\[
	J_f(r, \theta) \gtrsim \varphi'(r).
\]
Consequently, we may choose $\Sigma$ and $K$ so that
\begin{equation}\label{eq:spiral_second_region_choices}
	\Sigma(r, \theta) = 0, \qquad \qquad K(r, \theta) \approx \varphi'(r) + \frac{\theta^2}{\varphi'(r)} + 1. 
\end{equation}

\subsection{The results}

It remains now to state our choices of $\varphi$ and the resulting counterexamples. We begin with Theorem \ref{thm:discontinuity_bounded_sigma}, recalling first its statement.

\begin{customthm}{\ref{thm:discontinuity_bounded_sigma}} 
	There exist a domain $\Omega \subset \mathbb R^2$ and a Sobolev map $f\in W^{1,2} (\Omega , \mathbb R^2)$ such that $0\in \Omega$, $f \in C (\Omega \setminus \{0\}, \mathbb R^2)$, $\lim_{x \to \infty} \abs{f(x)}=\infty$, and $Df \in \mathcal M_2(K, \Sigma)$ with
	\[
		\Sigma \in L^\infty (\Omega ) 
		\qquad \text{and} \qquad K\in L^{1}(\Omega)\,.  
	\]
\end{customthm}
\begin{proof}
	We use the above construction with $\theta_0 = 2\pi$, $r_0 = 1$, and
	\[
		\varphi(r) = r.
	\]
	Notably, $\varphi$ is Lipschitz, and consequently the resulting map $f$ is Lipschitz in $B$. This choice indeed satisfies $r (\varphi'(r))^2 = r \in L^1([0,1])$, so $\abs{Df} \in L^2(\Omega \setminus \{0\})$. Moreover, by \eqref{eq:spiral_first_region}, both $\Sigma$ and $K$ are constant in the region $B$. Since $\Sigma \equiv 0$ in the other region $A$, we have $\Sigma \in L^\infty(\Omega)$. For $K \in L^1(\Omega)$, since $\varphi'$ is constant, it suffices by  \eqref{eq:spiral_second_region_choices} to show that
	\[
		\int_{A} \theta^2 < \infty.
	\]
	But this is true by \eqref{eq:polar_computation} with yet again $F(\theta) = \theta^2$. Finally, as $x \to 0$, the imaginary part of $f(x)$ clearly tends to infinity.
\end{proof}

The remaining result to prove is Theorem \ref{thm:spiral_ex_version_2}.

\begin{proof}[Proof of Theorem~\ref{thm:spiral_ex_version_2}]
	The case $q = \infty$ is exactly the result of Theorem \ref{thm:discontinuity_bounded_sigma}. Hence, we may assume that $q \in [1, \infty)$.
	
	We use the above construction, this time with the choice
	\[
		\varphi(r) = \int_0^r t^{2p^{-1} - 2} \log^{-7/4 + p^{-1}} t^{-1} \dd t.
	\]
	Note that by $p \in [1, 2]$, we have $2p^{-1} - 2 \geq -1$. Moreover, the case $2p^{-1} - 2 = -1$ corresponds to $p = 2$, in which case $-7/4 + p^{-1} = -5/4 < -1$. Hence, the integral used to define $\varphi(r)$ is finite for all $r > 0$ small enough, and we may hence choose $r_0$ and $\theta_0$ so that $\varphi(r)$ is a finite-valued increasing function on $[0, r_0]$. By our choice of $\varphi$, we have
	\begin{equation}\label{eq:gen_spiral_phi_choice}
		\varphi'(r) = r^{2p^{-1} - 2} \log^{-7/4 + p^{-1}} r^{-1}.
	\end{equation}
	
	We first determine the degree of integrability of $\varphi'(\abs{x})$ over $\Omega$, as this is used for many parts in the verification that our example is as desired. Indeed, if $s \in [1, \infty)$, we have by \eqref{eq:gen_spiral_phi_choice} that
	\begin{equation}\label{eq:phi_deriv_degree_of_int}
		\int_{\Omega} \left( \varphi'(\abs{x}) \right)^s \lesssim \int_0^{r_0} \frac{\dd r}{r^{2s - 2sp^{-1} - 1} \log^{7s/4 - sp^{-1}} r^{-1}} 
	\end{equation}
	This integral is finite if $2s - 2sp^{-1} - 1 \leq 1$, which is equivalent to $p^{-1} + s^{-1} \geq 1$. Note that in the extremal case $p^{-1} + s^{-1} = 1$, the finiteness of the integral also requires that $7s/4 - sp^{-1} > 1$; however, this condition rearranges to $p^{-1} + s^{-1} < 7/4$, which holds in the extremal case since $p^{-1} + s^{-1} = 1$.
	
	We have $\Sigma(x) \leq 6 + 3(\varphi'(\abs{x}))^2$ and $\Sigma(x)/K(x) \leq 6 + 3 \varphi'(\abs{x})$ in $B$ by \eqref{eq:spiral_first_region}, and we also have $\Sigma = \Sigma/K \equiv 0$ in $A$. Hence, \eqref{eq:phi_deriv_degree_of_int} with $s = q$ yields that $\Sigma/K \in L^q(\Omega)$ and $\Sigma \in L^{q/2}(\Omega)$ if $p^{-1} + q^{-1} \geq 1$. Moreover, $\abs{Df} \in L^2(\Omega)$ was shown to be equivalent with $r (\varphi'(r))^2 \in L^1([0, r_0])$: referring to \eqref{eq:phi_deriv_degree_of_int} with $s = 2$, this is true if $p^{-1} + 2^{-1} \geq 1$, which holds due to our assumption that $p \leq 2$. As our last application of \eqref{eq:phi_deriv_degree_of_int}, we have by \eqref{eq:spiral_first_region} that $K \in L^p(B)$ if $\varphi'(r) \in L^p(B)$: this is true if $2p \leq 4$, which again holds by our assumption that $p \leq 2$. 
	
	It remains to show that $K \in L^p(A)$. For this, it suffices by \eqref{eq:spiral_second_region_choices} to show the $L^p$-integrability of $\varphi'(r)$ and $\theta^2/\varphi'(r)$ over $A$. Since $K(r, \theta) \geq \varphi'(r)$ in $B$, the $\varphi'(r)$-term is covered by the same argument as used previously for $K \in L^p(B)$. For the other term, we again use \eqref{eq:polar_computation}. Indeed, we have
	\begin{multline*}
		\left(\frac{\theta^2}{\varphi'(r)}\right)^p = \frac{\theta^{2p} r^{2p - 2}}{\log^{-7p/4 + 1} r^{-1}}\\
		\leq \frac{\theta^{2p} (\theta \log \theta)^{2 - 2p}}{ \log^{-7p/4 + 1} ((\theta + 2\pi)\log(\theta + 2\pi))}
		\lesssim \frac{\theta^2}{\log^{p/4 - 1}(\theta)}
	\end{multline*}
	for all $\theta \in [\theta_0, \infty)$. Selecting $F(\theta) = \theta^2 \log^{1 - p/4} (\theta)$, the resulting integrand $\theta^{-1} \log^{-1 - p/4}(\theta)$ in \eqref{eq:polar_computation} is integrable whenever $-1 - p/4 < -1$, which is clearly true. We conclude that $K \in L^p(\Omega)$, completing the proof.
\end{proof}


\bibliographystyle{abbrv}
\bibliography{sources}

\end{document}